\DeclareMathOperator{\id}{id}
\DeclareMathOperator{\Mat}{Mat}
\DeclareMathOperator{\G}{G}
\DeclareMathOperator{\q}{q}
\DeclareMathOperator{\spa}{span}
\DeclareMathOperator{\GL}{GL}
\DeclareMathOperator{\Sp}{Sp}
\DeclareMathOperator{\diag}{Diag}
\DeclareMathOperator{\Orth}{O}
\DeclareMathOperator{\rad}{rad}
\newtheorem{thm}[subsection]{Theorem}
\newtheorem*{thm*}{Theorem}
\newtheorem{prop}[subsection]{Proposition}
\newtheorem*{prop*}{Proposition}
\newtheorem{lemma}[subsection]{Lemma}
\newtheorem*{lemma*}{Lemma}
\newtheorem{corollary}[subsection]{Corollary}
\newtheorem*{corollary*}{Corollary}
\newtheorem{definition}[subsection]{Definition}
\title{On involutions of type $\Orth(\q,k)$ over a field of characteristic two}
\author{Mark Hunnell \\ \textit{Winston-Salem State University} \\ hunnellm@wssu.edu \\ \\
John Hutchens \\ \textit{Winston-Salem State University} \\ hutchensjd@wssu.edu \\  \\ Nathaniel Schwartz \\ \textit{Baltimore, MD}}
\begin{document}
\maketitle

\vfill
\noindent Declarations of interest: none \\
This research did not receive any specific grant from funding agencies in the public, commercial, or not-for-profit sectors.\\
Mathematics Subject Classification: 20G15\\
Keywords: orthogonal groups; quadratic forms; involutions; algebraic groups; characteristic 2
\newpage

\section{Introduction}

In this article we study the involutions of orthogonal groups over fields of characteristic $2$.  Throughout the paper $k$ denotes a field.  An understanding of these involutions is beneficial to furthering the study of symmetric $k$-varieties, a generalization of symmetric spaces, to fields of characteristic 2.  Symmetric spaces were first studied by Gantmacher in \cite{ga39} in order to classify simple real Lie groups.  In \cite{be57} Berger provides a complete classification of symmetric spaces for simple real Lie algebras.   The primary motivation is to extend Helminck's \cite{he00} study of $k$-involutions and symmetric $k$-varieties to include fields of characterstic $2$.  This has been studied for groups of type $\G_2$ and $\mathrm{A}_n$ in \cite{hs18, sc18} and over fields of characteristic not $2$ in \cite{do06, bhw15, he02, bdhw16, hu14, hu15,hu16}.  We also extend the results of Aschbacher and Seitz \cite{as76} who studied similar structures for finite fields of characteristic $2$.  

The study of involutions gives us a way to describe generalized symmetric spaces or symmetric $k$-varieties of the form $G(k)/H(k)$ where $G(k)$ is an algebraic group over $k$ and $H(k)$ is the fixed point group of some automorphism of order $2$ on $G(k)$.  The notation for the theory of algebraic groups is standard and introduced as needed.  We use Hoffman and Laghribi's \cite{hl04} almost exclusively for notation concerning quadratic forms over fields of characteristic $2$. 

There have been many studies of orthogonal groups over fields of characteristic $2$.  In \cite{ch66} Cheng Hao discusses automorphisms of the orthogonal group over perfect fields of characteristic $2$ when the quadratic form is nondefective.  Pollak discusses orthogonal groups over global fields of characteristic $2$ in the case the quadratic form is nondefective in \cite{po70} and Connors writes about automorphism groups of orthogonal groups over fields of characteristic $2$ in \cite{co73,co74,co75,co76} for a nondegenerate quadratic form.   We extend these results by including discussions of defective and degenerate quadratic forms.

We also extend the results of Wiitala from \cite{wi78}.  The following result appears as Theorem A in \cite{wi78}, where
\[ \tau_{u}(w) = w + \dfrac{B(u,w)}{\q(u)} u. \]

\begin{thm}
\label{wiitala}
Let $\q$ be a quadratic form on a vector space $V$ over a field $k$ of characteristic $2$ such that $\rad(V)$ is empty with respect to $\q$.  If $\tau \in \Orth(\q,k)$, then $\tau$ is an involution if and only if $\tau = \tau_{l} \cdots \tau_{2}\tau_{1}$ and
\begin{enumerate}
\item $\tau_i=\tau_{u_i}$ is a transvection with respect to $u_i$ for all $i$, or 
\item each $\tau_{i}$ is an involution with respect to a hyperbolic space.
\end{enumerate}
\end{thm}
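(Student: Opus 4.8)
The plan is to prove the equivalence by treating the construction of the factorization (the forward implication) as the main content. For the reverse implication I would first record the conjugation formula $g\,\tau_u\,g^{-1}=\tau_{gu}$, valid for every $g\in\Orth(\q,k)$; it shows that the transvections produced below, whose defining vectors are fixed by $\tau$, pairwise commute. Since each $\tau_{u_i}$ is an involution (immediate from $B(u_i,u_i)=0$), a commuting product of transvections is an involution, and the identical bookkeeping shows that a product of hyperbolic involutions supported on mutually orthogonal hyperbolic subspaces is an involution.

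For the forward implication I would set $N=\tau-\id$ and use that in characteristic $2$ an involution satisfies $(\tau-\id)^2=\tau^2-\id=0$; hence $N^2=0$ and $R:=\im N\subseteq\ker N=:F$. Writing $B$ for the polar form of $\q$ and invoking the hypothesis that $\rad(V)$ is trivial to guarantee $B$ is nondegenerate, I would expand the isometry condition $B(\tau x,\tau y)=B(x,y)$ and combine it with $N^2=0$ to show that $N$ is self-adjoint for $B$. From this I extract the three structural facts $B(R,R)=0$, $F=R^{\perp}$, and $\q(Nx)=B(x,Nx)$ for all $x$. The first makes $\q|_{R}$ additive with $\q(cr)=c^{2}\q(r)$, so its zero locus $R_{0}$ is a subspace; the dichotomy of the theorem is then precisely whether $R=R_{0}$ ($R$ totally singular) or $R\neq R_{0}$.

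In the case $R\neq R_{0}$ I would first prove that $R$ is spanned by its anisotropic vectors: if $r_{0}$ is singular and $r_{1}$ anisotropic then $r_{0}=(r_{0}+r_{1})+r_{1}$ and $B(R,R)=0$ forces $r_{0}+r_{1}$ to be anisotropic. Choosing an anisotropic $u\in R$ and $x_{0}$ with $Nx_{0}=u$, the identity $\q(Nx)=B(x,Nx)$ gives $B(u,x_{0})=\q(u)$; together with $F=R^{\perp}$ this shows $\tau_{u}\tau$ fixes all of $F$ and also fixes $x_{0}\notin F$. Because $\tau u=u$ the transvection $\tau_{u}$ commutes with $\tau$, so $\sigma:=\tau_{u}\tau$ is again an involution, its residual space is strictly contained in $R$, and $\tau=\tau_{u}\sigma$. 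An induction on $\dim R$ then writes $\tau$ as a product of transvections.

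When $R$ is totally singular I would use the triviality of $\rad(V)$ to choose, by Witt's theorem, a complementary totally singular space $R'$ pairing with $R$ into an orthogonal sum of hyperbolic planes, with $V=(R\oplus R')\perp W$ and $\tau$ trivial on $R$ and on $W$; the preservation of $\q$ and $B$ then forces the map $R'\to R$ encoding $N$ to be given by an alternating matrix, and splitting that matrix into rank-two blocks exhibits $\tau$ as a commuting product of hyperbolic involutions, each supported on a four-dimensional hyperbolic subspace. The main obstacle I anticipate is keeping the case $R\neq R_{0}$ induction honest: I must ensure the residual space of $\sigma$ is itself not totally singular, so that the peeling terminates through transvections rather than stranding a nonzero totally singular residual. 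A computation of $\q$ on the new residual space shows this can fail only when $\q|_{R}$ has a one-dimensional anisotropic part, and reconciling these borderline involutions with the clean all-transvection versus all-hyperbolic alternative of the statement — using the parity of $\dim R$ (the Dickson invariant) and a careful choice of the vector $u$ to be peeled — is the delicate step; the totally singular case, by contrast, is routine once the Witt extension and the normal form for alternating matrices are available.
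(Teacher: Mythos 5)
You should first note that the paper contains no proof of this statement: it is quoted verbatim as Theorem A of \cite{wi78} (whose own argument is the Jordan-form analysis advertised in its title), so your attempt can only be measured against Wiitala's method and the O'Meara residual-space formalism the paper imports as Theorem \ref{omeara}. Measured that way, your skeleton is the right one and most of it is correct: $N=\tau+\id$ satisfies $N^2=0$; expanding $B(\tau x,\tau y)=B(x,y)$ and using $N^2=0$ does give self-adjointness of $N$, $B(R,R)=0$, $F=R^{\perp}$, and $\q(Nx)=B(x,Nx)$; the dichotomy $R=R_0$ versus $R\neq R_0$ is exactly hyperbolic versus diagonal in O'Meara's sense; and the totally singular case goes through as you say (the Gram matrix $B(r_i',Nr_j')$ is symmetric with zero diagonal, hence alternating, and after replacing each $f$ by $f+\q(f)e$ to make the complement $R'$ totally singular, the symplectic normal form splits $\tau$ into commuting basic null involutions). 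You are also right to quietly repair the literal statement in the converse: an arbitrary product of transvections need not be an involution ($\tau_u\tau_x$ has order $>2$ when $B(u,x)\neq 0$), and your restriction to commuting factors with mutually orthogonal inducing vectors is the intended reading.

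The genuine gap is the one you flag but do not fix: the induction in the diagonal case does not close, and the tools you name for closing it are the wrong ones. Concretely, suppose the nondegenerate symmetric form $\langle x,y\rangle := B(x,Ny)$ on $V/\ker N$ is, in some basis $\{y_1,y_2,y_3\}$, the form $\mathrm{diag}(1)\perp(\text{alternating plane})$; then $\dim R=3$ and $R_0$ has codimension $1$, but peeling the evident anisotropic $u=Ny_1$ leaves $\sigma$ with totally singular two-dimensional residual, i.e.\ a basic null involution, and you are stranded with a mixed product. Parity of $\dim R$ and the Dickson invariant cannot rescue this: $\dim R=3$ is odd while the stranded residual is even-dimensional, and the relevant dichotomy is alternating versus non-alternating of the restricted form, not parity. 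The missing ingredient is the characteristic-$2$ orthogonal-basis theorem: a nondegenerate symmetric bilinear form that is not alternating admits an orthogonal basis (when the orthogonal complement of an anisotropic vector $e$ turns alternating with hyperbolic pair $f,g$, recombine to $e+f$, $e+g$, $e+f+g$, which are pairwise orthogonal and anisotropic). Applying this to $\langle\,,\,\rangle$ on $V/\ker N$ and setting $v_i=Ny_i$ for an orthogonal basis $\{y_i\}$ yields at once $\tau=\tau_{v_1}\cdots\tau_{v_r}$ with mutually orthogonal anisotropic $v_i$ spanning $R$ --- no peeling and no induction at all; alternatively, keep your peeling and invoke the conversion fact the paper itself quotes from \cite{wi78} after Proposition \ref{conj_null}, that a transvection times a basic null involution is a product of three transvections, to rewrite any stranded mixed product. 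Without one of these two ingredients, your argument is incomplete at exactly the step you call delicate.
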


The author goes on to note that all such involutions of the same type and length are $\GL(V)$-conjugate.   These results are extended in this article to a vector space with nontrivial radical and the study of conjugacy classes under $\Orth(\q,k)$.

Our main results appear in section 3 and concern the characterization of conjugacy classes of involutions in a maximal nonsingular subspace and a characterization of what we call radical involutions.  We go on to discuss the general case and some special cases within.  We prove a characterization of $\Orth(\q,k)$-conjugacy for three types of involutions.  First in Theorem \ref{conj_transv} we show that two diagonal involutions $\tau_{u_l} \cdots \tau_{u_2} \tau_{u_1}$ and  $\tau_{x_l} \cdots \tau_{x_2} \tau_{x_1}$ are $\Orth(\q,k)$-conjugate if and only if a bilinear form induced by the norms of $u_1, u_2, \ldots$ and $u_l$ is equivalent to the bilinear form induced by the norms of $x_1, x_2, \ldots$ and $x_l$.  See Section $3$ for a more precise statement. Proposition \ref{conj_null} deals with involutions with respect to a hyperbolic space, which are also known as null involutions.  We show that two null involutions are $\Orth(\q,k)$-conjugate if and only if they have the same number of reduced factors.  Finally, radical involutions are described in Corollary \ref{conj_rad}, which establishes that all radical involutions satisfying a certain norm condition are conjugate.  The paper concludes with a discussion of the involutions in the case that $V$ is singular, but not totally singular.

\section{Preliminaries}
The following definitions can be found in \cite{hl04}.  Let $k$ be a field of characteristic 2 and $V$ a vector space defined over $k$.  We call $\q:V \rightarrow k$ a \emph{quadratic form} if it satisfies $\q(av) = a^2 \q(v)$ for all $a \in k$, $v \in V$and there exists a symmetric bilinear form $B: V \times V \rightarrow k$ such that $\q(w+w') = \q(w)+\q(w') + B(w,w')$ for all $w,w' \in V$.  Over fields of characteristic $2$ nonsingular symmetric bilinear forms are also symplectic.

The pair $(V,\q)$ is called a \emph{quadratic space}.
Given a quadratic form, there exists a basis of $V$, consisting of $e_i, f_i, g_j$, where $i \in \{1, 2, \hdots, r\}$ and $j \in \{1, 2, \hdots, s\}$ and field elements $a_i, b_i, c_j \in k$ such that 
\[\q(w) = \displaystyle \sum_{i=1}^r (a_i x_i^2 + x_iy_i + b_iy_i^2) + \sum_{j=1}^s c_j z_j^2\]
when $w = \sum_{i=1}^r (x_ie_i + y_if_i) + \sum_{j=1}^s z_j g_j$.  We denote this quadratic form by
\[\q = [a_1,b_1] \perp [a_2,b_2] \perp \cdots \perp [a_r,b_r]\perp \langle c_1, c_2, \hdots, c_s  \rangle \]
where $\rad(V)= \spa \{ g_1, g_2, \hdots, g_s\} $ is the \emph{radical} of $V$.  We say that such a quadratic form is of type $(r,s)$.  A nonzero vector $v \in V$ is an \emph{isotropic vector} if $\q(v)=0$, $V$ is an \emph{isotropic vector space} if it contains isotropic elements and \emph{anisotropic} otherwise.  The vector space $V$ is called \emph{nonsingular} if $s=0$, and is called \emph{nondefective} if $s=0$ or $\rad(V)$ is anisotropic.  A \emph{hyperbolic plane} has a quadratic form isometric to the form $[0,0]$ and will be denoted by $\mathbb{H}$.  We will call $\q'$ a \emph{subform} of $\q$ if there exists a form $\mathrm{p}$ such that $\q \cong \q' \perp \mathrm{p}$.  

Suppose $\mathcal{P} $ is a totally singular subspace of $V$ with basis $\{p_1, p_2, \dots, p_l\}$, then for $w=\sum_{i=1}^l w_ip_i$, $w^{\prime}=\sum_{i=1}^lw^{\prime}_ip_i$, and field elements $a_i \in k$, we will denote the diagonal bilinear form
\[ B(w,w') = a_1w_1w'_1 + a_2w_2w'_2 + \cdots + a_lw_lw'_l, \]
by $\langle a_1, a_2, \cdots, a_l \rangle_B$, following \cite{hl04}.   

We will denote $\mathbb{H}\perp \mathbb{H} \perp \dots \perp \mathbb{H}$, where there are $i$ copies of $\mathbb{H}$ in the decomposition, by $i \times \mathbb{H}$.  Similarly, $\langle 0, 0, \dots, 0 \rangle$, where the $0$ is repeated $j$ times, will be denoted $j\times \langle 0 \rangle$.
The following is Proposition 2.4 from \cite{hl04}. 

\begin{prop}\label{hofflaghdecomp}
Let $\q$ be a quadratic form over $k$.  There are integers $i$ and $j$ such that
\[ \q \cong i \times \mathbb{H} \perp \widetilde{\q_r} \perp \widetilde{\q_s} \perp j \times \langle 0 \rangle, \]
with $\widetilde{q_r}$ nonsingular, $\widetilde{\q_s}$ totally singular and $\widetilde{\q_r} \perp \widetilde{q_s}$ anisotropic.  The form $\widetilde{\q_r} \perp \widetilde{\q_s}$ is uniquely determined up to isometry.  In particular $i$ and $j$ are uniquely determined.  
\end{prop}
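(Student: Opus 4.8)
The plan is to prove existence by induction on $\dim V$, peeling off one orthogonal summand at each step, and then to pin down each constituent of the decomposition by an intrinsic invariant of $(V,\q)$. Throughout I use that in characteristic $2$ the bilinear form $B$ is alternating, so $\rad(V)$ is a genuine subspace and, since $B(r,w)=0$ for every $r\in\rad(V)$, any vector-space complement $W$ of $\rad(V)$ already gives an orthogonal splitting $\q=\q|_W\perp\q|_{\rad(V)}$. If $\q$ is anisotropic this is the whole story: $\q|_W$ is nonsingular and anisotropic while $\q|_{\rad(V)}$ is totally singular and anisotropic, so I take $\widetilde{\q_r}=\q|_W$, $\widetilde{\q_s}=\q|_{\rad(V)}$ and $i=j=0$. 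If $\q$ is isotropic I choose a nonzero isotropic $v$. When $v\notin\rad(V)$ there is $w$ with $B(v,w)\neq 0$; rescaling $w$ and replacing it by $w+\q(w)v$ arranges $\q(w)=0$ and $B(v,w)=1$, so $\spa\{v,w\}\cong\mathbb{H}$ is $B$-nonsingular and splits off as $\q\cong\mathbb{H}\perp\q'$. When $v\in\rad(V)$ then $\q(v+u)=\q(u)$ for all $u$, so $\spa\{v\}$ splits off as $\q\cong\langle 0\rangle\perp\q'$. In each case $\dim\q'<\dim\q$, so induction applies; collecting the copies of $\mathbb{H}$ and of $\langle 0\rangle$ and grouping the anisotropic remainder as in the anisotropic case yields the asserted form.

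For uniqueness I isolate intrinsic data. The radical $R=\rad(V)$ is intrinsic, and $R_0:=\{x\in R:\q(x)=0\}$ is a subspace (on $R$ the form is additive with $\q(ax)=a^2\q(x)$), so $j=\dim R_0$ is forced. Since $\q(v+x)=\q(v)$ whenever $x\in R_0$, the form descends to $\bar{\q}$ on $V/R_0$, and this descended form is nondefective: its bilinear radical is exactly $R/R_0$ and $\bar{\q}|_{R/R_0}\cong\widetilde{\q_s}$ is anisotropic. Thus uniqueness of $\widetilde{\q_s}$ reduces to uniqueness of the anisotropic part of the totally singular form $\q|_R$, which is governed by linear dependence over the field $k^2$ of squares. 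The remaining data $i$ and $\widetilde{\q_r}$ sit on a $B$-nonsingular complement of $R/R_0$ in $V/R_0$, where $\bar{\q}$ restricts to the nonsingular form $i\times\mathbb{H}\perp\widetilde{\q_r}$; for nonsingular forms the Witt index $i$ and the anisotropic part $\widetilde{\q_r}$ are determined by the usual Witt theory in characteristic $2$, and a dimension count then fixes $i$. Hence $\widetilde{\q_r}\perp\widetilde{\q_s}$ and the integers $i,j$ are all determined by $(V,\q)$.

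I expect the uniqueness half to be the main obstacle, and within it the separation of $\widetilde{\q_r}$ from $\widetilde{\q_s}$. The difficulty is purely characteristic $2$: a complement of the radical is not canonical and $\q$ does not descend to $V/R$ (it is nonzero on $R\setminus R_0$), so two complements $W,W'$ of $R/R_0$ give nonsingular forms differing by $\bar{\q}|_{W'}(w)=\bar{\q}|_W(w)+\bar{\q}(\phi(w))$ for some linear $\phi\colon W\to R/R_0$. Showing that the isometry class of $\widetilde{\q_r}$ is nonetheless independent of the complement is a Witt-type cancellation of the anisotropic totally singular form $\widetilde{\q_s}$, and such cancellation can genuinely fail for quasilinear forms in characteristic $2$; it is precisely the anisotropy of $\widetilde{\q_r}\perp\widetilde{\q_s}$ that rules out the cross-isotropies $\bar{\q}(w)=\bar{\q}(\phi(w))$ that would otherwise break the argument. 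Making this cancellation rigorous—combining the classical cancellation theorem for nonsingular forms with a $k^2$-linear-algebra analysis of the quasilinear part—is the step that requires the most care.
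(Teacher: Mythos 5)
Your existence argument is correct, and so are two of your three uniqueness reductions: $j=\dim\{x\in\rad(V)\mid \q(x)=0\}$ is intrinsic, and $\widetilde{\q_s}$ is the anisotropic part of the intrinsic quasilinear form $\q|_{\rad(V)}$, determined by $k^2$-linear algebra. The gap is the remaining step: your claim that the isometry class of $\widetilde{\q_r}$ is independent of the chosen complement is not merely delicate, it is \emph{false}, and the anisotropy of $\widetilde{\q_r}\perp\widetilde{\q_s}$ does not rescue it. Concretely, over $k=\mathbb{F}_2(s,t)$ take $\q=[1,s]\perp\langle t\rangle$ on $\spa\{e,f,g\}$ with $g\in\rad(V)$; a degree-in-$t$ parity argument shows $\q$ is anisotropic (a relation $x^2+xy+sy^2=tz^2$ with polynomial entries has even $t$-degree on the left, by anisotropy of $[1,s]$ over $\mathbb{F}_2(s)$, and odd on the right). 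The basis change $e\mapsto e+g$ is an isometry of $(V,\q)$ exhibiting $[1,s]\perp\langle t\rangle\cong[1+t,s]\perp\langle t\rangle$, yet $[1,s]\not\cong[1+t,s]$: their Arf invariants $s$ and $s+st$ differ, since $x^2+x=st$ forces $x\in\mathbb{F}_2(s)[t]$ and comparing $t$-degrees gives a contradiction, so $st\notin\wp(k)=\{x^2+x\mid x\in k\}$. Anisotropy of the total form does rule out the cross-isotropies $\bar{\q}(w)=\bar{\q}(\phi(w))$, but that only guarantees each complement's nonsingular part is again anisotropic; the twist by $\phi$ can still change its Arf invariant. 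This is exactly why the proposition asserts uniqueness only of the orthogonal sum $\widetilde{\q_r}\perp\widetilde{\q_s}$, never of $\widetilde{\q_r}$ alone; and since your route to uniqueness of the sum passes through the false uniqueness of the factors, the actual assertion of the proposition is left unproved.

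The repair is to handle the sum directly by two cancellation theorems rather than by splitting it. (Note the paper itself gives no proof: it quotes this as Proposition 2.4 of \cite{hl04}, where indeed it is remarked that $\widetilde{\q_r}$ is in general not unique.) Given two decompositions $i\times\mathbb{H}\perp\pi\perp j\times\langle 0\rangle\cong i'\times\mathbb{H}\perp\pi'\perp j'\times\langle 0\rangle$ with $\pi,\pi'$ anisotropic, first conclude $j=j'$ from your intrinsic description of the defect; then Lemma \ref{zero_cancel} (Lemma 2.6 of \cite{hl04}) strips the $\langle 0\rangle$'s, as both remainders are nondefective. Next invoke Witt cancellation for \emph{nonsingular} forms in characteristic $2$ --- which is valid even when the complementary summand is singular --- to cancel hyperbolic planes one at a time: if $i<i'$ this would make $\pi$ isotropic, so $i=i'$ and $\pi\cong\pi'$. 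Your $k^2$-analysis of the quasilinear part is still worth keeping, since it yields the true supplementary statement that $\widetilde{\q_s}$ (unlike $\widetilde{\q_r}$) is individually determined up to isometry.
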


We call $i$ the \emph{Witt index} and $j$ the \emph{defect} of $\q$.  If 
\[ \q \cong i \times \mathbb{H} \perp \widetilde{\q_r} \perp  j \times \langle 0 \rangle \perp \widetilde{\q_s}, \]
with respect to the basis 
\[ \{ e_1,f_1, \ldots e_i,f_i, \ldots , e_r,f_r, g_1, \ldots g_j, g_{j+1}, \ldots , g_{s} \}, \]
we will call 
\[ \mathrm{def}(V) = \mathrm{span}\{g_1,\ldots g_j \}, \]
the \emph{defect} of $V$.  

If $\mathcal{W}$ is a basis for a subspace $W$ of $V$, we will refer to the restriction of $\q$ to $W$ by $\q \vert_W$ or sometimes $\q_W$.

If $G$ is an algebraic group, then an automorphism $\theta: G \rightarrow G$ is an \emph{involution} if $\theta^2 = \id$, $\theta \neq \id$.  In addition, $\theta$ is a $k$-\emph{involution} if $\theta(G(k)) = G(k)$, where $G(k)$ denotes the $k$-rational points of $G$.  We define the {\em fixed point group} of $\theta$ in $G(k)$ by
\[ G(k)^{\theta} = \{ \gamma \in G(k) \ | \ \theta \gamma \theta^{-1} = \gamma \}. \]
This is often denoted $H(k)$ or $H_k$ in the literature when there is no ambiguity with respect to $\theta$.  Notice that since $\theta$ has order $2$, this group is also the centralizer of $\theta$ in $G(k)$.  We will use $k^*$ to denote the nonzero elements of $k$ and $k^2$ to denote the subfield of $k$ that consists of the squares of $k$.  When $k$ is a perfect field we have $k=k^2$.  An $l$-tuple of elements of the set $S$ will be denoted by $S^{\times l}$.

We often consider groups that leave a bilinear form or a quadratic form invariant.  If $B$ is a bilinear form on a nonsingular vector space $V$ we will denote the {\em symplectic group} of $(V,\q)$ by
\[ \Sp(B,k) = \{ \varphi \in \GL(V) \ | \ B(\varphi(w), \varphi(w')) = B(w,w') \text{ for } w,w' \in V \}. \]
The classification of involutions for $\Sp(B,k)$ for a field $k$ such that $\mathrm{char}(k) \neq 2$ has been studied in \cite{bhw15}.  For any quadratic space $V$ we will denote the {\em orthogonal group} of $(V,\q)$ by
\[ \Orth(\q,k) = \{  \varphi \in \GL(V) \ | \ \q(\varphi(w)) = \q(w) \text{ for } w \in V \}. \]
When $V$ is nonsingluar we have $\Orth(\q,k) \subset \Sp(B,k)$ if $B$ is the bilinear form that is associated with $\q$,
\[ B(w,w') = \q(w+w') + \q(w) + \q(w'). \]
We define $\textrm{BL}(B,k) = \{ \varphi \in \GL(V) \ | \ B(\varphi(w), \varphi(w') ) = B(w,w') \}$.  Notice that when $V$ is nonsingular $\textrm{BL}(B,k) \cong \Sp(B,k)$, and in general $\textrm{BL}(B,k) \supset \Orth(\q,k)$.  We have the isomorphism
\[ \mathrm{BL}(B,k) \cong (\Sp(B_{V_{\mathcal{B}}},k) \times \GL(\rad(V)))\ltimes \mathrm{Mat}_{2r,s}(k), \]
where $\dim_k(V_{\mathcal{B}}) = 2r$ and $V= V_{\mathcal{B}} \perp \rad(V)$. 

We will need to make use of some simple facts about quadratic spaces stated in the following lemmas.  The first outlines some standard isometries for quadratic forms over a field of characteristic $2$, and the second allows us to express $\q$ using a different completion of the nonsingular space.  These and more like them appear in \cite{hl04}.  

\begin{lemma}
Let $\q$ be a quadratic form on a vector space $V$, and suppose $\alpha \in k$.  Then the following are equivalent representations of $q$ on $V$:
\begin{enumerate}
\item $[a,b] = [a,a+b+1] = [b,a] = [ \alpha^2 a, \alpha^{-2} b] $
\item $[a,b] \perp [c,d] = [a+c,b] \perp [c,b+d] = [c,d] \perp [a,b]$
 \end{enumerate}
\end{lemma}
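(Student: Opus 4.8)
The plan is to prove each claimed equality of quadratic forms by exhibiting an explicit change of basis realizing the isometry, and then verifying the values of $\q$ on the new basis vectors together with the associated bilinear pairings. Throughout I realize a binary form $[a,b]$ on a plane spanned by $\{e,f\}$ with $\q(e) = a$, $\q(f) = b$, and $B(e,f) = 1$, and I repeatedly use the three basic identities available in characteristic $2$: $\q(\alpha v) = \alpha^2 \q(v)$, $\q(w + w') = \q(w) + \q(w') + B(w,w')$, and the fact that the associated bilinear form is symmetric and alternating, so that $B(v,v) = 0$ for every $v$. With these in hand, each verification reduces to a short computation.

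For part (1) I would treat the three relations in turn. The identity $[a,b] = [b,a]$ follows by swapping $e$ and $f$, since $B$ is symmetric. For $[a,b] = [a, a+b+1]$ I replace the second basis vector by $e + f$ and leave the first fixed: then $\q(e) = a$, $\q(e+f) = \q(e) + \q(f) + B(e,f) = a + b + 1$, and $B(e, e+f) = B(e,e) + B(e,f) = 1$, so the form is exactly $[a, a+b+1]$. Finally, assuming $\alpha \neq 0$ (as the expression $\alpha^{-2}$ requires), I rescale by $e \mapsto \alpha e$ and $f \mapsto \alpha^{-1} f$; this yields $\q(\alpha e) = \alpha^2 a$, $\q(\alpha^{-1} f) = \alpha^{-2} b$, and $B(\alpha e, \alpha^{-1} f) = B(e,f) = 1$, giving $[\alpha^2 a, \alpha^{-2} b]$.

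For part (2) the equality $[a,b] \perp [c,d] = [c,d] \perp [a,b]$ is immediate, as it merely reorders the orthogonal summands. For the remaining identity I work on the four-dimensional space with orthogonal blocks $\{e_1, f_1\}$ and $\{e_2, f_2\}$ realizing $[a,b]$ and $[c,d]$, and propose the change of basis
\[ e_1 \mapsto e_1 + e_2, \quad f_1 \mapsto f_1, \quad e_2 \mapsto e_2, \quad f_2 \mapsto f_1 + f_2. \]
A direct computation gives $\q(e_1 + e_2) = a + c$ and $\q(f_1 + f_2) = b + d$, the cross terms vanishing because the original blocks are orthogonal, while $\q(f_1) = b$ and $\q(e_2) = c$ are unchanged; thus the two new blocks carry the forms $[a+c, b]$ and $[c, b+d]$. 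The point I expect to require the most care is checking that these two new blocks are mutually orthogonal and each internally normalized, i.e. confirming the six pairings $B(e_1 + e_2, f_1) = 1$ and $B(e_2, f_1 + f_2) = 1$ (the internal cross terms), together with $B(e_1 + e_2, e_2)$, $B(f_1, e_2)$, $B(f_1, f_1 + f_2)$, and the genuinely mixed $B(e_1 + e_2, f_1 + f_2)$, all of which must vanish. The last of these expands to $B(e_1, f_1) + B(e_2, f_2) = 1 + 1 = 0$, so the cancellation that decouples the new blocks is precisely the characteristic-$2$ identity $1 + 1 = 0$; this is the only feature of the argument that genuinely uses $\mathrm{char}(k) = 2$, and it is the main thing to get right, with every other pairing vanishing by orthogonality of the original blocks.
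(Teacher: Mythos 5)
Your proof is correct: each claimed isometry is realized by an explicit invertible change of basis, and all the norm and pairing computations check out, including the key cancellation $B(e_1+e_2,f_1+f_2)=B(e_1,f_1)+B(e_2,f_2)=1+1=0$ (though note that the alternating property $B(v,v)=2\q(v)=0$, which you also use, is likewise a characteristic-$2$ fact, not just that one cancellation). The paper gives no proof of this lemma at all---it defers to \cite{hl04}---and your explicit basis-change verification is precisely the standard argument implicit in that reference, so your write-up is essentially the intended proof, made complete.
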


\begin{lemma} Let $c_i, c_i^{\prime}, d_i \in k$ for $1 \leq i \leq n$, and denote the subfield of squares in $k$ by $k^2$.  Suppose $\{c_1, ... , c_n \}$ and $\{c_1^{\prime}, ... , c_n^{\prime} \}$ span the same vector space over $k^2$ and $\q = [c_1,d_1] \perp \hdots \perp [c_n , d_n]$.  Then there exist $d_i^{\prime} \in k$, $1 \leq i \leq n$, such that $\q = [c_1^{\prime},d_1^{\prime}] \perp \hdots \perp [c_n^{\prime} , d_n^{\prime}]$.
\end{lemma}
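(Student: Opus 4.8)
The plan is to recognize that, since $\mathrm{char}(k)=2$, the set of squares $k^2$ is itself a subfield of $k$, so that $k$ becomes a vector space over $k^2$ and each of $c_1,\dots,c_n$ and $c_1',\dots,c_n'$ is a tuple of vectors in this $k^2$-space. The hypothesis says these two tuples span the same $k^2$-subspace $U\subseteq k$, and the conclusion asks us to pass from the first tuple of ``diagonal entries'' of $\q$ to the second by re-representing $\q$. First I would isolate the elementary moves on the tuple $(c_1,\dots,c_n)$ that can be realized by re-representing $\q$, using only the isometries recorded in the first of the two lemmas above.

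Concretely, I would show three operations are available. Permuting two blocks, $[a,b]\perp[c,d]=[c,d]\perp[a,b]$, transposes two entries $c_i$. Rescaling a single block, $[a,b]=[\alpha^2 a,\alpha^{-2}b]$ with $\alpha\in k^*$, multiplies one entry by a nonzero square, i.e. by an arbitrary nonzero element of $k^2$. Finally, combining the additive rule $[a,b]\perp[c,d]=[a+c,b]\perp[c,b+d]$ with two rescalings of the second block shows that $(c_i,c_j)\mapsto(c_i+\mu^2 c_j,\,c_j)$ for any $\mu\in k$; that is, we may add an arbitrary $k^2$-multiple of one entry to another while leaving all other $c$'s fixed (only the $d$'s are altered, which is harmless). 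These are exactly the elementary dilations and transvections, so the moves available to us generate the full action of $\GL_n(k^2)$ on the tuple $(c_1,\dots,c_n)$.

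It then remains to prove the purely linear-algebraic fact that some $M\in\GL_n(k^2)$ carries $(c_1,\dots,c_n)$ to $(c_1',\dots,c_n')$. Fixing a $k^2$-basis $b_1,\dots,b_m$ of $U$ and expressing each tuple in this basis produces two $m\times n$ matrices $C$ and $C'$ over $k^2$; the spanning hypothesis says both have rank $m$, i.e. both are surjective as maps $(k^2)^n\to (k^2)^m$. Any two surjective linear maps between the same finite-dimensional $k^2$-spaces differ by an automorphism of the source, so $C'=CM$ for some $M\in\GL_n(k^2)$, which is precisely the relation $(c_1',\dots,c_n')=(c_1,\dots,c_n)M$. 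Factoring $M$ into the elementary matrices above and performing the corresponding sequence of re-representations of $\q$ yields $\q=[c_1',d_1']\perp\cdots\perp[c_n',d_n']$ for suitable $d_i'$.

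I expect the main obstacle to be the bookkeeping in the second paragraph: verifying that the additive move produces a clean $k^2$-multiple with no side effects on the other diagonal entries, and that the restricted moves (scaling only by nonzero squares, adding only squares) still generate all of $\GL_n(k^2)$ rather than a proper subgroup. The surjectivity argument in the last paragraph is the conceptual core but is standard once the problem is transported into $k^2$-linear algebra; the only subtlety there is that the spanning sets may be redundant (i.e. $m<n$), which the surjective-map formulation handles uniformly without a separate case analysis.
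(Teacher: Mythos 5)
Your argument is correct, and in fact the paper never proves this lemma at all: it is stated as one of the ``simple facts about quadratic spaces'' and attributed to \cite{hl04}, so there is no in-paper proof to compare against. Your proposal supplies exactly the proof the citation is standing in for, and it is complete: the conjugated additive move checks out,
\[ [c_i,d_i]\perp[c_j,d_j] \;=\; [c_i,d_i]\perp[\mu^2 c_j,\mu^{-2}d_j] \;=\; [c_i+\mu^2 c_j,\,d_i]\perp[c_j,\,\mu^2 d_i+d_j], \]
so adding an arbitrary element of $k^2$ times $c_j$ to $c_i$ really does touch only $d_j$ otherwise; transvections together with the dilations $c_i\mapsto\alpha^2 c_i$ generate $\GL_n(k^2)$ by ordinary Gaussian elimination (transvections give $\SL_n$, one dilation supplies the determinant), so no proper subgroup issue arises; and your reduction of the spanning hypothesis to ``two surjections $(k^2)^n\to(k^2)^m$ differ by an automorphism of the source'' is the right uniform way to handle redundant spanning sets, including the degenerate cases where some $c_i=0$ or where $m<n$. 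The only stylistic remark is that one could avoid the explicit $\GL_n(k^2)$ factorization by inducting on $n$ directly with the same three moves (extract $c_1'$ as a $k^2$-combination of the $c_i$, then recurse on the complement), which is presumably closer to how \cite{hl04} argues; your matrix formulation is slightly heavier but makes the ``which moves are legal'' bookkeeping completely transparent.
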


\section{Nonsingular Involutions}

Now we study the isomorphism classes of involutions of $\Orth(\q,k)$ when $(V,\q)$ is nonsingular. Recall that in general $\Sp(B,k) \supset \Orth(\q,k)$ when $B$ is induced by $\q$ on $V$ and $V$ is nonsingular.  A \emph{symplectic transvection} with respect to $u \in V$ and $a \in k$ is  a map of the form
\[ \tau_{u,a}(w) = w + aB(u,w) u, \]
and such a map is an \emph{orthogonal transvection} if $\q(u) \neq 0$  and $a = \q(u)^{-1}$.  Notice that for a symplectic transvection $a$ is allowed to be zero, but $\tau_{u,0} = \id$.  The symplectic group is generated by symplectic transvections and the orthogonal group is generated by orthogonal transvections as long as $V$ is not of the from $V=\mathbb{H} \perp \mathbb{H}$ over $\mathbb{F}_2$ as pointed out in Theorem 14.16 in \cite{gr02}.  A \emph{symplectic involution} is a map of order $2$ in $\Sp(B,k)$.   

An involution $\sigma \in \Sp(B,k)$ is called \emph{hyperbolic} if $B(v,\sigma(v)) = 0$ for all $v \in V$, and \emph{diagonal} otherwise.  Observe that all nontrivial hyperbolic elements of $\Sp(B,k)$ are involutions.

If $\sigma \in \Sp(B,k)$, then we call $R_{\sigma}=(\sigma - \id_V)V$ the residual space of $\sigma$ and define $\mathrm{res } (\sigma) = \dim R_{\sigma}$.  Then the following comes from \cite{om78}:
\begin{thm}
\label{omeara}
Let $\sigma \in \Sp(B,k), \ \sigma^2=\id_V, \ \sigma \neq \id_V$.  Then: 
\begin{enumerate}
	\item If $\sigma$ is hyperbolic, then $\sigma$ is a product of $\mathrm{res} ( \sigma) + 1$, but not of $\mathrm{res} (\sigma)$, symplectic transvections. %2.1.18
	\item If $\sigma$ is diagonal, then $\sigma$ is a product of $\mathrm{res} ( \sigma)$, but not of $\mathrm{res} (\sigma) -1$, symplectic transvections. %2.1.18
	\item In either case, the vectors inducing transvections whose composition is $\sigma$ are mutually orthogonal. %1.3.13(2)
\end{enumerate}
\end{thm}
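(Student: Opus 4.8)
The plan is to exploit the characteristic-two identity $(\sigma-\id_V)^2=\sigma^2-\id_V=0$, which forces the residual space $R_\sigma=(\sigma-\id_V)V$ to lie inside the fixed space $F_\sigma=\ker(\sigma-\id_V)$. First I would record three structural facts by direct computation with $B(\sigma v,\sigma w)=B(v,w)$ and $\sigma=\sigma^{-1}$: (i) $R_\sigma$ is totally isotropic, since for $r=(\sigma-\id_V)v$ and $s=(\sigma-\id_V)w$ one gets $B(r,s)=B(\sigma v,w)+B(v,\sigma w)=0$; (ii) $F_\sigma=R_\sigma^{\perp}$, by a dimension count together with $B(f,(\sigma-\id_V)w)=0$ for $f\in F_\sigma$; and (iii) the assignment $\beta\big((\sigma-\id_V)v,(\sigma-\id_V)w\big)=B(v,(\sigma-\id_V)w)$ is a well-defined nondegenerate symmetric bilinear form on $R_\sigma$ whose diagonal $\phi(r)=\beta(r,r)$ satisfies $\phi\big((\sigma-\id_V)v\big)=B(v,\sigma v)$. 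Thus $\sigma$ is hyperbolic precisely when $\phi\equiv 0$, i.e.\ when $\beta$ is alternating, which (as $\beta$ is nondegenerate) forces $\dim R_\sigma=\mathrm{res}(\sigma)$ to be even.

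For the lower bounds I would use subadditivity of the residual space: if $\sigma=\tau_{u_1,a_1}\cdots\tau_{u_l,a_l}$ with each $a_i\neq 0$, then $R_\sigma\subseteq ku_1+\cdots+ku_l$, so $l\geq\mathrm{res}(\sigma)$. This already gives minimality in the diagonal case (2). In the hyperbolic case (1) I must rule out $l=\mathrm{res}(\sigma)$: the $u_i$ then form a basis of the totally isotropic space $R_\sigma$; extending to a symplectic basis with dual vectors $w_j$ ($B(u_i,w_j)=\delta_{ij}$) and using that $R_\sigma$ is totally isotropic, a short computation gives $(\sigma-\id_V)w_j=a_ju_j$, whence $\phi(a_ju_j)=a_j\neq 0$. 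This contradicts $\phi\equiv 0$, so a hyperbolic $\sigma$ needs at least $\mathrm{res}(\sigma)+1$ transvections.

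For existence of factorizations I would treat the diagonal case by a peeling induction on $\mathrm{res}(\sigma)$. Since $\beta$ is nondegenerate and non-alternating it admits an orthogonal basis $r_1,\dots,r_m$ with each $\phi(r_i)\neq 0$ (a standard fact for non-alternating symmetric forms in characteristic two). The key computation is that for $u\in R_\sigma$ with $\phi(u)\ne 0$ the transvection $\tau_{u,\phi(u)^{-1}}$ commutes with $\sigma$ (because $u\in R_\sigma\subseteq F_\sigma$), so $\sigma'=\tau_{u,\phi(u)^{-1}}\sigma$ is again an involution, and on $R_\sigma$ one has $(\sigma'-\id_V)=\big(\id_{R_\sigma}+\phi(u)^{-1}\beta(u,-)\,u\big)\circ(\sigma-\id_V)$, whose image is the $\beta$-orthogonal complement of $u$ in $R_\sigma$; hence $\mathrm{res}(\sigma')=\mathrm{res}(\sigma)-1$ and $\sigma'$ is diagonal with associated form $\beta|_{u^{\perp_\beta}}$. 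Peeling off $r_m,r_{m-1},\dots,r_1$ in turn (each remaining restriction of $\beta$ being diagonal with nonzero entries, hence non-alternating) reaches $\id_V$ after $m$ steps, so $\sigma=\tau_{r_m,\phi(r_m)^{-1}}\cdots\tau_{r_1,\phi(r_1)^{-1}}$ is a product of $\mathrm{res}(\sigma)$ transvections; since all $r_i$ lie in the totally isotropic space $R_\sigma$, the inducing vectors are mutually orthogonal, giving (3). The hyperbolic case then reduces to this: choosing any nonzero $u_0\in R_\sigma$ (so $\phi(u_0)=0$) and any $a_0\neq 0$, the element $\rho=\tau_{u_0,a_0}\sigma$ is an involution (as $u_0\in F_\sigma$) with $R_\rho=R_\sigma$, and $B(w,\rho w)=a_0B(u_0,w)^2$ shows $\rho$ is diagonal; applying the diagonal case to $\rho$ writes it as a product of $\mathrm{res}(\sigma)$ transvections with vectors in $R_\sigma$, so $\sigma=\tau_{u_0,a_0}\rho$ is a product of $\mathrm{res}(\sigma)+1$ mutually orthogonal transvections.

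The main obstacle I anticipate is the diagonal peeling step: verifying that $\tau_{u,\phi(u)^{-1}}\sigma$ is genuinely an involution whose residual space drops by exactly one and whose associated form is the honest restriction $\beta|_{u^{\perp_\beta}}$, so that the induction stays inside the non-alternating world and terminates correctly. Everything else—the totally isotropic nature of $R_\sigma$, the identification $F_\sigma=R_\sigma^{\perp}$, the subadditivity lower bound, and the one-transvection reduction from the hyperbolic to the diagonal case—is comparatively routine once $\beta$ and its diagonal $\phi$ are in hand; the orthogonality assertion (3) is then automatic, since in every minimal factorization the inducing vectors are forced to lie in the totally isotropic residual space $R_\sigma$.
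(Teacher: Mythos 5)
Your proof is correct, and it is essentially the argument behind the paper's source: the paper does not prove this theorem but quotes it from O'Meara \cite{om78}, and your construction reconstructs that treatment --- the form $\beta$ you put on $R_\sigma$ is the Wall form of the involution, the hyperbolic/diagonal dichotomy is exactly $\beta$ alternating or not, the lower bounds follow from subadditivity of residual spaces, and your peeling step checks out (for $u\in R_\sigma$ with $\phi(u)\neq 0$ one indeed has $B(u,w)=\beta\bigl(u,(\sigma+\id_V)w\bigr)$, so $r\mapsto r+\phi(u)^{-1}\beta(u,r)u$ kills $u$ and fixes $u^{\perp_\beta}$ pointwise, giving $\mathrm{res}(\sigma')=\mathrm{res}(\sigma)-1$ with induced form $\beta|_{u^{\perp_\beta}}$). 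The single fact you invoke without proof --- that a nondegenerate non-alternating symmetric bilinear form over a field of characteristic two admits an orthogonal basis, necessarily with nonzero diagonal --- is standard and available in \cite{gr02}, so nothing is missing.
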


Consider the symplectic involution of the form
\[ \tau_{u_l,a_l} \cdots \tau_{u_2,a_2} \tau_{u_1,a_1}. \]
If $a =[a_i] \in k^{\times l}$ and $\mathcal{U} = \{u_1,u_2,\ldots, u_l\}$, then we use $\tau_{\mathcal{U},a}$ to denote this map.
We may assume $\mathcal{U}$ consists of mutually orthogonal vectors in $V$, thus  $\spa \mathcal{U}$ is a singular subspace of $V$ with dimension  less than or equal to $l$.  A factorization of a transvection involution is called \emph{reduced} if it is written using the least number of factors, and the number of factors in a reduced expression is called the \emph{length} of the involution.  

\begin{lemma}
If $\sigma \in \Sp(B,k)$ is diagonal and we let $r=\mathrm{res} ( \sigma)$, then there exists a set $\mathcal{U}=\{ u_1, u_2, \dots, u_r \} $, where $B(u_i,u_j)=0$ for all $\{i,j\} \subset [l]$, and $a = [a_i] \in (k^*)^{\times r}$ such that $\mathcal{U}$ is a basis for $R_{\sigma}$ and $\sigma = \tau_{\mathcal{U},a}$.
\end{lemma}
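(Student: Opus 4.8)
The plan is to read the factorization straight off O'Meara's theorem and then verify the two assertions that go beyond it, namely that every scalar $a_i$ is nonzero and that the inducing vectors form a basis of the residual space. Since $\sigma$ is diagonal, part (2) of Theorem \ref{omeara} yields an expression $\sigma = \tau_{u_r,a_r}\cdots\tau_{u_2,a_2}\tau_{u_1,a_1}$ as a product of exactly $r = \mathrm{res}(\sigma)$ symplectic transvections, and part (3) guarantees that the vectors $u_1,\ldots,u_r$ are mutually orthogonal. This already supplies the set $\mathcal{U}=\{u_1,\ldots,u_r\}$ with $B(u_i,u_j)=0$ for all $\{i,j\}\subset[r]$, so the remaining work is to pin down the scalars and to identify $\spa \mathcal{U}$.

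First I would argue that each $a_i \in k^*$ (and each $u_i \neq 0$). A symplectic transvection $\tau_{u_i,a_i}$ equals $\id_V$ precisely when $a_i = 0$ or $u_i = 0$. If any factor were trivial, we could delete it and exhibit $\sigma$ as a product of fewer than $r$ symplectic transvections, contradicting the minimality asserted in part (2) of Theorem \ref{omeara}. Hence every factor is a nontrivial transvection, which gives $a = [a_i] \in (k^*)^{\times r}$.

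Next I would establish $R_\sigma = \spa \mathcal{U}$ by a telescoping identity followed by a dimension count. Setting $\sigma_0 = \id_V$ and $\sigma_i = \tau_{u_i,a_i}\sigma_{i-1}$, for any $w \in V$ one computes $\sigma_i(w)-\sigma_{i-1}(w) = a_i B(u_i,\sigma_{i-1}(w))\,u_i \in \spa\{u_i\}$, so summing over $i$ yields $\sigma(w)-w = \sigma_r(w)-\sigma_0(w) \in \spa\{u_1,\ldots,u_r\}$. Thus $R_\sigma \subseteq \spa \mathcal{U}$. Since $\dim R_\sigma = r$ and $\mathcal{U}$ has $r$ elements, this containment forces $\dim \spa \mathcal{U} = r$; therefore $u_1,\ldots,u_r$ are linearly independent and $\mathcal{U}$ is a basis for $R_\sigma$.

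I do not expect a serious obstacle, as the argument is essentially bookkeeping layered on top of Theorem \ref{omeara}. The one point demanding care is that the factorization produced by O'Meara's theorem is the reduced one, so that the minimality in part (2) may legitimately be invoked to exclude vanishing scalars; granting that, the telescoping identity and the dimension count are routine.
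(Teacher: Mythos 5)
Your proposal is correct and follows essentially the same route as the paper: invoke parts (2) and (3) of Theorem \ref{omeara} for a length-$r$ factorization with mutually orthogonal inducing vectors, then use $\dim R_\sigma = r$ to conclude $\mathcal{U}$ is a basis. You merely make explicit two details the paper's two-sentence proof leaves implicit --- the telescoping argument giving $R_\sigma \subseteq \spa\mathcal{U}$, and the minimality argument ruling out $a_i = 0$ --- both of which are sound and, if anything, make the write-up more complete than the original.
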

\begin{proof}
By \ref{omeara}, we know $\sigma$ is a product of $r$ transvections whose inducing vectors are mutually orthogonal.  $R_{\sigma}$ is the span of these vectors, and $r=\dim (R_{\sigma})$, therefore these vectors must be linearly independent.
\end{proof}

We want to know when two diagonal involutions of the same length are equal, and to that end we define the following relationship.  Consider a pairing consisting of a list of $l$ orthogonal vectors contained in a nonsingular vector space over a field of characteristic $2$ along with a vector in $(k^*)^{\times l}$, where $k^*$ denotes the nonzero elements of $k$.  This vector is our ordered list of $a_i$'s and we take the components in $k^*$, since we can assume we have a reduced diagonal involution of length $l$.  Let $\mathcal{U}$ be as above and let 
\[ \mathcal{X} = \{x_1,x_2,\ldots, x_l\}, \]
$a = (a_1,a_2, \ldots, a_l)$ and $b=(b_1,b_2,\ldots, b_l)$.  The pairing $(\mathcal{U},a)$ and $(\mathcal{X},b)$ is called \emph{involution compatible} if $\mathcal{U}$ and $\mathcal{X}$ span the same $l$-dimensional singular subspace of $V$ such that  $u_i = \sum \alpha_{ji} x_j$ and the following hold
\begin{align}
b_j &= \sum a_i \alpha_{ji}^2 \text{ and} \\ 
0 &= \sum a_i \alpha_{ji} \alpha _{ki} \text{ for all } \{j,k\} \subset [l].
\end{align}
Notice that this is equivalent to 
\[ [ \alpha_{ij} ]_{1\leq i,j \leq l}^T \diag(a_1,\ldots, a_l) [\alpha_{ij}]_{1\leq i,j \leq l} =  \diag(b_1,\ldots, b_l). \]
We can simplify the statement by setting $A=[\alpha_{ij}]_{1\leq i,j \leq l}$ and $\diag(a_1,\ldots, a_l) = [a_i]$
\begin{equation} \label{symp_inv_mat}
A^T[a_i] A = [b_i], 
\end{equation}
and we can see this is equivalent to 
\[ \langle a_1,a_2,\ldots, a_l \rangle_B \cong \langle b_1, b_2, \ldots, b_l \rangle_B, \]
an equivalence of bilinear forms.

\begin{thm}
Let $\tau_{\mathcal{U},a}$ and $\tau_{\mathcal{X},b}$ be diagonal involutions.
Then $\tau_{\mathcal{U},a}=\tau_{\mathcal{X},b}$ if and only if $(\mathcal{U},a)$ and $(\mathcal{X},b)$ are involution compatible.
\end{thm}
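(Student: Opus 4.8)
The plan is to reduce the composition of transvections to a single closed-form expression and then compare $\tau_{\mathcal{U},a}$ and $\tau_{\mathcal{X},b}$ as linear endomorphisms of $V$, reading off the compatibility relations by equating coefficients. The first step is to record the closed form of a diagonal involution. Because the vectors of $\mathcal{U}$ are mutually orthogonal, the transvections $\tau_{u_i,a_i}$ pairwise commute (the cross term in $\tau_{u_j,a_j}\tau_{u_i,a_i}(w)$ carries a factor $B(u_j,u_i)=0$), so an easy induction gives
\[ \tau_{\mathcal{U},a}(w) = w + \sum_{i=1}^{l} a_i\, B(u_i,w)\, u_i \qquad \text{for all } w \in V, \]
and similarly $\tau_{\mathcal{X},b}(w) = w + \sum_{i=1}^{l} b_i\, B(x_i,w)\, x_i$. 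This is the only place where orthogonality of the inducing vectors is essential, and it turns the problem into a bilinear identity. Note also that, by the preceding lemma, $\spa\mathcal{U}$ and $\spa\mathcal{X}$ are the residual spaces $R_{\tau_{\mathcal{U},a}}$ and $R_{\tau_{\mathcal{X},b}}$, so if the two maps are equal these spaces coincide; this gives at once the first clause of involution compatibility (the two sets span the same $l$-dimensional singular subspace), and it lets us write $u_i = \sum_j \alpha_{ji} x_j$ for a unique invertible matrix $A=[\alpha_{ij}]$.

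For the forward direction, assume $\tau_{\mathcal{U},a}=\tau_{\mathcal{X},b}$. Subtracting $w$ from both closed forms and substituting $u_i=\sum_j\alpha_{ji}x_j$ yields, for every $w\in V$,
\[ \sum_{k=1}^{l}\Bigl(\sum_{j=1}^{l}\bigl(\textstyle\sum_{i} a_i\alpha_{ji}\alpha_{ki}\bigr)B(x_j,w)\Bigr)x_k \;=\; \sum_{k=1}^{l} b_k\, B(x_k,w)\, x_k. \]
Since $\mathcal{X}$ is linearly independent, I may equate the coefficient of each $x_k$, obtaining for each fixed $k$ a linear relation among the functionals $w\mapsto B(x_j,w)$, $j=1,\dots,l$. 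The crucial point is that these functionals are linearly independent on $V$: as $(V,\q)$ is nonsingular, $B$ is nondegenerate, so the map $w\mapsto(B(x_1,w),\dots,B(x_l,w))$ is surjective onto $k^l$ (here we use that the $x_j$ are linearly independent). Consequently every coefficient in each relation must vanish, which gives exactly
\[ b_j = \sum_{i} a_i\,\alpha_{ji}^2 \quad\text{and}\quad 0 = \sum_{i} a_i\,\alpha_{ji}\alpha_{ki}\ \ (j\neq k), \]
i.e.\ equations (1) and (2). In matrix form these say $A^{T}[a_i]A=[b_i]$, so $(\mathcal{U},a)$ and $(\mathcal{X},b)$ are involution compatible.

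The converse is obtained by running this computation backwards: given that $\mathcal{U}$ and $\mathcal{X}$ span the same subspace with $u_i=\sum_j\alpha_{ji}x_j$ and that (1)-(2) hold, the displayed substitution collapses $\sum_i a_i B(u_i,w)u_i$ into $\sum_k b_k B(x_k,w)x_k$ for every $w$, whence $\tau_{\mathcal{U},a}=\tau_{\mathcal{X},b}$; no independence of functionals is needed in this direction. The main obstacle to watch is precisely the coefficient-matching step in the forward direction: one must be careful to separate it into the two layers of independence (first of the target vectors $x_k$, then of the bilinear functionals $B(x_j,\cdot)$), and the second of these is exactly where nonsingularity of $V$ is invoked. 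Everything else is bookkeeping built on the closed-form expression from Step 1.
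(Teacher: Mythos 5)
Your proof is correct and follows essentially the same route as the paper: both compare the closed-form expressions $w \mapsto w + \sum_i a_i B(u_i,w)u_i$, write $u_i = \sum_j \alpha_{ji}x_j$ after identifying the common span, and use nonsingularity of $(V,\q)$ to match coefficients and arrive at $A^T[a_i]A=[b_i]$. The only cosmetic difference is that the paper extracts the span equality and the relations (1)--(2) by evaluating at explicit dual vectors $v_i$ and $y_j$ from a symplectic completion, whereas you obtain the span from the residual-space lemma and phrase the coefficient-matching as linear independence of the functionals $B(x_j,\cdot)$ --- an equivalent use of the nondegeneracy of $B$.
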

\begin{proof}
Suppose $\tau_{\mathcal{U},a} = \tau_{\mathcal{X},b}$.  Then for $w \in V$,
\begin{equation}\label{sympinveq}
a_1B(u_1,w)u_1 +  \cdots + a_lB(u_l,w)u_l = b_1 B(x_1,w) x_1 + \cdots b_l B(x_l,w)x_l. 
\end{equation}
For each $u_i$ there exists a $v_i$ such that the set of $v_i$ provide a nonsingular completion of dimension $2l$.   Choosing $w=v_i$ we see that
\[ a_i u_i = b_1 B(x_1,v_i) x_1 + \cdots b_l B(x_l,v_i) x_l. \]
This shows that $\mathcal{U}$ and $\mathcal{X}$ span the same nonsingular space.  We choose coefficients for $u_i$ in terms of $\mathcal{X}$ as
\[ u_i = \displaystyle\sum_{j=1}^l \alpha_{ji}x_j. \]
Now substituting our new expression into Equation \ref{sympinveq} and replacing $w$ with $y_j$ such that $B(x_k,y_k) = 1$ and $B(x_j, y_k) = 0$ when $j \neq k$ we have 
\begin{equation}\label{sympinveq2}
a_1B\left(\sum_{j=1}^l \alpha_{j1}x_j,y_k \right)\sum_{j=1}^l \alpha_{j1}x_j+  \cdots + a_lB\left(\sum_{j=1}^l \alpha_{jl}x_j,y_k \right)\sum_{j=1}^l \alpha_{jl}x_j = b_k x_k. 
\end{equation}
Now simplifying the bilinear forms we arrive at Equation \ref{symp_inv_mat}.

If we assume that $(\mathcal{U},a)$ and $(\mathcal{X},b)$ are involution compatible we can reconstruct Equation \ref{sympinveq} from basis vectors and we have $\tau_{\mathcal{U},a} = \tau_{\mathcal{X},b}$.
\end{proof}

\begin{corollary}
Two diagonal involutions $\tau_{\mathcal{U},a}$ and $\tau_{\mathcal{X},b}$ are $\Sp(B,k)$-conjugate if and only if there exists $\mathcal{X}'$ such that $(\mathcal{X}',a)$ is involution compatible with $(\mathcal{X},b)$.
\end{corollary}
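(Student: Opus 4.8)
The plan is to reduce the conjugacy statement to the equality criterion of the preceding theorem by first understanding how symplectic conjugation acts on a single transvection. The central computation is that for any $g \in \Sp(B,k)$ one has $g\,\tau_{u,a}\,g^{-1} = \tau_{gu,a}$. Indeed, writing $g\tau_{u,a}g^{-1}(w) = g(g^{-1}w + aB(u,g^{-1}w)u)$ and using that $g$ is an isometry of $B$, so that $B(u,g^{-1}w) = B(gu,w)$, one obtains $w + aB(gu,w)gu = \tau_{gu,a}(w)$. Since conjugation is multiplicative across the factors, this immediately gives $g\,\tau_{\mathcal{U},a}\,g^{-1} = \tau_{g\mathcal{U},a}$, where $g\mathcal{U} = \{gu_1,\dots,gu_l\}$. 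The key structural feature to record is that the coefficient vector $a$ is left unchanged and only the inducing vectors are transported by $g$.

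For the forward direction, suppose $\tau_{\mathcal{U},a}$ and $\tau_{\mathcal{X},b}$ are conjugate by some $g \in \Sp(B,k)$. Then $\tau_{g\mathcal{U},a} = \tau_{\mathcal{X},b}$ by the formula above, and setting $\mathcal{X}' = g\mathcal{U}$, the preceding theorem shows precisely that $(\mathcal{X}',a)$ is involution compatible with $(\mathcal{X},b)$. This is the routine half.

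For the converse, assume some $\mathcal{X}'$ with $(\mathcal{X}',a)$ involution compatible with $(\mathcal{X},b)$; in particular $\tau_{\mathcal{X}',a} = \tau_{\mathcal{X},b}$ by the theorem. It then suffices to produce $g \in \Sp(B,k)$ carrying the ordered set $\mathcal{U}$ to $\mathcal{X}'$ index by index, for then $g\,\tau_{\mathcal{U},a}\,g^{-1} = \tau_{g\mathcal{U},a} = \tau_{\mathcal{X}',a} = \tau_{\mathcal{X},b}$. Here I would invoke a Witt-type extension theorem. Because $B$ is alternating in characteristic $2$, so that $B(v,v)=0$ for every $v$, and the $u_i$ are mutually orthogonal and linearly independent, $\spa\mathcal{U}$ is a totally isotropic subspace of dimension $l$; likewise $\spa\mathcal{X}' = \spa\mathcal{X}$ is totally isotropic of dimension $l$ by the span condition built into involution compatibility. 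Hence the linear map $u_i \mapsto x_i'$ is an isometry between two totally isotropic subspaces, since $B$ vanishes identically on each, and Witt's extension theorem for the nondegenerate symplectic space $(V,B)$ extends it to the desired $g \in \Sp(B,k)$.

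The main obstacle is the converse, specifically the justification of the extension step. One must check that $V$ nonsingular forces $B$ to be a nondegenerate symplectic form, so that Witt's theorem is genuinely available, and that both $\spa\mathcal{U}$ and $\spa\mathcal{X}'$ are totally isotropic of the same dimension $l$; this is exactly what makes the naive assignment $u_i \mapsto x_i'$ an isometry of subspaces rather than merely a linear map. The remaining care is bookkeeping: tracking that conjugation fixes the coefficient vector $a$ while the involution-compatibility relation $A^T[a_i]A = [b_i]$ absorbs the change of basis between $\mathcal{X}'$ and $\mathcal{X}$, so that the two reductions meet cleanly through the equality theorem.
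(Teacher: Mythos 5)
Your argument is correct, and since the paper states this corollary without proof, it supplies exactly the intended reasoning: the transport formula $g\tau_{u,a}g^{-1}=\tau_{gu,a}$ (the symplectic analogue of Proposition \ref{conj_beh}) reduces conjugacy to the equality criterion of the preceding theorem, with Witt extension for the nondegenerate alternating form on the nonsingular space $V$ handling the converse, where the extension step is legitimate because $\spa\mathcal{U}$ and $\spa\mathcal{X}'=\spa\mathcal{X}$ are totally isotropic of the same dimension $l$, so $u_i\mapsto x_i'$ is automatically an isometry. One small point of hygiene: the claim that $B$ is alternating should be justified not as a general fact about symmetric bilinear forms in characteristic $2$ (that is false in general) but from the polar identity $B(v,v)=\q(v+v)+\q(v)+\q(v)=0$, or simply from the standing assumption that $B$ is symplectic in the definition of $\Sp(B,k)$.
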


In 2.1.8 of \cite{om78} the following Theorem is stated.

\begin{thm}
Let $\sigma \in \Sp(B,k)$ be hyperbolic with residual space $R_{\sigma}$.  Let $\tau$ be any transvection such that $R_{\tau} \subset R_{\sigma}$.  Then $R_{\tau \sigma} = R_{\sigma}$, but $\tau \sigma$ is not hyperbolic.
\end{thm}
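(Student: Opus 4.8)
The plan is to reduce both assertions to two structural facts about residual and fixed spaces. Write $F_\gamma = \ker(\gamma - \id_V)$ for the fixed space of $\gamma \in \Sp(B,k)$. The key lemma I would establish first is that, on a nonsingular space, $F_\gamma = R_\gamma^{\perp}$ for every $\gamma \in \Sp(B,k)$: indeed $B((\gamma - \id_V)x, y) = B(\gamma x, y - \gamma y)$ by the isometry property, and since $\gamma x$ runs over all of $V$, this vanishes for all $x$ exactly when $y - \gamma y$ lies in the (trivial) radical, i.e. when $y \in F_\gamma$. A second observation I would record is that every symplectic involution fixes its own residual space pointwise in characteristic two: if $u = (\sigma - \id_V)x$, then $\sigma u = (\id_V - \sigma)x = u$. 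Applying this to the transvection vector $u$ (so that $R_\tau = \spa\{u\} \subseteq R_\sigma$) gives $\sigma u = u$, which I will use repeatedly.

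For the equality $R_{\tau\sigma} = R_\sigma$, I would instead prove $F_{\tau\sigma} = F_\sigma$ and then invoke the lemma, since $W \mapsto W^{\perp}$ is an inclusion-reversing involution on subspaces of a nonsingular space. Writing $\tau = \tau_{u,a}$ with $a \neq 0$, one inclusion is immediate: if $w \in F_\sigma = R_\sigma^{\perp}$, then $B(u,w) = 0$ because $u \in R_\sigma$, so $\tau\sigma(w) = \tau(w) = w$. For the reverse inclusion, a fixed vector $w$ of $\tau\sigma$ satisfies $\sigma w = w + cu$ with $c = aB(u, \sigma w)$; using $\sigma u = u$ and $B(u,u)=0$ this simplifies to $c = aB(u,w)$. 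Here is where hyperbolicity of $\sigma$ enters: the condition $B(w, \sigma w) = 0$ becomes $cB(w,u) = aB(u,w)^2 = 0$, and since $a \neq 0$ this forces $B(u,w) = 0$, hence $c = 0$ and $\sigma w = w$. Thus $F_{\tau\sigma} = F_\sigma$, and the lemma yields $R_{\tau\sigma} = R_\sigma$.

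For the claim that $\tau\sigma$ is not hyperbolic, I would simply exhibit a vector violating the hyperbolic condition. Expanding $B(v, \tau\sigma(v)) = B(v, \sigma v) + aB(u,\sigma v)B(v,u)$ and using hyperbolicity of $\sigma$ to kill the first term, together with $\sigma u = u$ (so that $B(u, \sigma v) = B(u,v)$) and the symmetry of $B$ in characteristic two, collapses this to $B(v, \tau\sigma(v)) = aB(u,v)^2$. Since $u \neq 0$ and $V$ is nonsingular, there is some $v$ with $B(u,v) \neq 0$, and then $B(v, \tau\sigma(v)) = aB(u,v)^2 \neq 0$, so $\tau\sigma$ is not hyperbolic.

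The only genuinely delicate point, and the step I expect to carry the argument, is the reverse inclusion $F_{\tau\sigma} \subseteq F_\sigma$, where hyperbolicity is indispensable: without it one only obtains $\sigma w = w + cu$ and cannot conclude $c = 0$. Everything else is formal once the identity $F_\gamma = R_\gamma^{\perp}$ and the relation $\sigma u = u$ are in hand. I would also flag at the outset that ``transvection'' here must mean a nontrivial one ($a \neq 0$), since $\tau_{u,0} = \id_V$ would make the statement false.
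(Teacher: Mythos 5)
Your proof is correct, but there is nothing in the paper to compare it against: the paper states this theorem without proof, quoting it verbatim from O'Meara \cite{om78} (2.1.8), so the verification has to be supplied externally, as you have done. Your two preliminary facts --- the duality $F_\gamma = R_\gamma^{\perp}$ between fixed and residual spaces of an isometry of a nonsingular space, and the observation that in characteristic two a symplectic involution fixes its residual space pointwise (giving $\sigma u = u$) --- are exactly the residual-space machinery of \cite{om78}, so your route is the standard one rather than a detour; its virtue is that it isolates precisely where each hypothesis enters: hyperbolicity forces $c = aB(u,w) = 0$ in the inclusion $F_{\tau\sigma} \subseteq F_\sigma$, and nonsingularity both supplies a $v$ with $B(u,v) \neq 0$ for the non-hyperbolicity claim and lets you dualize $F_{\tau\sigma} = F_\sigma$ into $R_{\tau\sigma} = R_\sigma$. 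Two small points are worth making explicit: the step $W^{\perp\perp} = W$ requires $\dim V < \infty$, which the paper assumes throughout; and the computation $B(w, \sigma w) = B(w, w + cu) = cB(w,u)$ silently uses $B(w,w) = 0$, i.e.\ that a nonsingular symmetric bilinear form in characteristic two is alternating, a fact recorded in the paper's preliminaries. Your flag that the transvection must be nontrivial ($a \neq 0$) is apt --- with $\tau = \tau_{u,0} = \id_V$ the product $\tau\sigma = \sigma$ is hyperbolic and the second assertion fails --- and it matches O'Meara's convention that transvections are by definition nontrivial.
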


The next result describes how hyperbolic maps relate to equivalent diagonal maps.

\begin{lemma}
Let $\sigma, \ \theta \in \Sp(B,k)$ be hyperbolic.  Then $\sigma = \theta$ if and only if there exists a symplectic transvection $\tau_{u,a} \in \Sp(B,k)$ where $u \in R_{\sigma}$ and $a \in k^*$, such that $\tau_{u,a}\sigma = \tau_{u,a}\theta$.
\end{lemma}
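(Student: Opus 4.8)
The plan is to reduce both implications to a single elementary fact: any symplectic transvection $\tau_{u,a}$ with $a \in k^*$ is invertible, so that left multiplication by $\tau_{u,a}$ is a bijection and the equation $\tau_{u,a}\sigma = \tau_{u,a}\theta$ is equivalent to $\sigma = \theta$. In characteristic $2$ this invertibility is especially transparent: since $B$ is symplectic we have $B(u,u)=0$, and a direct computation gives $\tau_{u,a}^2(w) = w + 2aB(u,w)u + a^2 B(u,w)B(u,u)u = w$, the middle term vanishing because $\mathrm{char}(k)=2$ and the last because $B(u,u)=0$. Hence $\tau_{u,a}$ is its own inverse.

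For the reverse implication I would simply apply $\tau_{u,a}$ on the left of the hypothesized equality: from $\tau_{u,a}\sigma = \tau_{u,a}\theta$ one obtains $\sigma = \tau_{u,a}^2\sigma = \tau_{u,a}(\tau_{u,a}\sigma) = \tau_{u,a}(\tau_{u,a}\theta) = \tau_{u,a}^2\theta = \theta$. Note that the side condition $u \in R_{\sigma}$ plays no role in this direction; it is needed only to certify that an admissible transvection can actually be produced in the forward direction.

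For the forward implication I would assume $\sigma = \theta$, in which case $\tau_{u,a}\sigma = \tau_{u,a}\theta$ holds for \emph{every} transvection, so the only thing left to verify is the existence of at least one admissible $\tau_{u,a}$, i.e.\ a choice of nonzero $u \in R_{\sigma}$ together with some $a \in k^*$. Since $\sigma$ is a nontrivial involution we have $\sigma \neq \id_V$, so $R_{\sigma} = (\sigma - \id_V)V \neq \{0\}$; picking any nonzero $u \in R_{\sigma}$ and any $a \in k^*$ furnishes the required transvection, and the equality holds because $\sigma = \theta$.

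I do not expect a genuine obstacle: the substance of the lemma is recognizing that invertibility of $\tau_{u,a}$ collapses both directions, combined with the observation that a nontrivial hyperbolic involution has nonzero residual space. The only point deserving a remark is the purpose of the constraint $u \in R_{\sigma}$: by the preceding theorem, taking $u \in R_{\sigma}$ forces $\tau_{u,a}\sigma$ to be a non-hyperbolic (diagonal) map with the same residual space $R_{\sigma}$, which is precisely what makes this lemma useful as a bridge between the hyperbolic involution $\sigma$ and its associated diagonal map, even though the condition is not logically required for the stated equivalence.
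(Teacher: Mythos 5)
Your proof is correct and takes essentially the same route as the paper's own (two-line) argument: both directions reduce to the fact that $\tau_{u,a}^2 = \id_V$, with the forward direction needing only the existence of some nonzero $u \in R_{\sigma}$, which holds since $\sigma \neq \id_V$. Your explicit verification that $\tau_{u,a}$ is its own inverse in characteristic $2$ and your closing remark on why the condition $u \in R_{\sigma}$ is imposed merely make explicit what the paper leaves implicit.
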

\begin{proof}
	If $\sigma = \theta$, then one may choose any $u \in R_{\sigma}=R_{\theta}$, $a \in k^*$.  Now if such a $\tau_{u,a}$ exists, then $\sigma = \theta$ since $\tau_{u,a}^2 = \id_V$.
\end{proof}

Let $\tau_{u,a}$ be a symplectic involution and notice that $\tau_{u,a} \in \Orth(q,k)$ only if
\begin{align*}
\q\left(\tau_{u,a}(w) \right) &= \q(w + a B(w,u)u) \\
&= \q(w) + \q\left( a B(w,u)u \right) + B\left(w,a B(w,u)u \right)\\
&= \q(w) + a^2B(w,u)^2 \q(u) + a B(w,u)^2 \\
&= \q(w),
\end{align*}
so $B(w,u)^2a(a\q(u) + 1) = 0$.  So either $B(w,u) = 0$ for all $w$, $a=0$ or $\q(u) = 1/a$.  Therefore we will refer to $\tau_{u,\frac{1}{\q(u)}}$ by $\tau_u$. 

\begin{prop} \label{two_transv}
Two orthogonal tranvections $\tau_u$ and $\tau_x$ are equal if and only if $x = \alpha u$ for some $\alpha \in k$.
\end{prop}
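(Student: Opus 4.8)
The plan is to prove the two implications separately, with the reverse direction being a direct computation and the forward direction resting on the nonsingularity of $V$.

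For the reverse direction I would assume $x = \alpha u$ for some $\alpha \in k$. Since $\tau_x$ is an orthogonal transvection we have $\q(x) \neq 0$, and because $\q(x) = \q(\alpha u) = \alpha^2 \q(u)$ this forces $\alpha \neq 0$ and $\q(u) \neq 0$. Substituting $x = \alpha u$ into $\tau_x(w) = w + \frac{B(x,w)}{\q(x)} x$ and using the scaling identities $B(\alpha u, w) = \alpha B(u,w)$ and $\q(\alpha u) = \alpha^2 \q(u)$, the three factors of $\alpha$ appearing in the numerator cancel the $\alpha^2$ in the denominator, leaving $\tau_x(w) = w + \frac{B(u,w)}{\q(u)} u = \tau_u(w)$ for every $w \in V$. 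Hence $\tau_x = \tau_u$.

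For the forward direction I would argue via residual spaces. If $\tau_u = \tau_x$, then the two maps agree on all $w \in V$, so $\frac{B(u,w)}{\q(u)} u = \frac{B(x,w)}{\q(x)} x$ for every $w$; equivalently, the residual spaces coincide, $R_{\tau_u} = R_{\tau_x}$. The crucial step is to identify each residual space as a line. Since $V$ is nonsingular, the associated symmetric (hence symplectic) bilinear form $B$ is nondegenerate, so the nonzero vector $u$ cannot be orthogonal to all of $V$; thus there exists $w$ with $B(u,w) \neq 0$, which shows $R_{\tau_u} = (\tau_u - \id)V = \spa\{u\}$, and likewise $R_{\tau_x} = \spa\{x\}$. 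From $\spa\{u\} = \spa\{x\}$ we conclude $x = \alpha u$ for some (necessarily nonzero) $\alpha \in k$.

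The only genuine obstacle is producing a vector $w$ with $B(u,w) \neq 0$, so that the residual space is truly one-dimensional and spanned by $u$; this is exactly where the nonsingularity hypothesis on $(V,\q)$ enters, since in the presence of a nontrivial radical a transvection vector could lie in $\rad(V)$ and the identification of $R_{\tau_u}$ would fail. Everything else reduces to the elementary scaling identities $\q(\alpha u) = \alpha^2 \q(u)$ and $B(\alpha u, w) = \alpha B(u,w)$.
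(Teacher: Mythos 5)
Your proof is correct and takes essentially the same route as the paper: the identical scaling computation for the reverse direction, and for the forward direction the same pointwise identity $\frac{B(u,w)}{\q(u)}u = \frac{B(x,w)}{\q(x)}x$, which the paper resolves by solving for $u$ at a fixed $w$ while you phrase it via residual spaces. If anything, your explicit appeal to nondegeneracy of $B$ to produce $w$ with $B(u,w)\neq 0$ makes precise a step the paper leaves implicit when it divides by $B(u,w)$.
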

\begin{proof}
First assuming $x = \alpha u$, we have
\begin{align*}
\tau_{\alpha u}(w) &= w + \frac{B(\alpha u, w )}{\q(\alpha u)} \alpha u \\
&= w + \frac{\alpha B(u, w )}{\alpha^2\q( u)} \alpha u \\
&= \tau_u(w).
\end{align*}
Therefore $\tau_u = \tau_x$.

Now consider $\tau _u = \tau _x$.  Then 
\begin{align*}
w + \frac{B(u, w )}{\q(u)} u &= w + \frac{B(x,w)}{\q(x)}x \\
\frac{B(u, w )}{\q(u)} u &= \frac{B(x,w)}{\q(x)}x \\
u &= \frac{B(x,w)\q(u)}{B(u,w) \q(x)}x.  
\end{align*}
Therefore, setting $\alpha = \frac{B(x,w)\q(u)}{B(u,w)\q(x)}$, we have $u= \alpha x$.
\end{proof}

\begin{prop}
Let $\phi \in \Orth(\q,k)$.  Then for a product of transvections
\[  \tau_{u_1}\tau_{u_2}\cdots \tau_{u_l} \in \Orth(\q,k), \]
we have the conjugation relation
\[ \phi \tau_{u_1}\tau_{u_2}\cdots \tau_{u_l} \phi^{-1} = \tau_{\phi(u_1)}\tau_{\phi(u_2)} \cdots \tau_{\phi(u_l)}. \]
\label{conj_beh}
\end{prop}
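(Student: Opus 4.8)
The plan is to reduce the product statement to the single-transvection case and then compose. The essential input is that every $\phi \in \Orth(\q,k)$ preserves not only $\q$ but also the associated bilinear form $B$; in this section $V$ is nonsingular, so $\Orth(\q,k) \subset \Sp(B,k)$ and thus $B(\phi(v),\phi(v')) = B(v,v')$ for all $v,v' \in V$. Since $\phi$ is linear and invertible, it also follows that $B(u,\phi^{-1}(w)) = B(\phi(u),w)$ for all $u,w \in V$.

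First I would prove the base case $\phi \tau_u \phi^{-1} = \tau_{\phi(u)}$ for a single orthogonal transvection $\tau_u = \tau_{u,1/\q(u)}$. Evaluating on an arbitrary $w \in V$ and using linearity of $\phi$ gives
\[
(\phi \tau_u \phi^{-1})(w) = \phi\!\left( \phi^{-1}(w) + \frac{B(u,\phi^{-1}(w))}{\q(u)}\, u \right) = w + \frac{B(u,\phi^{-1}(w))}{\q(u)}\, \phi(u).
\]
Now I substitute the two invariance facts: $B(u,\phi^{-1}(w)) = B(\phi(u),w)$ from preservation of $B$, and $\q(u) = \q(\phi(u))$ from membership in $\Orth(\q,k)$. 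This rewrites the right-hand side as $w + \frac{B(\phi(u),w)}{\q(\phi(u))}\phi(u) = \tau_{\phi(u)}(w)$, establishing the base case. I note in passing that this also confirms $\q(\phi(u)) \neq 0$, so $\tau_{\phi(u)}$ is a genuine orthogonal transvection with the correct coefficient $1/\q(\phi(u)) = 1/\q(u)$.

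For the general product I would insert copies of $\id_V = \phi^{-1}\phi$ between consecutive factors and telescope:
\[
\phi\, \tau_{u_1}\tau_{u_2}\cdots \tau_{u_l}\, \phi^{-1}
= (\phi \tau_{u_1} \phi^{-1})(\phi \tau_{u_2} \phi^{-1}) \cdots (\phi \tau_{u_l} \phi^{-1}).
\]
Applying the base case to each factor yields $\tau_{\phi(u_1)}\tau_{\phi(u_2)}\cdots \tau_{\phi(u_l)}$, as claimed. This step is purely formal once the base case is in hand.

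I do not anticipate a serious obstacle here; the only point demanding care is the justification of $B(u,\phi^{-1}(w)) = B(\phi(u),w)$, which rests on the inclusion $\Orth(\q,k) \subset \Sp(B,k)$ valid precisely because $V$ is nonsingular in the present setting. If one wished to avoid invoking that inclusion, the identity $B(\phi(v),\phi(v'))=B(v,v')$ can be derived directly from $\q(\phi(v+v')) = \q(v+v')$ together with $\q(\phi(v))=\q(v)$ and $\q(\phi(v'))=\q(v'))$, using $B(w,w') = \q(w+w')+\q(w)+\q(w')$.
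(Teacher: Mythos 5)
Your proposal is correct and takes essentially the same route as the paper's proof: the identical one-line computation $\phi\tau_u\phi^{-1}(w) = w + \frac{B(u,\phi^{-1}(w))}{\q(u)}\phi(u) = w + \frac{B(\phi(u),w)}{\q(\phi(u))}\phi(u) = \tau_{\phi(u)}(w)$ for a single transvection, followed by the same telescoping insertion of $\phi^{-1}\phi$ between consecutive factors. Your extra remarks spelling out why $B(u,\phi^{-1}(w)) = B(\phi(u),w)$ holds (via $\Orth(\q,k)\subset\Sp(B,k)$, or directly from $B(w,w')=\q(w+w')+\q(w)+\q(w')$) merely make explicit a step the paper leaves implicit.
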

\begin{proof}
First notice that
\[ \phi \tau_{u} \phi^{-1} (w) = w + \dfrac{B(u,\phi^{-1}(w))}{\q(u)} \phi(u) = w +\dfrac{B(\phi(u),w)}{\q(\phi(u))} \phi(u) = \tau_{\phi(u)}(w). \]
Now we see that
\begin{align*}
\phi \tau_{u_1}\tau_{u_2}\cdots \tau_{u_l} \phi^{-1} &= \phi \tau_{u_1} \phi^{-1} \phi \tau_{u_2} \phi^{-1}\cdots \phi \tau_{u_l} \phi^{-1} \\
&=\tau_{\phi(u_1)}\tau_{\phi(u_2)} \cdots \tau_{\phi(u_l)},
\end{align*}
as required.
\end{proof}

Consider the reduced diagonal involution 
\[ \tau = \tau_{u_1} \tau_{u_2} \cdots \tau_{u_l}, \]
where as before $\mathcal{U} = \{u_1, u_2, \ldots, u_l\}$ are mutually orthogonal vectors.  If we consider the subspace $ \spa \mathcal{U}\subset V$,
then we have
\[ \q\vert_{\text{span}\mathcal{U}} \sim \langle \q(u_1),\q(u_2), \ldots, \q(u_l) \rangle. \]

\begin{prop} \label{1_over_q_to_q}
If $\q(u_i) \neq 0$ for $1\leq i \leq l$ then
\[ \left\langle \dfrac{1}{\q(u_1)}, \dfrac{1}{\q(u_2)}, \ldots, \dfrac{1}{\q(u_l)} \right\rangle_B \cong  \left\langle \dfrac{1}{\q(x_1)}, \dfrac{1}{\q(x_2)}, \ldots, \dfrac{1}{\q(x_l)} \right\rangle_B \]
if and only if 
\[ \left\langle \q(u_1), \q(u_2), \ldots, \q(u_l) \right\rangle_B \cong  \left\langle \q(x_1), \q(x_2), \ldots, \q(x_l) \right\rangle_B. \]
\end{prop}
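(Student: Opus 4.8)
The plan is to reduce both bilinear-form equivalences to the matrix criterion already recorded in the excerpt: for nonzero scalars, $\langle a_1,\dots,a_l\rangle_B \cong \langle b_1,\dots,b_l\rangle_B$ holds exactly when there is an invertible matrix $A\in\GL(l,k)$ with $A^T\diag(a_1,\dots,a_l)A = \diag(b_1,\dots,b_l)$. The single observation driving everything is that the diagonal matrix of reciprocals is the inverse of the diagonal matrix,
\[ \diag\!\left(\tfrac{1}{\q(u_1)},\dots,\tfrac{1}{\q(u_l)}\right) = \diag\!\left(\q(u_1),\dots,\q(u_l)\right)^{-1}, \]
which is legitimate precisely because $\q(u_i)\neq 0$ for every $i$.

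First I would assume $\langle \q(u_1),\dots,\q(u_l)\rangle_B \cong \langle \q(x_1),\dots,\q(x_l)\rangle_B$ and take a witnessing invertible $A$ with $A^T\diag(\q(u_i))A = \diag(\q(x_i))$. Writing $M=\diag(\q(u_i))$ and $N=\diag(\q(x_i))$, I invert both sides of $A^TMA=N$ and use $(A^T)^{-1}=(A^{-1})^T$ to obtain $A^{-1}M^{-1}(A^{-1})^T = N^{-1}$. Setting $C := (A^{-1})^T$, so that $C^T=A^{-1}$, this rearranges to
\[ C^T\,\diag\!\left(\tfrac{1}{\q(u_i)}\right)\,C = \diag\!\left(\tfrac{1}{\q(x_i)}\right). \]
Since $A$ is invertible, so is $C$, and this is exactly the matrix condition certifying $\langle 1/\q(u_1),\dots,1/\q(u_l)\rangle_B \cong \langle 1/\q(x_1),\dots,1/\q(x_l)\rangle_B$.

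For the converse I would simply run the identical computation in the other direction: reciprocation is an involution on $k^*$, so applying the displayed identity to the forms $\langle 1/\q(u_i)\rangle_B$ and $\langle 1/\q(x_i)\rangle_B$ recovers $\langle \q(u_i)\rangle_B$ and $\langle \q(x_i)\rangle_B$. Thus the two statements are formally symmetric and the equivalence follows immediately.

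There is essentially no substantive obstacle here; the content is a one-line transpose-inverse manipulation. The only points needing care are purely bookkeeping: verifying that $C=(A^{-1})^T$ is invertible (immediate from invertibility of $A$) and confirming that all reciprocals are defined. The hypothesis supplies $\q(u_i)\neq 0$; that $\q(x_i)\neq 0$ as well is forced, since equivalent diagonal bilinear forms share the same rank and $\langle \q(u_1),\dots,\q(u_l)\rangle_B$ has full rank $l$. I would state this consistency check explicitly so that the reciprocal forms on the right-hand side are well defined before invoking the matrix criterion.
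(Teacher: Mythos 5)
Your proof is correct, and it arrives at the same matrix criterion as the paper by a different (and cleaner) manipulation. Writing $M=\diag(\q(u_1),\ldots,\q(u_l))$ and $N=\diag(\q(x_1),\ldots,\q(x_l))$, the paper starts from the congruence $A^{T}M^{-1}A=N^{-1}$, sandwiches it between the commuting diagonal matrices $MN$, and extracts the explicit witness $M^{-1}AN$ satisfying $(M^{-1}AN)^{T}M(M^{-1}AN)=N$; you instead invert the congruence and use $(A^{T})^{-1}=(A^{-1})^{T}$. The two constructions are in fact secretly identical: from $A^{T}M^{-1}A=N^{-1}$ one gets $A^{-1}=NA^{T}M^{-1}$, so your witness $(A^{T})^{-1}=(NA^{T}M^{-1})^{T}=M^{-1}AN$ is exactly the paper's matrix. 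What your route buys is conceptual transparency: the proposition becomes the single observation that congruent invertible symmetric matrices have congruent inverses, and since inversion is an involution on diagonal matrices with nonzero entries, the converse direction is automatic by symmetry --- whereas the paper merely asserts that ``the argument is reversible.'' You also make explicit a point the paper leaves implicit, namely that $\q(x_i)\neq 0$ is forced (congruence preserves rank, and $\diag(\q(u_i))$ has full rank $l$), so the reciprocal form on the right-hand side is well defined before the matrix criterion is invoked. Your appeal to that criterion --- equivalence of the diagonal bilinear forms if and only if $A^{T}\diag(a_i)A=\diag(b_i)$ for some invertible $A$ --- is exactly how the paper uses its displayed identity for involution compatibility, so the reduction is consistent with the framework already set up in the section.
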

\begin{proof}
If
\[ \left\langle \dfrac{1}{\q(u_1)}, \dfrac{1}{\q(u_2)}, \ldots, \dfrac{1}{\q(u_l)} \right\rangle_B \cong  \left\langle \dfrac{1}{\q(x_1)}, \dfrac{1}{\q(x_2)}, \ldots, \dfrac{1}{\q(x_l)} \right\rangle_B, \]
then there exists some $A$ such that 
\[ A^T\left[\dfrac{1}{\q(u_i)} \right] A =  \left[\dfrac{1}{\q(x_i)}\right]. \]
Notice that
\[ [\q(u_i)] [\q(x_i)] A^T \left[\dfrac{1}{\q(u_i)} \right] A [\q(x_i)][\q(u_i)] = \left[\q(u_i)^2\q(x_i)\right] \]
and letting $A'=[\q(u_i)]^{-1}[\q(x_i)]^{-1}A[\q(x_i)]\q(u_i)]$ then
\[ ([\q(x_i)]A'[\q(u_i)]^{-1})^T[ \q(u_i)]([\q(x_i)]A'[\q(u_i)]^{-1}) =  \left[\q(x_i)\right]. \]
This gives us 
\[ \left\langle \dfrac{1}{\q(u_1)}, \dfrac{1}{\q(u_2)}, \ldots, \dfrac{1}{\q(u_l)} \right\rangle_B \cong  \left\langle \dfrac{1}{\q(x_1)}, \dfrac{1}{\q(x_2)}, \ldots, \dfrac{1}{\q(x_l)} \right\rangle_B \]
implies
\[ \left\langle \q(u_1), \q(u_2), \ldots, \q(u_l) \right\rangle_B \cong  \left\langle \q(x_1), \q(x_2), \ldots, \q(x_l) \right\rangle_B, \]
and the argument is reversible for the converse.
\end{proof}

\begin{corollary}\label{invcomp_norm}
If
\[ \left\langle \q(u_1), \q(u_2), \ldots, \q(u_l) \right\rangle_B \cong  \left\langle \q(x_1), \q(x_2), \ldots, \q(x_l) \right\rangle_B, \]
then 
\[ \left\langle \q(u_1), \q(u_2), \ldots, \q(u_l) \right\rangle \cong  \left\langle \q(x_1), \q(x_2), \ldots, \q(x_l) \right\rangle. \]
\end{corollary}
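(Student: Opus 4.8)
The plan is to unwind both notions of equivalence into matrix relations and then observe that the isometry of the totally singular quadratic forms is governed by a strictly weaker condition than the equivalence of the diagonal bilinear forms. First I would invoke the matrix reformulation recorded in Equation \ref{symp_inv_mat}: the hypothesis $\langle \q(u_1),\ldots,\q(u_l)\rangle_B \cong \langle \q(x_1),\ldots,\q(x_l)\rangle_B$ provides an invertible matrix $A=[\alpha_{ij}]$ with
\[ A^T \diag(\q(u_1),\ldots,\q(u_l))\, A = \diag(\q(x_1),\ldots,\q(x_l)). \]
Reading off only the diagonal entries of this identity gives $\q(x_j)=\sum_i \alpha_{ij}^2\,\q(u_i)$ for every $j$, which is exactly the diagonal content of Equation \ref{symp_inv_mat} (the first relation in the definition of involution compatibility); the off-diagonal entries yield the extra orthogonality relations $\sum_i \alpha_{ij}\alpha_{ik}\,\q(u_i)=0$ for $j\neq k$, and these I would simply discard.

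Next I would recall that the unsubscripted symbol $\langle \q(u_1),\ldots,\q(u_l)\rangle$ denotes a totally singular quadratic form, so its associated bilinear form vanishes identically. Concretely, realizing this form as $\q_u(\sum_i z_i p_i)=\sum_i \q(u_i) z_i^2$ on a space with basis $\{p_1,\ldots,p_l\}$, characteristic $2$ forces all cross terms to cancel, so $\q_u$ is additive and satisfies $\q_u(\lambda v)=\lambda^2\q_u(v)$; the same holds for the form $\q_x$ built from the $\q(x_i)$. I would then define the $k$-linear map $T$ from the $\q_x$-space to the $\q_u$-space by $T(p_j)=\sum_i \alpha_{ij}p_i$, which is a bijection because $A$ is invertible.

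Finally I would verify that $T$ is an isometry using only the diagonal relations harvested above. On a basis vector, $\q_u(T(p_j))=\sum_i \q(u_i)\alpha_{ij}^2=\q(x_j)=\q_x(p_j)$, and for an arbitrary vector the additivity of the totally singular form gives $\q_u(T(\sum_j z_j p_j))=\sum_j z_j^2\,\q_u(T(p_j))=\sum_j z_j^2\,\q(x_j)=\q_x(\sum_j z_j p_j)$, with no cross terms to control. This exhibits the desired isometry $\langle \q(u_1),\ldots,\q(u_l)\rangle\cong\langle \q(x_1),\ldots,\q(x_l)\rangle$. I do not expect a genuine obstacle here; the one point that deserves emphasis is the asymmetry that makes the statement meaningful, namely that bilinear equivalence requires the full matrix identity (equivalently, both conditions of involution compatibility) whereas the totally singular quadratic forms require only the diagonal relation. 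Consequently the implication is genuinely one-directional, since the discarded orthogonality conditions cannot in general be recovered from the quadratic form alone.
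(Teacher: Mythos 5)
Your proof is correct and follows exactly the route the paper intends: the corollary is stated without proof there precisely because it amounts to reading off the diagonal entries of the matrix identity in Equation \ref{symp_inv_mat} (condition (1) of involution compatibility), which in characteristic $2$ is the isometry condition for the totally singular forms, the off-diagonal conditions being the extra data that makes the implication strictly one-directional (as the paper's counterexample after the corollary confirms). Your explicit construction of the isometry $T$ via the matrix $A$ and the additivity of totally singular forms is a faithful and complete fleshing-out of that argument.
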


In general the converse of Corollary \ref{invcomp_norm} is not true.  In particular consider two diagonal involutions of length $2$ 
\[ \tau_{u_2}\tau_{u_1}, \tau_{x_2}\tau_{x_1} \in \Orth(\q,k) \]
over $k = \mathbb{F}_2(t_1,t_2)$ such that
\begin{align*}
\q(x_1) &= \q(u_1) + t_1^2\q(u_2) \text{ and} \\
\q(x_2) &= \q(u_1) + \q(u_2).
\end{align*}
Let $\q(u_1) =1$ and $\q(u_2)=t_2$.  Notice that $\q(x_1),\q(x_2) \in k^2[\q(u_1,\q(u_2)]$, which gives us that $\langle \q(u_1), \q(u_2) \rangle \cong \langle \q(x_1),\q(x_2) \rangle$.  In this case $\q(u_1)$ and $\q(u_2)$ form a basis for a $k^2$-vector space of dimension $2$ and so do $\q(x_1)$ and $\q(x_2)$.  Therefore any matrix $A$ such that $A^T[\q(u_i)]A = [\q(x_i)]$ and $A=[\alpha_{kj}]$ must have $\alpha_{11}=\alpha_{12}=\alpha_{22}=1$ and $\alpha_{21}=t_1$ and so
\[ \begin{bmatrix}
	1 & 1 \\
	t_1 & 1
	\end{bmatrix}^T 
	 \begin{bmatrix}
	1 & 0 \\
	0 & t_2
	\end{bmatrix}
	 \begin{bmatrix}
	1 & 1 \\
	t_1 & 1
	\end{bmatrix} = 
	 \begin{bmatrix}
	1 + t_1^2 t_2  & 1 + t_1t_2  \\
	 1 + t_1t_2 & 1 + t_2
	\end{bmatrix}. \]
Since that off diagonal entries, $1+t_1t_2$, are not zero the conditions for $\langle \q(u_1),\q(u_2) \rangle_B \cong \langle \q(x_1),\q(x_2) \rangle_B$ are not satisfied and we have a counter example.
\begin{lemma}\label{equal_tr}
Two orthogonal involutions given by reduced products of orthogonal transvections are equal, $\tau_{u_l} \cdots \tau_{u_2} \tau_{u_1} = \tau_{x_l} \cdots \tau_{x_2} \tau_{x_1}$, if and only if 
\[ \spa \{u_1, u_2, \ldots, u_l \} = \spa\{x_1,x_2, \ldots, x_l\}, \]
and
\[ \left\langle \q(u_1), \q(u_2), \ldots, \q(u_l) \right\rangle_B \cong  \left\langle \q(x_1), \q(x_2), \ldots, \q(x_l) \right\rangle_B. \]
\end{lemma}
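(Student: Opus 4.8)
The plan is to reduce the statement to the characterization of when two \emph{diagonal} symplectic involutions coincide that was established earlier in this section, and then trade the resulting bilinear form on the reciprocals of the norms for the bilinear form on the norms themselves. First I would observe that each orthogonal transvection $\tau_{u_i}$ is by definition the symplectic transvection $\tau_{u_i,1/\q(u_i)}$, and likewise $\tau_{x_i} = \tau_{x_i,1/\q(x_i)}$; here every scalar $1/\q(u_i)$ and $1/\q(x_i)$ lies in $k^*$ because an orthogonal transvection requires $\q(u_i)\neq 0$. Hence the two products are exactly $\tau_{\mathcal{U},a}$ and $\tau_{\mathcal{X},b}$ with $a=[\,1/\q(u_i)\,]$ and $b=[\,1/\q(x_i)\,]$. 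Since the factorizations are \emph{reduced}, their lengths equal the residual dimensions, so by \ref{omeara} and the lemma following it the sets $\mathcal{U}$ and $\mathcal{X}$ are bases of the respective $l$-dimensional residual spaces.

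Next I would confirm that these involutions are diagonal, which is what makes the earlier equality theorem applicable. Writing $\sigma = \tau_{\mathcal{U},a}$, a direct expansion gives $B(v,\sigma(v)) = \sum_i a_i B(u_i,v)^2$ for all $v$, using $B(v,v)=0$. Because the $u_i$ are linearly independent in the nonsingular space $V$, the tuple $(B(u_1,v),\dots,B(u_l,v))$ ranges over all of $k^{\times l}$ as $v$ varies, and since the $a_i$ are nonzero this expression is not identically zero; thus $\sigma$ is diagonal, and the same argument applies to $\tau_{\mathcal{X},b}$. The earlier characterization of equal diagonal involutions then yields that $\tau_{\mathcal{U},a}=\tau_{\mathcal{X},b}$ if and only if $(\mathcal{U},a)$ and $(\mathcal{X},b)$ are involution compatible. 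Unwinding that definition, this is precisely the conjunction that $\spa\{u_1,\dots,u_l\} = \spa\{x_1,\dots,x_l\}$ as $l$-dimensional singular subspaces together with the bilinear equivalence
\[ \left\langle \tfrac{1}{\q(u_1)}, \ldots, \tfrac{1}{\q(u_l)} \right\rangle_B \cong \left\langle \tfrac{1}{\q(x_1)}, \ldots, \tfrac{1}{\q(x_l)} \right\rangle_B. \]

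Finally I would invoke Proposition \ref{1_over_q_to_q} to replace the equivalence of the reciprocal-norm forms by the equivalence of the norm forms $\langle \q(u_1),\dots,\q(u_l)\rangle_B \cong \langle \q(x_1),\dots,\q(x_l)\rangle_B$, which is exactly the condition appearing in the statement; this step is reversible, giving both directions simultaneously. The main obstacle is not computational but structural: one must be sure the earlier diagonal-involution theorem genuinely applies, which requires verifying the diagonality just sketched and matching the hypothesis that the products are reduced to the requirement (built into involution compatibility) that $\mathcal{U}$ and $\mathcal{X}$ are bases of a common $l$-dimensional space. Once those two points are in place, the remainder is bookkeeping through \ref{1_over_q_to_q} and the definition of involution compatibility.
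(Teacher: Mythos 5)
Your reduction is sound, and it is organized genuinely differently from the paper's argument. The paper proves Lemma \ref{equal_tr} from scratch: it completes $\mathcal{U}$ to a symplectic pairing by vectors $v_i$ with $B(u_i,v_j)=\delta_{ij}$, evaluates both products at each $v_i$ to obtain $\frac{1}{\q(u_i)}u_i=\sum_j\frac{B(x_j,v_i)}{\q(x_j)}x_j$, reads off equality of the spans, and then extracts the explicit transition-matrix identity $A^T\left[\frac{1}{\q(u_i)}\right]A=\left[\frac{1}{\q(x_i)}\right]$ before invoking Proposition \ref{1_over_q_to_q}. You instead recognize the two products as the diagonal symplectic involutions $\tau_{\mathcal{U},a}$ and $\tau_{\mathcal{X},b}$ with $a_i=1/\q(u_i)$ and $b_i=1/\q(x_i)$, verify diagonality via $B(v,\sigma(v))=\sum_i a_iB(u_i,v)^2$ (a check the paper's self-contained proof never needs to make), quote the earlier theorem that equality of diagonal involutions is equivalent to involution compatibility, and finish with the same Proposition \ref{1_over_q_to_q}. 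Your route buys economy, since the completion-basis computation already lives inside the proof of the symplectic theorem and is not repeated; the paper's direct computation buys the explicit matrix identity, which is what Theorem \ref{conj_transv} later actually uses. Your appeal to \ref{omeara} to get linear independence of $\mathcal{U}$ and $\mathcal{X}$ from reducedness is correct and matches the paper's standing assumptions.

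The one step you should tighten is the unwinding ``this is precisely the conjunction'' of span equality and form equivalence. Involution compatibility is a condition on the \emph{specific} change-of-basis matrix $A$ between the two generating sets, namely Equation \ref{symp_inv_mat}, $A^T[a_i]A=[b_i]$; this implies $\langle a_1,\ldots,a_l\rangle_B\cong\langle b_1,\ldots,b_l\rangle_B$ but is strictly stronger, because an abstract congruence need not be realized by that particular $A$. Concretely, over $\mathbb{F}_4$ with $\omega^2=\omega+1$, take $B(u_i,v_j)=\delta_{ij}$, $\q(u_1)=1$, $\q(u_2)=\omega$, and set $x_1=u_1+u_2$, $x_2=u_2$, so $\q(x_1)=\omega^2$ and $\q(x_2)=\omega$; then $\spa\{x_1,x_2\}=\spa\{u_1,u_2\}$ and $\diag(\omega,1)^T\diag(1,\omega)\diag(\omega,1)=\diag(\omega^2,\omega)$ gives $\langle \q(u_1),\q(u_2)\rangle_B\cong\langle \q(x_1),\q(x_2)\rangle_B$, yet $\tau_{x_2}\tau_{x_1}(v_1)=v_1+\omega(u_1+u_2)\neq v_1+u_1=\tau_{u_2}\tau_{u_1}(v_1)$. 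So span equality plus form equivalence do not by themselves force equality of the involutions, and the passage from compatibility to form equivalence loses information. To be fair, you inherit this conflation from the paper itself, which makes the same identification in the remark following Equation \ref{symp_inv_mat} and leaves the ``if'' direction of Lemma \ref{equal_tr} unargued; but in your write-up the reliance is more exposed, because the symplectic theorem you cite outputs compatibility, not bare form equivalence. The clean repair is to state the equality criterion as involution compatibility of $(\mathcal{U},[1/\q(u_i)])$ and $(\mathcal{X},[1/\q(x_i)])$, and to treat the implication toward $\langle \q(u_1),\ldots,\q(u_l)\rangle_B\cong\langle \q(x_1),\ldots,\q(x_l)\rangle_B$ as one-directional.
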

\begin{proof}
Let $\{u_1, u_2, \ldots, u_l \}$ and $\{x_1,x_2, \ldots, x_l\}$ be sets of linearly independent mutually orthongonal vectors, none of which are in $\rad(V)$ and all of which have nonzero norm.  Now assume  $\tau_{u_l} \cdots \tau_{u_2} \tau_{u_1} = \tau_{x_l} \cdots \tau_{x_2} \tau_{x_1}$.  Then for each set of linearly independent vectors there exists a completion of the symplectic basis.   In particular there exists a set $\{v_1,v_2, \ldots, v_l\}$ of linearly independent vectors in $V$ such that $B(u_i, v_j) =1$ when $i=j$ and zero otherwise.  Notice that we can define $\tau_{u_i}$ by
\[ \tau_{u_i}(v_i) = v_i + \dfrac{B(u_i,v_i)}{\q(u_i)} u_i= v_i + \dfrac{1}{\q(u_i)} u_i, \]
and this transvection acts as the identity on every other basis vector.  Setting
\[  \tau_{u_l} \cdots \tau_{u_2} \tau_{u_1}(v_i) = \tau_{x_l} \cdots \tau_{x_2} \tau_{x_1} (v_i), \]
we arrive at the equation
\begin{equation}\label{equal_tr_eq}
\dfrac{1}{\q(u_i)} u_i = \dfrac{B(x_1,v_i)}{\q(x_1)} x_1 +  \dfrac{B(x_2,v_i)}{\q(x_2)} x_2 + \cdots +  \dfrac{B(x_l,v_i)}{\q(x_l)} x_l, 
\end{equation}
which tells us in particular that we can write each $u_i$ as a linear combination of $ \{x_1,x_2, \ldots, x_l\}$ and the two sets must span the same space.  Notice that since we can write each $x_j$ as a linear combination of $\{u_1, u_2, \ldots, u_l \}$ that the constants $B(x_j,v_i)=\alpha_{ij}$ are just the $i$-th component of $x_j$ written in the $u$-basis.  In other words we can write
\[ x_j = \alpha_{1j} u_1 + \alpha _{2j} u_2 + \cdots + \alpha_{lj}u_l. \]

Now let us assume that we can write each $u_i$ in the $x$-basis and set
\[ u_i = \beta_{i1}x_1 + \beta _{i2} x_2 + \cdots + \beta_{il}x_l. \]
Solving for $\beta_{ij}$ in terms of $\alpha$'s if $A = [\alpha_{ij}]_{1\leq i,j \leq l}$ we arrive at the condition
\[ A^T \left[ \dfrac{1}{\q(u_i)} \right] A = \left[ \dfrac{1}{\q(x_i)} \right]. \]
Then by Proposition \ref{1_over_q_to_q} we have the result.
\end{proof}

We will use the following result, which is Lemma 2.6 from \cite{hl04}.

\begin{lemma}\label{zero_cancel}
Let $\q$ and $\q'$ be nondefective quadratic forms of the same dimension.  If 
\[ \q \perp j \times \langle 0 \rangle \cong \q' \perp j \times \langle 0 \rangle, \]
then $\q \cong \q'$.
\end{lemma}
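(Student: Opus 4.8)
The plan is to reduce the entire statement to the uniqueness clause of Proposition \ref{hofflaghdecomp}, since the only content here is a cancellation of zero forms and the Hoffman--Laghribi decomposition already isolates the invariants that govern isometry. First I would record what the hypothesis of nondefectiveness buys us. The defect $\mathrm{def}(V)$ sits inside $\rad(V)$ and consists precisely of radical vectors of zero norm, so the assumption that $\rad(V)$ is anisotropic (or trivial) forces the defect of a nondefective form to vanish. Writing the canonical decompositions of Proposition \ref{hofflaghdecomp}, I therefore obtain
\[ \q \cong i \times \mathbb{H} \perp \widetilde{\q_0}, \qquad \q' \cong i' \times \mathbb{H} \perp \widetilde{\q_0'}, \]
where $\widetilde{\q_0} = \widetilde{\q_r}\perp\widetilde{\q_s}$ and $\widetilde{\q_0'} = \widetilde{\q_r'}\perp\widetilde{\q_s'}$ are anisotropic, and neither decomposition carries a $\langle 0\rangle$ summand.

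Next I would append the zero forms. Adding $j \times \langle 0\rangle$ to each side gives
\[ \q \perp j \times \langle 0\rangle \cong i \times \mathbb{H} \perp \widetilde{\q_0} \perp j \times \langle 0\rangle, \]
and likewise for $\q'$. The point is that the right-hand side already satisfies the hypotheses of Proposition \ref{hofflaghdecomp}: $i\times\mathbb{H}$ is the hyperbolic part, $\widetilde{\q_0}$ is anisotropic with its prescribed nonsingular and totally singular pieces, and $j\times\langle 0\rangle$ is exactly the zero part. Hence it is \emph{the} canonical decomposition of $\q\perp j\times\langle 0\rangle$, with Witt index $i$, defect $j$, and anisotropic part $\widetilde{\q_0}$. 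The newly adjoined $\langle 0\rangle$ summands cannot merge with a hyperbolic plane, whose bilinear form is nondegenerate, nor can they be absorbed into the anisotropic part; so the invariants of $\q$ are genuinely unchanged except that the defect rises from $0$ to $j$.

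Finally I would invoke uniqueness. By hypothesis $\q\perp j\times\langle 0\rangle \cong \q'\perp j\times\langle 0\rangle$, so the uniqueness clause of Proposition \ref{hofflaghdecomp} forces $i = i'$ together with an isometry $\widetilde{\q_0}\cong\widetilde{\q_0'}$ of the anisotropic cores. Substituting these back into the decompositions of $\q$ and $\q'$ yields $\q \cong \q'$ directly, with no need to separate $\widetilde{\q_r}$ from $\widetilde{\q_s}$. The only step demanding real care is the middle one: confirming that appending $j\times\langle 0\rangle$ produces a \emph{bona fide} Hoffman--Laghribi decomposition, so that uniqueness is applicable. This in turn rests entirely on the two observations above, namely that nondefectiveness makes the original defect vanish and that zero vectors lie in the radical of the bilinear form and hence cannot be recruited into hyperbolic planes.
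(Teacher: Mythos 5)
Your proof is correct, and the comparison here is somewhat one-sided: the paper does not prove this lemma at all, but simply imports it as Lemma 2.6 of \cite{hl04}, so your argument supplies a proof where the paper gives only a citation. Your mechanism is the natural one and it works: by the paper's definitions, nondefective means $\rad(V)$ is anisotropic or trivial, so the zero-norm subspace of the radical --- which is what the $j \times \langle 0 \rangle$ block in Proposition \ref{hofflaghdecomp} records --- vanishes, giving $\q \cong i \times \mathbb{H} \perp \widetilde{\q_0}$ and $\q' \cong i' \times \mathbb{H} \perp \widetilde{\q_0'}$ with anisotropic cores and no zero part; appending $j \times \langle 0 \rangle$ then produces decompositions of the two (isometric) padded forms satisfying every hypothesis of Proposition \ref{hofflaghdecomp}, and since the uniqueness clause applies to \emph{any} decomposition of that shape, it forces $i = i'$ and $\widetilde{\q_0} \cong \widetilde{\q_0'}$, hence $\q \cong \q'$. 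Two small observations. First, your argument in fact proves more than the stated lemma: if $\q$ and $\q'$ were allowed defects $j_0$ and $j_0'$, the same uniqueness argument gives $i = i'$, isometric anisotropic cores, and $j_0 + j = j_0' + j$, hence $j_0 = j_0'$ and again $\q \cong \q'$; so nondefectiveness (and likewise the equal-dimension hypothesis, which follows automatically once the invariants match) is not actually needed for cancellation --- it merely spares you from tracking the original defect. Second, your middle paragraph worrying about whether the adjoined $\langle 0 \rangle$ summands could ``merge'' with hyperbolic planes or be absorbed into the anisotropic part is dispensable: the uniqueness clause is stated for all decompositions meeting the listed conditions, so verifying those conditions is all that is required, which you have already done.
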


The following is a Gram-Schmidt type theorem for characteristic 2.

\begin{lemma}\label{gramschmidt_char2}
Let $V$ be a symplectic space of dimension $2r$.  Given $\{e_1, e_2, ... , e_r \} \subset V$, a linearly independent set of vectors such that $e_i \perp e_j$, there exists $\{e_1^{\prime}, f_1,e_2^{\prime}, f_2, ... , e_r^{\prime}, f_r\} \subset V$ such that $B(e_i^{\prime},f_j) = \delta_{ij}, and \ B(f_i,f_j) = B(e_i^{\prime},e_j^{\prime})=0$.
\end{lemma}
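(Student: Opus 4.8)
The plan is to prove the statement by induction on $r$, constructing a hyperbolic (symplectic) basis one plane at a time, and to show that one may in fact take $e_i' = e_i$. The starting observation is that a symplectic form in characteristic $2$ is alternating, so $B(e_i,e_i) = 0$ automatically; together with the hypothesis $e_i \perp e_j$ for $i \neq j$ this means $L = \spa\{e_1,\dots,e_r\}$ is a totally isotropic subspace of dimension $r$, i.e. a Lagrangian in the $2r$-dimensional space $V$. This reframing is what lets the induction cut off nondegenerate hyperbolic planes without destroying the isotropy of what remains.

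For the base case $r = 1$, nondegeneracy of $B$ gives a vector $w$ with $B(e_1,w) \neq 0$; rescaling, $f_1 = B(e_1,w)^{-1} w$ satisfies $B(e_1,f_1) = 1$, and the remaining conditions $B(e_1,e_1) = B(f_1,f_1) = 0$ hold because $B$ is alternating. For the inductive step I would first produce a single partner $f_1$ for $e_1$ that is invisible to the other $e_j$: since $B$ is nondegenerate, the map $v \mapsto B(v,-)$ is an isomorphism $V \to V^*$, so the functionals $B(e_1,-),\dots,B(e_r,-)$ are linearly independent. Hence there exists $f_1 \in V$ with $B(e_1,f_1) = 1$ and $B(e_j,f_1) = 0$ for all $j \geq 2$, obtained by taking $f_1$ from a dual basis.

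Now set $H_1 = \spa\{e_1,f_1\}$. Because $B(e_1,f_1) = 1 \neq 0$, $H_1$ is a nondegenerate plane, so $V = H_1 \perp H_1^{\perp}$ with $H_1^{\perp}$ a nondegenerate symplectic space of dimension $2(r-1)$. The choice of $f_1$ is exactly what forces $e_2,\dots,e_r \in H_1^{\perp}$: each $e_j$ with $j \geq 2$ satisfies $B(e_j,e_1) = 0$ (orthogonality of the $e_i$) and $B(e_j,f_1) = 0$ (construction of $f_1$). These $r-1$ vectors remain linearly independent and mutually orthogonal, hence form a Lagrangian set inside $H_1^{\perp}$, and the induction hypothesis yields $f_2,\dots,f_r \in H_1^{\perp}$ with $B(e_i,f_j) = \delta_{ij}$, $B(f_i,f_j) = 0$, and $B(e_i,e_j) = 0$ for $i,j \in \{2,\dots,r\}$.

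Combining the two pieces gives the desired basis with $e_i' = e_i$: all pairings among indices $\geq 2$ are supplied by the induction, $B(e_1,f_1) = 1$ and $B(e_1,e_j) = B(e_j,f_1) = 0$ by construction, and every mixed pairing such as $B(e_1,f_j)$ or $B(f_1,f_j)$ with $j \geq 2$ vanishes because $e_1,f_1 \in H_1$ while $f_j \in H_1^{\perp}$. I expect the main obstacle to be the bookkeeping of the cross terms $B(f_i,f_j)$ and $B(e_i,f_j)$ across the two blocks of indices; the orthogonal decomposition $V = H_1 \perp H_1^{\perp}$ is precisely the device that makes these vanish for free, so the only genuinely substantive step is the dual-basis argument producing an $f_1$ orthogonal to $e_2,\dots,e_r$.
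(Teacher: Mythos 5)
Your proof is correct, and it establishes a slightly sharper conclusion than the lemma asks for. Both arguments are the same induction --- split off one hyperbolic plane and pass to its orthogonal complement --- but the key step is handled differently. The paper chooses an \emph{arbitrary} $f_1$ with $B(e_1,f_1)=\alpha \neq 0$ and then repairs the remaining vectors via the Gram--Schmidt-type correction $e_i' = e_i + \frac{B(e_i,f_1)}{\alpha}\,e_1$ for $i \geq 2$ (these stay mutually orthogonal because $e_1 \perp e_i$), so the given vectors get modified and $e_1$ gets rescaled to $\frac{1}{\alpha}e_1$; that correction formula is exactly why the paper bills the lemma as a Gram--Schmidt analogue. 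You instead put all the work into selecting $f_1$: nondegeneracy makes $v \mapsto B(v,-)$ an isomorphism $V \to V^{*}$, so the functionals $B(e_1,-), \ldots, B(e_r,-)$ are linearly independent and you can solve $B(e_1,f_1)=1$, $B(e_j,f_1)=0$ for $j \geq 2$ outright, after which $e_2, \ldots, e_r$ already lie in $\spa\{e_1,f_1\}^{\perp}$ and no correction is needed. What your route buys is the normalization $e_i' = e_i$: any linearly independent totally isotropic set extends on the nose to a symplectic basis, with the rescaling absorbed into $f_1$ rather than $e_1$. What the paper's route buys is a more algorithmic flavor --- any vector not orthogonal to $e_1$ serves as $f_1$, with an explicit correction formula --- at the price of altering the given vectors. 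Your cross-term bookkeeping via $V = H_1 \perp H_1^{\perp}$ is sound, and note that the statement you feed to the induction is the strengthened one (with $e_i' = e_i$), which is self-consistent since that is what you prove at every stage.
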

\begin{proof}
Choose $f_1 \in V$ such that $B(e_1,f_1) = \alpha \neq 0$.  Define $e_1^{\prime} = \frac{1}{\alpha} e_1$ and $e_i^{\prime} = e_i + \frac{B(e_i,f_1)}{\alpha} e_1$ for $i \in \{ 2, 3, ..., r \}$, so that $B(f_1, e_j^{\prime}) = \delta_{1j}$.  Then $V = \langle e_1^{\prime}, f_1 \rangle \perp V^{\prime}$, where $\dim(V^{\prime}) < \dim(V)$, and induction establishes the result.
\end{proof}

\begin{thm} \label{conj_transv}
Let $\tau_{u_l} \cdots \tau_{u_2} \tau_{u_1}$ and $\tau_{x_l} \cdots \tau_{x_2} \tau_{x_1}$ be orthogonal diagonal involutions on $V$ such that $\phi \in \Orth(\q,k)$.  Then 
\[ \phi \tau_{u_l} \cdots \tau_{u_2} \tau_{u_1}\phi^{-1} = \tau_{x_l} \cdots \tau_{x_2} \tau_{x_1} \]
if and only if 
\[ \left\langle \q(u_1), \q(u_2), \ldots, \q(u_l) \right\rangle_B \cong  \left\langle \q(x_1), \q(x_2), \ldots, \q(x_l) \right\rangle_B. \]
\end{thm}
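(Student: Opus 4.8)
The plan is to translate the conjugacy condition into an \emph{equality} of reduced transvection products, to which Lemma~\ref{equal_tr} applies, and then to supply its two hypotheses—equal spans and equivalence of the induced diagonal bilinear forms—separately. Write $U=\spa\{u_1,\dots,u_l\}$ and $X=\spa\{x_1,\dots,x_l\}$; because each inducing vector has nonzero norm and the vectors in each family are mutually orthogonal, $U$ and $X$ are $l$-dimensional totally singular subspaces (that is, $B$ vanishes on them) and both products are reduced of length $l$. The observation that drives both directions is that every $\phi\in\Orth(\q,k)$ preserves norms, so $\q(\phi(u_i))=\q(u_i)$; hence $\langle \q(\phi(u_1)),\dots,\q(\phi(u_l))\rangle_B$ is literally $\langle \q(u_1),\dots,\q(u_l)\rangle_B$, independent of the choice of $\phi$.

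For the forward direction I would use Proposition~\ref{conj_beh} to rewrite $\phi\,\tau_{u_l}\cdots\tau_{u_1}\,\phi^{-1}$ as $\tau_{\phi(u_l)}\cdots\tau_{\phi(u_1)}$. The hypothesis becomes an equality $\tau_{\phi(u_l)}\cdots\tau_{\phi(u_1)}=\tau_{x_l}\cdots\tau_{x_1}$ of reduced diagonal involutions, so Lemma~\ref{equal_tr} gives $\langle \q(\phi(u_1)),\dots,\q(\phi(u_l))\rangle_B\cong\langle \q(x_1),\dots,\q(x_l)\rangle_B$; replacing $\q(\phi(u_i))$ by $\q(u_i)$ yields the asserted bilinear equivalence.

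For the converse I assume $\langle \q(u_1),\dots,\q(u_l)\rangle_B\cong\langle \q(x_1),\dots,\q(x_l)\rangle_B$ and must manufacture one $\phi\in\Orth(\q,k)$ with $\phi(U)=X$. Granting such a $\phi$, Lemma~\ref{equal_tr} finishes: the equal spans $\spa\{\phi(u_i)\}=X=\spa\{x_i\}$ together with the assumed bilinear equivalence (which, by the remark above, is exactly the condition on the $\q(\phi(u_i))$) force $\tau_{\phi(u_l)}\cdots\tau_{\phi(u_1)}=\tau_{x_l}\cdots\tau_{x_1}$, and Proposition~\ref{conj_beh} turns this back into the conjugacy relation. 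To build $\phi$ I first pass from the bilinear hypothesis to the quadratic equivalence $\langle \q(u_1),\dots,\q(u_l)\rangle\cong\langle \q(x_1),\dots,\q(x_l)\rangle$ via Corollary~\ref{invcomp_norm}; since $B$ vanishes on $U$ and $X$ these diagonal forms are precisely $\q\vert_U$ and $\q\vert_X$, so there is an isometry $\psi\colon U\to X$. I then extend $\psi$ to all of $V$: using Lemma~\ref{gramschmidt_char2} I complete the basis of $U$ to a symplectic basis of a nonsingular $2l$-dimensional subspace $W_U$, so that $V=W_U\perp V'$; applying Lemma~\ref{gramschmidt_char2} again inside $V$ I complete $X$ similarly, adjusting the new vectors by multiples of the $\psi(u_i)$ so as to match norms and restore mutual orthogonality, which extends $\psi$ to an isometry from $W_U$ onto a nonsingular subspace $W_X$ containing $X$; finally Witt cancellation for nonsingular forms (valid in characteristic two) shows the orthogonal complements $V'$ and $W_X^{\perp}$ are isometric, so $\phi$ extends across them.

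The main obstacle is precisely this last extension, a Witt-type extension in characteristic two for an isometry between the totally singular subspaces $U$ and $X$. Two points need care. First, the completions furnished by Lemma~\ref{gramschmidt_char2} alter the inducing vectors, so one must verify that the norms and the mutual orthogonality of the images can be restored simultaneously by solving the relevant equations, which are linear in the diagonal correction coefficients and hence solvable. Second, one must cancel only \emph{nonsingular} complements, where Witt cancellation holds; the singular analogue is exactly the content of Lemma~\ref{zero_cancel}, needed in the defective setting but avoidable here. It is worth flagging that $\phi$ is produced using only the weaker quadratic equivalence, whereas the conjugacy conclusion genuinely requires the stronger bilinear equivalence fed into Lemma~\ref{equal_tr}; this is consistent with the counterexample following Corollary~\ref{invcomp_norm}, where quadratic equivalence holds but bilinear equivalence, and hence conjugacy, fails.
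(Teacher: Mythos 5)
Your proof has the same skeleton as the paper's: the forward direction via Proposition \ref{conj_beh}, Lemma \ref{equal_tr}, and the identity $\q(\phi(u_i))=\q(u_i)$ is exactly the paper's argument, and in the converse the paper likewise passes from the bilinear hypothesis to the quadratic equivalence, asserts the existence of $\phi\in\Orth(\q,k)$ with $\phi(\spa\mathcal{U})=\spa\mathcal{X}$, and feeds the bilinear equivalence back into Lemma \ref{equal_tr}; your closing remark on the quadratic-versus-bilinear asymmetry is precisely the paper's logic, consistent with the counterexample after Corollary \ref{invcomp_norm}. The one place you go beyond the paper --- which states the existence of $\phi$ with no justification --- is your explicit construction of the extension, and that construction contains a genuine error.

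You claim the norm-matching conditions on the completed basis vectors are \emph{linear} in the correction coefficients and hence solvable. They are not: if $(\psi(u_i),w_i)$ is a hyperbolic pair with $B(\psi(u_i),w_i)=1$, then $\q(w_i+c\,\psi(u_i))=\q(w_i)+c^2\q(u_i)+c$, so matching $\q(v_i)$ forces the Artin--Schreier-type equation $c^2\q(u_i)+c=\q(v_i)+\q(w_i)$, and adding multiples of the other $\psi(u_j)$ contributes only the square terms $c_j^2\q(u_j)$ since $B(v_i,\psi(u_j))=0$. Such equations need not be solvable in $k$: with $\q(u_i)=1$ and $k=\mathbb{F}_2(t)$ you would need $c^2+c=t$, which has no root in $k$. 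Equivalently, your plane-by-plane strategy would require $[\q(u_i),\q(v_i)]\cong[\q(u_i),\q(w_i)]$ for each $i$, which the two Gram--Schmidt completions need not satisfy. What is actually needed (by you \emph{and} by the paper, which leaves this tacit) is the Witt extension theorem for nonsingular quadratic spaces in characteristic $2$: any isometry between the $B$-totally isotropic subspaces $U$ and $X$ extends to an element of $\Orth(\q,k)$. Its proof proceeds one vector at a time rather than by a single basis adjustment --- if $\q(u)=\q(x)$ and $B(u,x)\neq 0$, then $\q(u+x)=B(u,x)\neq 0$ and the transvection $\tau_{u+x}$ carries $u$ to $x$, with an intermediate vector when $B(u,x)=0$ --- followed by an induction that fixes the vectors already matched, with Witt cancellation disposing of the complements as you suggest. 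With that theorem cited or proved, your converse (and the paper's) is complete; as written, your adjustment step fails.
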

\begin{proof}
First notice that the above condition is stronger than the two spaces having isometric norms.  Recall from Proposition \ref{conj_beh} that we have
\[ \phi \tau_{u_l} \cdots \tau_{u_2} \tau_{u_1}\phi^{-1} = \tau_{\phi(u_l)} \cdots \tau_{\phi(u_2)} \tau_{\phi(u_1)}. \]
If we assume that the two involutions are $\Orth(\q,k)$-conjugate we have
\[ \tau_{\phi(u_l)} \cdots \tau_{\phi(u_2)} \tau_{\phi(u_1)}  = \tau_{x_l} \cdots \tau_{x_2} \tau_{x_1}, \]
and so
\[ \left\langle \q(\phi(u_1)), \q(\phi(u_2)), \ldots, \q(\phi(u_l)) \right\rangle_B \cong  \left\langle \q(x_1), \q(x_2), \ldots, \q(x_l) \right\rangle_B \]
and
\[ \left\langle \q(u_1), \q(u_2), \ldots, \q(u_l) \right\rangle_B =\left\langle \q(\phi(u_1)), \q(\phi(u_2)), \ldots, \q(\phi(u_l)) \right\rangle_B . \]

Now let us assume $\langle \q(u_1), \q(u_2), \ldots, \q(u_l) \rangle_B \cong \langle \q(x_1), \q(x_2), \ldots, \q(x_l) \rangle_B$.  Then $\langle \q(u_1), \q(u_2), \ldots, \q(u_l) \rangle \cong \langle \q(x_1), \q(x_2), \ldots, \q(x_l) \rangle$ and there exists a map $\phi \in \Orth(\q,k)$ such that $\phi(\spa \mathcal{U}) = \spa \mathcal{X}$, where $\mathcal{U} = \{u_1, u_2, \ldots, u_l \} \text{ and }  \mathcal{X} =  \{x_1,x_2, \ldots, x_l\}$.  We already know that the equivalent bilinear form condition is met so by Lemma \ref{equal_tr} we have that 
\[ \tau_{\phi(u_l)} \cdots \tau_{\phi(u_2)} \tau_{\phi(u_1)}  = \tau_{x_l} \cdots \tau_{x_2} \tau_{x_1}, \]
and so the two involutions are conjugate.
\end{proof}

\subsection{Null Involutions}

In this section we discuss involutions of the second type in Theorem \ref{wiitala}.  This definition can also be found in \cite{sn89}.  We note that basic null involutions are hyperbolic, in the sense of \cite{om78}.

\begin{definition} A plane $P = \spa\{ e, f \}$ is hyperbolic (or Artinian) if both of the following are satisfied:
	\begin{enumerate}
		\item $\q(e) = \q(f) = 0$ 
		\item $B(e,f) \neq 0.$
	\end{enumerate}
\end{definition}

If $e, f$ span a hyperbolic plane, we can rescale to assume $B(e,f) =1$.  Proposition 188.2 of \cite{sn89} guarantees that every nonsingular nonzero isotropic vector is contained in a hyperbolic plane.  
\begin{definition}
Let $\eta$ be an involution of $\Orth(\q,k)$ where $(V,\q)$ is a quadratic space, and let $\mathbb{P}$ be the orthogonal sum of two hyperbolic planes.  Then $\eta$ is called a basic null involution in $\mathbb{P}$ if all of the following are satisfied:
	\begin{enumerate}
		\item $\eta$ leaves $\mathbb{P}$ invariant
		\item $\eta$ fixes a 2-dimensional subspace of $\mathbb{P}$ where every vector has norm zero
		\item $\eta \vert_{\mathbb{P}^C} = \id_{\mathbb{P}^C}$, where $\mathbb{P}^C$ is the complement of $\mathbb{P}$ in $V$.
	\end{enumerate}
\end{definition}

An automorphism $\eta$ is a basic null involution in $\mathbb{P}$ if and only if there exists a basis with respect to which the matrix of $\eta$ is a pair of $2 \times 2$ Jordan blocks, all eigenvalues are 1 and acts as the identity elsewhere.  This happens precisely when there is a basis $\{ e_1 , f_2, e_2, f_1 \}$ for $A$ such that $B(e_i ,f_i) =1$, $\eta(e_i) = e_i$, $\eta(f_1) = e_2 + f_1$, and $\eta(f_2) = e_1 + f_2$.  

\begin{prop} \label{conj_null}
Two null involutions are $\Orth(\q,k)$-conjugate if and only if they have the same length.
\end{prop}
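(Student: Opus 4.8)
The plan is to prove both implications by controlling the residual space of a null involution and by showing that a null involution of a fixed length admits a canonical model to which every such involution is carried by an isometry, in direct analogy with the transvection computation of Proposition \ref{conj_beh}.

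\textbf{Forward direction.} I would write $\eta = \eta_1 \cdots \eta_m$ as a reduced product of basic null involutions acting on mutually orthogonal copies $\mathbb{P}_1, \ldots, \mathbb{P}_m$ of $\mathbb{H} \perp \mathbb{H}$, with $\eta$ the identity on $W := (\mathbb{P}_1 \perp \cdots \perp \mathbb{P}_m)^\perp$; here $m$ is the length. From the defining basis $\{e_1^{(i)}, f_2^{(i)}, e_2^{(i)}, f_1^{(i)}\}$ of each $\mathbb{P}_i$ one reads off $(\eta_i - \id)\mathbb{P}_i = \spa\{e_1^{(i)}, e_2^{(i)}\}$, so $R_\eta$ is the totally isotropic span of all the $e_j^{(i)}$ and $\mathrm{res}(\eta) = 2m$. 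For $\phi \in \Orth(\q,k)$ one has $(\phi\eta\phi^{-1} - \id)v = \phi(\eta - \id)\phi^{-1}v$, whence $R_{\phi\eta\phi^{-1}} = \phi(R_\eta)$; thus conjugation preserves $\mathrm{res}$, and since $\mathrm{res} = 2\,(\text{length})$ it preserves the length. This gives one implication.

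\textbf{Reverse direction.} Suppose $\eta$ and $\eta'$ are null involutions of the same length $m$, with the analogous decompositions into $\mathbb{P}_i$ with complement $W$ and into $\mathbb{P}_i'$ with complement $W'$; write $\tilde e_j^{(i)}, \tilde f_j^{(i)}$ for the distinguished basis of $\eta'$. I would first note that $H := \mathbb{P}_1 \perp \cdots \perp \mathbb{P}_m$ and $H' := \mathbb{P}_1' \perp \cdots \perp \mathbb{P}_m'$ are each isometric to $2m \times \mathbb{H}$, and that the distinguished bases furnish an explicit isometry $\psi \colon H \to H'$ sending $e_j^{(i)} \mapsto \tilde e_j^{(i)}$ and $f_j^{(i)} \mapsto \tilde f_j^{(i)}$. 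Because $V = H \perp W = H' \perp W'$ with $H \cong H'$, I obtain an isometry $\rho \colon W \to W'$ (see the obstacle below). Letting $\phi$ be $\psi$ on $H$ and $\rho$ on $W$ defines an element of $\Orth(\q,k)$, since for $h \in H$, $w \in W$ one has $\q(\phi(h+w)) = \q(\psi h) + \q(\rho w) = \q(h) + \q(w) = \q(h+w)$ by orthogonality. Conjugation by $\phi$ carries the distinguished basis of $\eta$ to that of $\eta'$ on $H'$ and matches the identity action on $W'$, so by the basiswise description of a basic null involution, $\phi\eta\phi^{-1} = \eta'$.

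\textbf{Main obstacle.} The delicate step is producing $\rho \colon W \to W'$, i.e. cancelling the $2m$ hyperbolic planes of $H$ and $H'$ from $V$, which in characteristic $2$ requires care. I would route this through the uniqueness of the Witt decomposition in Proposition \ref{hofflaghdecomp}: since $R_\eta$ is totally isotropic of dimension $2m$, the Witt index $i$ of $V$ satisfies $i \geq 2m$, and removing $H$ (respectively $H'$) leaves $(i-2m)\times\mathbb{H}$ together with the common anisotropic and defect parts, which are determined up to isometry, giving $W \cong W'$. Lemma \ref{gramschmidt_char2} supplies the hyperbolic completions needed to write the distinguished bases cleanly, and Lemma \ref{zero_cancel} lets one strip off the defect before comparing anisotropic parts if necessary. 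Once $W \cong W'$ is secured, the verification that $\phi$ conjugates $\eta$ to $\eta'$ is a routine check on the distinguished basis.
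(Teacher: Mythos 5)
Your proof is correct and follows essentially the same route as the paper: decompose each null involution into basic null involutions on mutually orthogonal $[0,0]\perp[0,0]$ blocks, match those blocks by an isometry extended by an isometry of the complements, and rule out differing lengths via a $\GL(V)$-conjugacy invariant (your residual-space count makes precise the paper's one-line remark). The only difference is one of completeness: the paper simply asserts that one may ``choose $\phi \in \Orth(\q,k)$ such that $\phi: N_i \to M_i$,'' whereas your Witt-decomposition argument via Proposition \ref{hofflaghdecomp}, producing the complementary isometry $\rho: W \to W'$, supplies exactly the extension step the paper leaves implicit.
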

\begin{proof}
Let $\eta_k \cdots \eta_2 \eta_1$ be a null involution on $V$ where $\eta_i$, $1 \leq i \leq k$ are basic null involutions. Then each $\eta_i$ corresponds to a four dimensional space made up of two perpendicular hyberbolic planes.  In other words for each $\eta_i$ there exists a subspace $N_i$ such that $\q\vert_{N_i} \sim [0,0]\perp[0,0]$.  Similarly, let $\mu_k \cdots \mu_2\mu_1$ be a null involution whose basic null involutions $\mu_i$ have corresponding four dimensional hyperbolic subspaces $M_i$ such that $\q\vert_{M_i} \sim [0,0]\perp[0,0]$.  If we choose $\phi \in \Orth(\q,k)$ such that $\phi: N_i \to M_i$, then
\[ \phi \mu_k \cdots \mu_2\mu_1 \phi^{-1} = \eta_k \cdots \eta_2 \eta_1. \] 
If two null involutions do not have the same length they are not $\GL(V)$-conjugate.
\end{proof}

We recall from \cite{wi78} that if a map is a product of an othogonal transvection and a basic null involution, then it is also the product of three orthogonal transvections.  

\subsection{Radical Involutions}

In this section we characterize the involutions acting in the radical of $V$.  Recall the bilinear from is identically zero here.  First let us consider the following result.

\begin{prop} \label{Oqk_radV}
$\Orth(\q|_{\rad(V)},k) \cong \GL_j(k) \ltimes \Mat_{j,s-j}(k)$ where $j$ is the defect of $\rad(V)$.
\end{prop}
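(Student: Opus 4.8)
The plan is to realize $\Orth(\q|_{\rad(V)},k)$ explicitly as a group of matrices in a basis adapted to the defect subspace, and then read off the semidirect-product structure from the block form. Write $R=\rad(V)$ and recall that the bilinear form vanishes identically on $R$. The defining identity $\q(w+w')=\q(w)+\q(w')+B(w,w')$ then shows that $\q|_R$ is additive, and together with $\q(aw)=a^2\q(w)$ this makes $\q|_R$ behave like a Frobenius-semilinear functional. In particular $N=\{w\in R:\q(w)=0\}$ is a $k$-subspace; by Proposition \ref{hofflaghdecomp} it is exactly $N=\mathrm{def}(V)$ of dimension $j$, and on a complement the form is anisotropic and totally singular, diagonalizable as $\langle c_{j+1},\ldots,c_s\rangle$ with the $c_i$ linearly independent over $k^2$ (the cross terms vanish since $B\equiv 0$ on $R$).

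First I would observe that every $\varphi\in\Orth(\q|_R,k)$ preserves $N$: if $\q(w)=0$ then $\q(\varphi(w))=\q(w)=0$, and invertibility of $\varphi$ forces $\varphi(N)=N$. Fixing a basis $g_1,\ldots,g_j$ of $N$ followed by $g_{j+1},\ldots,g_s$ spanning the anisotropic complement, $\varphi$ therefore has block form $\begin{pmatrix} A & C\\ 0 & D\end{pmatrix}$ with $A\in\GL_j(k)$, $C\in\Mat_{j,s-j}(k)$, and $D$ acting on the complement; the lower-left block is zero precisely because $\varphi(N)=N$.

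The key step is to show $D=I$. Writing $\varphi(g_m)=\sum_n\beta_{nm}g_n$ and using that $x\mapsto x^2$ is additive in characteristic $2$, the condition $\q(\varphi(w))=\q(w)$ expands, after comparing the coefficients of each $z_m^2$, into $\sum_{n>j}c_n\beta_{nm}^2=c_m$ for $m>j$ (and $0=0$ for $m\le j$, since the lower-left block vanishes). As each $\beta_{nm}^2\in k^2$ and the $c_n$ with $n>j$ are linearly independent over $k^2$, this forces $\beta_{nm}^2=\delta_{nm}$, and uniqueness of square roots in characteristic $2$ yields $\beta_{nm}=\delta_{nm}$; hence $D=I$. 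Because the $m\le j$ equations are automatic, neither $A$ nor $C$ is constrained beyond $A$ being invertible, so $\Orth(\q|_R,k)$ is exactly the set of matrices $\begin{pmatrix} A & C\\ 0 & I\end{pmatrix}$ with $A\in\GL_j(k)$, $C\in\Mat_{j,s-j}(k)$. I expect this triviality of the orthogonal group of an anisotropic totally singular form to be the main obstacle, since it is the single place where the characteristic-$2$ structure (independence over $k^2$ together with unique square roots) does the real work.

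Finally I would compute the group law $\begin{pmatrix} A & C\\ 0 & I\end{pmatrix}\begin{pmatrix} A' & C'\\ 0 & I\end{pmatrix}=\begin{pmatrix} AA' & AC'+C\\ 0 & I\end{pmatrix}$, which exhibits the matrices $\begin{pmatrix} I & C\\ 0 & I\end{pmatrix}$ as a normal abelian subgroup isomorphic to the additive group $\Mat_{j,s-j}(k)$, normalized by $\GL_j(k)=\left\{\begin{pmatrix} A & 0\\ 0 & I\end{pmatrix}\right\}$ acting by left multiplication. This is precisely the semidirect product $\GL_j(k)\ltimes\Mat_{j,s-j}(k)$, completing the proof.
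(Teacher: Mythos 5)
Your proposal is correct and follows essentially the same route as the paper's proof: fix a basis adapted to the decomposition $\rad(V)=\mathrm{def}(V)\oplus\mathrm{def}(V)'$, show every isometry preserves the norm-zero subspace (killing the lower-left block), show it acts as the identity on the anisotropic totally singular complement, and read off the semidirect product from the block multiplication law. The only cosmetic difference is at the identity-on-complement step, where the paper invokes anisotropy directly (from $\q(y)=\q(\psi(y))$ and additivity of $\q$ on the radical one gets $\q(y+\psi(y))=0$, hence $\psi(y)=y$), while you unpack the same fact as $k^2$-linear independence of the diagonal entries $c_{j+1},\ldots,c_s$ together with uniqueness of square roots in characteristic $2$ --- equivalent arguments, with yours slightly more computational.
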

\begin{proof}

By Proposition \ref{hofflaghdecomp} every norm on $\rad(V)$ is isometric to
\[ \langle 0, \cdots, 0, c_{j+1}, \cdots, c_s \rangle, \]
where $j$ is the defect of $\q$ and $\dim (\rad(V)) = s$.  Now the subform
\[ \langle c_{j+1}, \cdots, c_s \rangle, \]
is anisotropic.  We can choose a basis
\[ \mathcal{R} = \{ g_1, g_2, \ldots, g_j, g_{j+1}, g_{j+2}, \ldots, g_s \} \]
of $\textrm{rad}(V)$ such that
\[ \q(g_i) = 0 \text{ for } 1 \leq i \leq j \]
and
\[ \q(g_i) = c_i \text{ for } j+1 \leq i \leq s. \]
Let us denote the vector space spanned by the basis vectors of $\mathcal{R}$ with nonzero norms by
\[ \text{span}\{g_{j+1}, g_{j+2}, \ldots, g_s \} = \textrm{def}(V)_{\mathcal{R}}'. \]
 If $\phi \in \Orth(\q\vert_{\rad(V)},k)$  then the image of $\phi$ is defined by the four linear maps $\chi: \textrm{def}(V) \to \textrm{def}(V)$, $M: \textrm{def}(V)_{\mathcal{R}}' \to \textrm{def}(V)$, $N: \textrm{def}(V) \to \textrm{def}(V)_{\mathcal{R}}'$ and $\psi : \textrm{def}(V)_{\mathcal{R}}' \to \textrm{def}(V)_{\mathcal{R}}'$.  Let $x\in \textrm{def}(V)$ then $\phi(x) = \chi(x) + Nx$ where $Nx \in \textrm{def}(V)_{\mathcal{R}}'$.  Also $\q(\phi(x)) = \q(x) = 0$, so
\[ \q(\chi(x) + Nx) = \q(\chi(x)) + \q(Nx) = 0. \]
Now $\chi(x) \in \textrm{def}(V)$ so $\q(\chi(x) ) = 0$.  Leaving $\q(Nx) = 0$.  There are no nontrivial vectors in $\textrm{def}(V)_{\mathcal{R}}'$ such that $\q(Nx) = 0$ therefore $N=0$.  In general we require $\chi \in \textrm{GL}_j(k)$ such that $\q(\chi(x))=0$, but $\q$ is identically zero, so $\chi$ can be any element of $\textrm{GL}_j(k)$.

Now consider $y \in \textrm{def}(V)_{\mathcal{R}}'$.  If $\phi(y) = My + \psi(y)$, then 
\begin{align*}
\q(\phi(y)) &= \q(My + \psi(y)) \\
&= \q(My) + \q(\psi(y))
\end{align*}
The vector $My \in \textrm{def}(V)$ so $\q(My) = 0$ and we have $\q(y) = \q(\psi(y))$ for $\psi(y) \in \textrm{def}(V)_{\mathcal{R}}'$, but this means $\psi(y) = y$ for all $y \in \textrm{def}(V)_{\mathcal{R}}'$.  In other words $\psi = \id$.  We end up with $\phi(y) = y + My$ and since $\q(My)=0$ for any $M$, the map $M$ can be any element of $\Mat_{j,s-j}(k)$.

Consider two elements $\phi_1, \phi_2 \in \Orth(\q\vert_{\rad(V)},k)$ defined by maps $\chi_1, M_1$ and $\chi_2, M_2$ respectively.   Any element in $\rad(V)$ can be written as $x+y \in \rad(V) = \textrm{def}(V) \oplus \textrm{def}(V)_{\mathcal{R}}'$ where $x \in \textrm{def}(V)$ and $y \in \textrm{def}(V)_{\mathcal{R}}'$. We see that
\[ \phi_1\phi_2(x+y) = \chi_1\chi_2(x) + y + (M_1 + \chi_1M_2)y. \]
This is equivalent to the action of the block matrices acting on $\rad(V)$ so we have defined an isomorphism 
\[ \Psi:\Orth(\q\vert_{\rad(V)},k) \to 
\begin{bmatrix}
	\textrm{GL}_j(k) & \textrm{Mat}_{j,s-j}(k) \\
	0& \id
	\end{bmatrix}, \]
such that
\[ \Psi(\phi) = \Psi(\chi,M) =  \begin{bmatrix}
	\chi & M \\
	0& \id
	\end{bmatrix}. \]
Further, we can verify that the subgroup of the form
\[ \left\{ \begin{bmatrix}
	\id & M \\
	0& \id
	\end{bmatrix}  \ \bigg| \ M \in \textrm{Mat}_{j,s-j}(k) \right\} \]
is normal in $\left[\begin{smallmatrix}
	\textrm{GL}_j(k) & \textrm{Mat}_{j,s-j}(k) \\
	0& \id
	\end{smallmatrix} \right]$.
\end{proof}

If $\theta \in \Orth(\q,V)$ such that $\dim (\rad(V)) >1$, then to preserve the bilinear form we must have $\theta(\rad(V)) =\rad(V)$. 

We define a \emph{radical involution} to be an element $\rho \in \Orth(\q,k)$ that acts trivially outside of the $\rad(V)$ and is of order $2$.  Each nontrivial orthogonal transformation on $\rad(V)$ detects a defective vector in $V$.   For example if $\rho(g) = g'$ then $\q(g+g') = \q(g) + \q(g') = 0$.  A \emph{basic radical involution} is a map $\rho_i \in \Orth(\q,k)$ such that $\rho_i(g_i) = g_i'$ where $g_i, g_i'$ are linearly independent vectors in $\rad(V)$ with $\q(g_i) = \q(g_i')$.

\begin{prop}
Every radical involution can be written as a finite product of basic radical involutions.
\end{prop}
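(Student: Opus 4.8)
The plan is to reduce the statement to the explicit description of $\Orth(\q\vert_{\rad(V)},k)$ supplied by Proposition \ref{Oqk_radV} and then to factor an arbitrary involution of that group into elementary pieces, each of which is a basic radical involution. Since a radical involution $\rho$ acts as the identity off $\rad(V)$ and preserves $\rad(V)$, it is completely determined by its restriction $\rho\vert_{\rad(V)}$, which is an involution of $\Orth(\q\vert_{\rad(V)},k)$; conversely any such involution extends by the identity to a radical involution, and a product of basic radical involutions restricts to the product of the corresponding restrictions. Thus it suffices to prove the claim inside $\Orth(\q\vert_{\rad(V)},k)$. Writing such an involution as $(\chi,M)$ with $\chi\in\GL_j(k)$ acting on $\textrm{def}(V)$ and $M\in\Mat_{j,s-j}(k)$ the map $\textrm{def}(V)_{\mathcal{R}}'\to\textrm{def}(V)$, the multiplication rule $(\chi_1,M_1)(\chi_2,M_2)=(\chi_1\chi_2,M_1+\chi_1M_2)$ recorded in the proof of Proposition \ref{Oqk_radV} gives square $(\chi^2,M+\chi M)$, so being an involution forces $\chi^2=\id$. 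The key observation is the clean factorization
\[ (\chi,M)=(\id,M)(\chi,0), \]
where $(\id,M)^2=(\id,M+M)=(\id,0)$ because $\mathrm{char}(k)=2$ and $(\chi,0)^2=(\chi^2,0)=(\id,0)$; hence both factors are themselves radical involutions (or the identity), and it remains to handle each factor separately.

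For the translation factor $(\id,M)$ the maps $\{(\id,N):N\in\Mat_{j,s-j}(k)\}$ form an abelian subgroup with $(\id,N_1)(\id,N_2)=(\id,N_1+N_2)$, so writing $M=\sum_i N_i$ as a sum of rank-one matrices yields $(\id,M)=\prod_i(\id,N_i)$. A rank-one $N$ has the form $N(y)=\varphi(y)\,d$ for a functional $\varphi$ on $\textrm{def}(V)_{\mathcal{R}}'$ and a nonzero $d\in\textrm{def}(V)$; choosing $y_0$ with $\varphi(y_0)=1$, a direct check shows that $(\id,N)$ swaps $y_0$ and $y_0+d$ while fixing the hyperplane $\textrm{def}(V)\oplus\ker\varphi$. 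Since $d\in\textrm{def}(V)$ has norm zero and $B$ vanishes on $\rad(V)$, we get $\q(y_0+d)=\q(y_0)$, so $(\id,N)$ swaps the linearly independent, equal-norm vectors $y_0$ and $y_0+d$ and is therefore a basic radical involution.

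For the linear factor $(\chi,0)$ the operator $\chi$ is a unipotent involution of $\GL_j(k)$, that is $(\chi-\id)^2=0$, so over $k$ it decomposes as a direct sum of $2\times 2$ Jordan blocks $\left[\begin{smallmatrix}1&1\\0&1\end{smallmatrix}\right]$ and $1\times 1$ identity blocks. Each $2\times 2$ block acts on a plane $\spa\{e,e'\}\subset\textrm{def}(V)$ by $e\mapsto e$, $e'\mapsto e+e'$, which is exactly the swap of the linearly independent, norm-zero vectors $e'$ and $e+e'$ fixing $\spa\{e\}$; thus each block is a basic radical involution, the blocks commute, and $(\chi,0)$ is their product. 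Combining the two factorizations expresses $\rho$ as a finite product of basic radical involutions, completing the argument.

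The step I expect to demand the most care is the treatment of the linear factor $(\chi,0)$: one must decompose a unipotent involution over an arbitrary, possibly imperfect, field of characteristic $2$ into elementary transpositions using only the relation $(\chi-\id)^2=0$, and then verify that each resulting $2\times 2$ block genuinely matches the definition of a basic radical involution. The analogous bookkeeping in the rank-one decomposition of $M$ — exhibiting each piece explicitly as a swap of linearly independent vectors of equal norm — is routine by comparison, but the equal-norm verification in both places is precisely where the totally singular nature of $\textrm{def}(V)$ and the vanishing of $B$ on $\rad(V)$ must be invoked.
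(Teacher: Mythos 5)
Your proof is correct, and it takes a genuinely different route from the paper's. The paper argues directly and inductively inside $\rad(V)$: pick a vector $g_1$ moved by $\rho$, set $g_1'=\rho(g_1)$, split off $\rad(V)_1=\spa\{g_1,g_1'\}$, and iterate to produce an adapted basis $\{g_1,g_1',\ldots,g_l,g_l',h_{l+1},\ldots,h_s\}$ with $\rho(g_i)=g_i'$ and $\rho(h_j)=h_j$, whence $\rho=\rho_l\cdots\rho_1$ with factors supported on pairwise independent planes. You instead exploit the structure theorem (Proposition \ref{Oqk_radV}): write $\rho\vert_{\rad(V)}=(\chi,M)$, factor it as $(\id,M)(\chi,0)$ using the recorded multiplication rule, and finish by pure linear algebra --- a rank-one decomposition of $M\in\Mat_{j,s-j}(k)$ and the Jordan form of the unipotent involution $\chi$; your worry about imperfect fields is a non-issue, since $(\chi+\id)^2=0$ with eigenvalue $1\in k$ makes the canonical form rational over any field, and your equal-norm checks (using $\q\equiv 0$ on $\textrm{def}(V)$ and $B\equiv 0$ on $\rad(V)$) are exactly the right ones. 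What the paper's construction buys is economy: its factors have disjoint supports and directly produce the adapted basis and length $m$ that feed into the quadratic signature of Corollary \ref{conj_rad}, whereas your factors may share residual directions (several rank-one pieces can have the same image $\spa\{d\}$) and need not be minimal in number --- harmless, since only existence of a finite factorization is claimed. What your route buys is robustness: the paper's induction asserts that $g_2,g_2'$ are linearly independent from $\rad(V)_1$, which can fail for a naive choice of $g_2$ (take $\rad(V)$ totally singular with basis $\{e,f,g\}$ and $\rho(e)=e$, $\rho(f)=f+e$, $\rho(g)=g+e$; after $g_1=f$, $g_1'=e+f$, every moved $g_2\notin\rad(V)_1$ has $g_2'\in g_2+\spa\{e\}$, so the quadruple is dependent, and in fact this $\rho$ is already basic, swapping $f$ and $e+f$ while fixing $\spa\{e,f+g\}$), so the paper's argument tacitly requires a more careful selection that it does not spell out; your canonical-form computation handles such overlapping residual directions automatically.
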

\begin{proof}
Let $\rho$ be a radical involution on $V$.  There is a vector $g_1 \in \rad(V)$ such that $\rho$ acts nontrivially on $g_1$.  Then there must be a vector $g_1' \in \rad(V)$ that is linearly independent form $g_1$, or else order or $\rho$ is not $2$, such that $\q(g_1') = \q(g_1)$ and $\rho(g_1) = g_1'$.  Now $\{g_1,g_1'\}$ forms a basis for a two dimensional subspace $\rad(V)_1\subset \rad(V)$ with defect $\geq 1$.  If $g_1$ and $g_1'$ are the only vectors where $\rho$ acts nontrivially then we are done and $\rho=\rho_1$ is a basic radical involution.  If not there exists an element $g_2 \in \rad(V)$ such that $g_2 \not\in \rad(V)_1$ and $\rho(g_2) = g_2'$ defines a nontrivial action.  Otherwise $g_2 \in \rad(V)_1$ and $\rho$ is already defined on $\rad(V)_1$.  So $g_2, g_2'$ are linearly independent from one another and from $\rad(V)_1$.  We define $\rad(V)_2$ to be the span of $\{g_1,g_1',g_2,g_2'\}$.  If $\rho$ acts trivially outside $\rad(V)_2$ we are done and $\rho = \rho_2\rho_1$. For any $\rad(V)_i$ either $\rho$ acts trivially outside of $\rad(V)_i$ and $\rho = \rho_i \cdots \rho_2\rho_1$ or there exists a new vector $g_{i+1}$ that is linearly independent.  By induction we have that there exists a basis 
\[ \{ g_1, g_1', g_2, g_2', \ldots, g_l,g_l', h_{l+1}, \ldots, h_s\}, \]
of $\rad(V)$ such that $\dim(\rad(V)) = s$ and $\rho(g_i) = g_i'$ for all $1\leq i \leq l$ and $\rho(h_j) = h_j$ for all $l+1 \leq j \leq s$.
Each $\rho_i$ acts nontrivially on the subspace spanned by $\{g_i, g_i'\}$ and trivially on remaining basis vectors.  So $\rho = \rho_l \cdots \rho_2\rho_1$ is a product of basis radical involutions.
\end{proof}

\begin{prop}
Two basic radical involutions $\rho_1, \rho_2$ are $\Orth(\q,k)$-conjugate if and only if $\rho_1$ and $\rho_2$ act non-trivially on isometric vectors.
\end{prop}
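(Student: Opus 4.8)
The plan is to prove both implications by analysing how conjugation by an element of $\Orth(\q,k)$ transports a basic radical involution. The forward implication should fall out of a direct ``push--forward'' computation, exactly as in Proposition \ref{conj_beh} for transvections, while the reverse implication requires producing an honest isometry of $V$, and that is where the real work lies.

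First I would record the radical analog of the conjugation formula $\phi\tau_u\phi^{-1}=\tau_{\phi(u)}$. If $\rho$ is a basic radical involution with $\rho(g)=g'$, $\rho(g')=g$, fixing a chosen complement of $\spa\{g,g'\}$, and $\phi\in\Orth(\q,k)$, then since every element of $\Orth(\q,k)$ preserves $\rad(V)$ and acts as the identity on $V/\rad(V)$, the map $\phi\rho\phi^{-1}$ is again a radical involution; evaluating on $\phi(g)$, on $\phi(g')$, and on $\phi$ of the old complement shows it is the basic radical involution swapping $\phi(g)$ and $\phi(g')$. For the forward implication, write $\rho_2=\phi\rho_1\phi^{-1}$ where $\rho_i$ acts nontrivially on $g_i$. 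Then $\rho_2$ swaps $\phi(g_1)$ with $\phi(g_1')$, so $\phi(g_1)$ is moved nontrivially by $\rho_2$, and since $\phi$ preserves $\q$ we get $\q(\phi(g_1))=\q(g_1)$. Hence $\rho_1$ and $\rho_2$ act nontrivially on a pair of isometric vectors.

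For the reverse implication I would assume $\q(g_1)=\q(g_2)$ and aim to build $\phi\in\Orth(\q,k)$ with $\phi(g_1)=g_2$ and $\phi(g_1')=g_2'$; the conjugation formula of the previous paragraph then yields $\phi\rho_1\phi^{-1}=\rho_2$ immediately. I would construct $\phi$ in two stages. Since $g_i+g_i'\in\rad(V)$ is isotropic it lies in $\textrm{def}(V)$, so using the description $\Orth(\q|_{\rad(V)},k)\cong\GL_j(k)\ltimes\Mat_{j,s-j}(k)$ from Proposition \ref{Oqk_radV}, I would first produce $\psi\in\Orth(\q|_{\rad(V)},k)$ carrying $(g_1,g_1')$ to $(g_2,g_2')$: the $\GL_j(k)$ factor moves the residual vectors $g_1+g_1'$ onto $g_2+g_2'$, and the $\Mat_{j,s-j}(k)$ factor adjusts the components in $\textrm{def}(V)$, with $\q(g_1)=\q(g_2)$ used to match the anisotropic data. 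Then I would extend $\psi$ to all of $V$ by choosing a nonsingular completion via the Gram--Schmidt type Lemma \ref{gramschmidt_char2}, letting $\phi$ act as $\psi$ on $\rad(V)$ and as the identity on the completion, and verifying that the resulting map preserves $\q$.

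The main obstacle is precisely this extension step, together with the delicate structural point that, by Proposition \ref{Oqk_radV}, every element of $\Orth(\q|_{\rad(V)},k)$ fixes the anisotropic summand $\textrm{def}(V)_{\mathcal{R}}'$ pointwise modulo $\textrm{def}(V)$. Consequently $\psi$ can send $g_1$ to $g_2$ only once their anisotropic components have been reconciled, so the hypothesis $\q(g_1)=\q(g_2)$ must be used not merely as an equality of scalars but to align the representatives of $g_1$ and $g_2$ inside the anisotropic part; the norm condition in the statement should be read exactly as the assertion that such an alignment is possible. Carrying out this alignment, and confirming that the assembled map lies in $\Orth(\q,k)$ rather than only in the larger group $\textrm{BL}(B,k)$, is the crux of the argument — the two conjugation computations flanking it are routine.
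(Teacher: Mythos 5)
Your proposal is correct and follows the same skeleton as the paper's proof, but it is substantially more complete: the paper's entire argument is the one-line observation that $\delta \rho_1 \delta^{-1}(g_2) = \rho_2(g_2)$ forces $\delta^{-1}(g_2) = g_1$, so that conjugacy reduces to the existence of an isometry carrying $g_1$ to $g_2$, with the existence of such a $\delta$ when $\q(g_1)=\q(g_2)$ left entirely implicit. Your reverse direction supplies exactly that omitted content, and your structural reading of Proposition \ref{Oqk_radV} is the right one. In fact the ``alignment'' you worry about is automatic rather than an extra demand on the hypothesis: writing $g_i = x_i + y_i$ with $x_i \in \textrm{def}(V)$ and $y_i \in \textrm{def}(V)_{\mathcal{R}}'$, one has $\q(g_i) = \q(y_i)$ since $B$ vanishes on $\rad(V)$, so $\q(g_1)=\q(g_2)$ gives $\q(y_1+y_2)=0$, and anisotropy of $\textrm{def}(V)_{\mathcal{R}}'$ forces $y_1 = y_2$ on the nose. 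After that, choosing $\chi \in \GL_j(k)$ with $\chi(g_1+g_1') = g_2+g_2'$ (note $g_i+g_i'$ is isotropic, hence lies in $\textrm{def}(V)$) and $M$ with $My_1 = x_2 + \chi(x_1)$ carries $(g_1,g_1')$ to $(g_2,g_2')$, and the extension by the identity on $V_{\mathcal{B}}$ preserves $\q$ since $\q(w+g)=\q(w)+\q(g)$ across the decomposition $V = V_{\mathcal{B}} \perp \rad(V)$.

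Two caveats. First, your sentence that every element of $\Orth(\q,k)$ ``acts as the identity on $V/\rad(V)$'' is false as stated (any transvection $\tau_u$ with $u \notin \rad(V)$ acts nontrivially on the quotient); what is true, and all your computation uses, is that a \emph{radical} involution induces the identity on $V/\rad(V)$ and that isometries preserve $\rad(V)$, so the evaluation argument you describe goes through unchanged. Second, both you and the paper gloss the fact that a basic radical involution is not determined by the pair $(g,g')$ alone: writing $\rho = \id + N$ with $N(v) = f(v)(g+g')$, the exact equality $\phi \rho_1 \phi^{-1} = \rho_2$ requires $\phi$ to match the fixed hyperplanes $\ker f_i$ in addition to sending $g_1 \mapsto g_2$ and $g_1' \mapsto g_2'$; your construction has enough remaining freedom in $\chi$ and $M$ to arrange $\psi(\ker f_1 \cap \rad(V)) = \ker f_2 \cap \rad(V)$, but the claim that the conjugation formula then yields $\rho_2$ ``immediately'' should be tempered accordingly. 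Since the paper's own proof shares this imprecision, your proposal is at least as rigorous as the published argument.
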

\begin{proof}
Let $\rho_1(g_1) = g_1'$ and $\rho_2(g_2) = g_2'$.  Then
\[ \delta \rho_1 \delta^{-1}(g_2) = \rho_2(g_2), \]
if and only if $\delta^{-1}(g_2) = g_1$. 
\end{proof}

Each radical involution maps an element $g_i \mapsto g_i'$ with $\q(g_i) = \q(g_i')$.  We chose a basis of $\rad(V)$ with respect to $\rho$ of length $m$ to be
\[ \{g_1 + g_1', g_1, g_2+g_2',g_2, \ldots, g_m+g_m', g_m, h_{2m+1}, \ldots, h_s \}, \]
where $\rho$ acts nontrivially on $g_i + g_i'$ and $h_j$.  We define the \emph{quadratic signature} of the radical involution to be 
\[  \langle \q(g_1), \q(g_2), \ldots, \q(g_m) \rangle. \]

\begin{corollary} \label{conj_rad}
All radical involutions of length $m$ with same quadratic signature
\[  \langle \q(g_1), \q(g_2), \ldots, \q(g_m) \rangle, \]
are conjugate.
\end{corollary}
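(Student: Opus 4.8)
The plan is to reduce the whole question to a conjugacy problem inside $\Orth(\q\vert_{\rad(V)},k)$ and then to build a single conjugating isometry out of the data recorded by the quadratic signature. First I would establish the reduction: if $\phi$ is any isometry of $(\rad(V),\q\vert_{\rad(V)})$ and $W$ is a fixed complement of $\rad(V)$ in $V$, then extending $\phi$ by the identity on $W$ yields an element of $\Orth(\q,k)$. This is because $B(\rad(V),V)=0$, so for $w\in W$ and $r\in\rad(V)$ we have $\q(w+\phi(r))=\q(w)+\q(\phi(r))=\q(w)+\q(r)=\q(w+r)$. Since each radical involution acts trivially on $W$ and preserves $\rad(V)$, the relation $\phi\rho\phi^{-1}=\sigma$ holds on all of $V$ as soon as it holds on $\rad(V)$. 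Thus it suffices to conjugate the restrictions of the two radical involutions inside $\Orth(\q\vert_{\rad(V)},k)$.

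Next I would invoke the decomposition into basic radical involutions: write $\rho=\rho_m\cdots\rho_1$ and $\sigma=\sigma_m\cdots\sigma_1$ with $\rho_i(g_i)=g_i'$ and $\sigma_i(x_i)=x_i'$. The residual vectors $g_i+g_i'=(\rho+\id)g_i$ and $x_i+x_i'$ all lie in $\mathrm{def}(V)$ (they have norm zero because $B\equiv 0$ on the radical), and $\{g_i,g_i'\}$, $\{x_i,x_i'\}$ each span a $2m$-dimensional subspace. The hypothesis that the signatures $\langle\q(g_1),\dots,\q(g_m)\rangle$ and $\langle\q(x_1),\dots,\q(x_m)\rangle$ are isometric should let me arrange the $g_i$ and $x_i$ so that the assignment $g_i\mapsto x_i$, $g_i'\mapsto x_i'$ is an isometry $T$ between the totally singular subspaces $\spa\{g_i,g_i'\}$ and $\spa\{x_i,x_i'\}$; here one only has to match the norms $\q(g_i)=\q(x_i)$ together with the bilinear pairings, which vanish automatically. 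I would then extend $T$ to an isometry $\phi\in\Orth(\q\vert_{\rad(V)},k)$, either by a Witt-type extension for $\q\vert_{\rad(V)}$ or, more concretely, through the explicit description $\Orth(\q\vert_{\rad(V)},k)\cong\GL_j(k)\ltimes\Mat_{j,s-j}(k)$ from Proposition \ref{Oqk_radV}, and check, exactly as in Proposition \ref{conj_beh}, that $\phi\rho_i\phi^{-1}$ is the basic radical involution carrying $\phi(g_i)=x_i$ to $\phi(g_i')=x_i'$, so that $\phi\rho\phi^{-1}=\sigma$.

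The main obstacle is the passage from ``isometric signatures'' to an honest isometry $T$, together with the global patching. Because $B\equiv 0$ on $\rad(V)$, the form $\q\vert_{\rad(V)}$ is totally singular and its isometry group $\GL_j(k)\ltimes\Mat_{j,s-j}(k)$ is large; this is precisely why only the weaker quadratic equivalence of signatures is needed here, rather than the bilinear equivalence $\langle\cdot\rangle_B$ required for diagonal transvection involutions in Theorem \ref{conj_transv}, since discrepancies between the two configurations can be absorbed into $\mathrm{def}(V)$ by the $\Mat_{j,s-j}(k)$ factor. The delicate points I would have to treat carefully are that for a totally singular form the restriction of $\q\vert_{\rad(V)}$ to a complement of the fixed space is sensitive to the choice of complement, so the source vectors $g_i$ and $x_i$ must be selected compatibly with the ambient form $\q\vert_{\rad(V)}$, and that the per-factor conjugacies produced by the basic-involution proposition must be realized by one common $\phi$; it is the freedom in the $\Mat_{j,s-j}(k)$ factor that makes this simultaneous adjustment possible. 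Once $T$ and its extension $\phi$ are in place, the verification that $\phi$ conjugates $\rho$ to $\sigma$ is the routine computation of Proposition \ref{conj_beh}.
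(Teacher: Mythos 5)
Your overall route is the one the paper implicitly intends (the corollary is stated there without its own proof, as a consequence of the decomposition into basic radical involutions and the one-line conjugacy proposition preceding it): restrict to the radical, extend any isometry of $\rad(V)$ by the identity on a complement, and send one adapted configuration $\{g_i,g_i'\}$ to the other $\{x_i,x_i'\}$. The reduction step and the per-pair matching are sound: since $\q(g_i)=\q(x_i)$ and $B\equiv 0$ on $\rad(V)$, the difference $g_i+x_i$ has norm zero and hence lies in $\mathrm{def}(V)$, which is exactly the condition (namely $\phi(v)+v\in\mathrm{def}(V)$ for all $v$) characterizing elements of $\Orth(\q\vert_{\rad(V)},k)$.

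The genuine gap is the step you flag but do not resolve: extending the partial map $T$ to a global $\phi\in\Orth(\q\vert_{\rad(V)},k)$ with $\phi\rho\phi^{-1}=\sigma$. Any such $\phi$ must carry $\mathrm{Fix}(\rho)\cap\rad(V)$ (the span of the $g_i+g_i'$ and the $h_j$) onto $\mathrm{Fix}(\sigma)\cap\rad(V)$, and the isometry class of $\q$ restricted to this fixed space is a conjugacy invariant that the quadratic signature does not determine, because Witt cancellation fails for singular forms (note that Lemma \ref{zero_cancel} requires nondefectiveness). Concretely, take $\rad(V)=\spa\{d,p,r\}$ with $\q\vert_{\rad(V)}=\langle 0,1,1\rangle$, so $\mathrm{def}(V)=\spa\{d,p+r\}$. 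Let $\rho$ fix $d$ and $r$ and send $p\mapsto p+d$; let $\sigma$ fix $d$ and $p+r$ and send $p\mapsto p+d$ (hence $r\mapsto r+d$). Both are basic radical involutions of length $1$ with quadratic signature $\langle 1\rangle$, via the adapted bases $\{d,p,r\}$ and $\{d,p,p+r\}$; yet $\q$ represents $1$ on $\mathrm{Fix}(\rho)\cap\rad(V)=\spa\{d,r\}$ while it vanishes identically on $\mathrm{Fix}(\sigma)\cap\rad(V)=\spa\{d,p+r\}$, and since every element of $\Orth(\q,k)$ preserves $\rad(V)$ and norms, no conjugating element exists, even in the full orthogonal group of $V$. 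So the ``Witt-type extension'' you invoke is false for totally singular forms, and the $\Mat_{j,s-j}(k)$ factor cannot absorb the discrepancy: isometries of the radical act as the identity modulo $\mathrm{def}(V)$, so they can never move $\spa\{d,r\}$, which contains the vector $r\equiv p \not\equiv 0 \bmod \mathrm{def}(V)$, into $\mathrm{def}(V)$. Your argument (and, read literally, the corollary itself, together with the one-line proposition it rests on) goes through only after adding the hypothesis that the fixed parts match, e.g.\ that $\langle \q(h_{2m+1}),\ldots,\q(h_s)\rangle \cong \langle \q(h'_{2m+1}),\ldots,\q(h'_s)\rangle$; with that hypothesis, mapping the full adapted basis to the full adapted basis defines the required isometry and the routine conjugation computation of Proposition \ref{conj_beh} finishes the proof.
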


\section{Involutions of a general vector space}

Elements in $\Orth(\q,k)$ where $(V,\q)$ is a quadratic space and $\dim (\rad(V) ) \geq 0$, can be thought of in terms of block matrices.  Consider a matrix of the form
\begin{equation} \label{gen_map}
(\tau, Y , \rho) =  \begin{bmatrix}
	\tau & 0 \\
	Y & \rho
	\end{bmatrix}, 
\end{equation}
where $\tau \in \Sp(B_{V_{\mathcal{B}}},k)$ and where $\mathcal{B}$ is a basis of some maximal nonsingular space in $V$ with $\dim( V_{\mathcal{B}}) = 2r$, $\dim ( \rad(V) ) = s$ and $\dim( V ) = 2r+s$.  Now we know that $\rad(V)$ must be left invariant by such a map so $\rho \in \Orth(\q_{\rad(V)}, k)$ and $(\tau,Y) \in \mathcal{M}(\q,V_{\mathcal{B}})$,  where 
\[ \mathcal{M}(\q,V_{\mathcal{B}}) = \{ (\phi, X) \in \Sp(B_{V_{\mathcal{B}}}, k) \ltimes \Mat_{2r,s} \ | \  \q(\phi(w) ) = \q(w+Xw) \}.  \]

Let $\q$ be a quadratic form of type $(r,s)$ on a vector space $V$ over a field $k$ of characterstic $2$ with $\dim (V) = 2r+s$.  Let us define 
\[ \mathcal{B} = \{ u_1,v_1,u_2,v_2, \ldots, u_r,v_r \}, \]
to be some basis of a maximal nonsingular subspace of $V$ of dimension $2r$.   Then 
\[ V = V_{\mathcal{B}} \perp \rad(V), \]
where $V_{\mathcal{B}} = \spa \mathcal{B}$.  We are interested in the case when $\dim ( \rad(V) ) = s >1$ as all elements of $\Orth(\q,k)$ leave $\rad(V)$ invariant.

\begin{prop} \label{cor_rad_inv}
$(\tau, Y, \rho)^2  = \id$ if and only if $\tau^2 = \id$ , $\rho^2=\id$ and $Y = \rho Y \tau$.
\end{prop}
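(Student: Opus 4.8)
The plan is to treat $(\tau, Y, \rho)$ as the block matrix of \ref{gen_map} acting on the decomposition $V = V_{\mathcal{B}} \perp \rad(V)$, writing a general vector as a column with its $V_{\mathcal{B}}$-part on top and its $\rad(V)$-part on the bottom, and then to compute the square by a single block multiplication. Carrying this out gives
\[
(\tau, Y, \rho)^2 = \begin{bmatrix} \tau & 0 \\ Y & \rho \end{bmatrix}\begin{bmatrix} \tau & 0 \\ Y & \rho \end{bmatrix} = \begin{bmatrix} \tau^2 & 0 \\ Y\tau + \rho Y & \rho^2 \end{bmatrix},
\]
so that $(\tau, Y, \rho)^2 = \id$ is equivalent to the three simultaneous conditions $\tau^2 = \id$, $\rho^2 = \id$, and $Y\tau + \rho Y = 0$, the upper-right block being automatically zero. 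This reduces the entire statement to showing that, once $\tau^2 = \id$ is known, the off-diagonal condition $Y\tau + \rho Y = 0$ coincides with $Y = \rho Y\tau$.

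For that last equivalence I would use that we are in characteristic $2$, so $Y\tau + \rho Y = 0$ reads simply as $Y\tau = \rho Y$. Since $\tau^2 = \id$ makes $\tau$ its own inverse, multiplying $Y\tau = \rho Y$ on the right by $\tau$ yields $Y = \rho Y\tau$; conversely, multiplying $Y = \rho Y\tau$ on the right by $\tau$ and applying $\tau^2 = \id$ recovers $Y\tau = \rho Y$. Thus both directions go through as soon as $\tau^2 = \id$ is available, which it is: in the forward direction it is forced by the upper-left block, and in the reverse direction it is one of the hypotheses.

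The only point that needs a little care is the interdependence of the conditions: the reformulation of the $Y$-relation is not purely formal but genuinely relies on $\tau^2 = \id$, so I would verify the conditions in the right order, establishing $\tau^2 = \id$ and $\rho^2 = \id$ from the diagonal blocks first and only then rewriting the off-diagonal relation. Beyond this bookkeeping there is no real obstacle; the argument is a direct block computation combined with the characteristic-$2$ identity $-1 = 1$. It is worth remarking that one need not invoke the defining constraint $\q(\phi(w)) = \q(w + Xw)$ of $\mathcal{M}(\q, V_{\mathcal{B}})$ here, since the equivalence is an identity of block matrices and holds for the underlying linear map regardless, with membership in $\Orth(\q, k)$ serving only to guarantee that $\tau$, $\rho$, and $Y$ are of the stated types.
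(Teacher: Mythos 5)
Your proof is correct and takes essentially the same approach as the paper: the paper's proof is exactly the block computation $\left[\begin{smallmatrix} \tau & 0 \\ Y & \rho \end{smallmatrix}\right]^2 = \left[\begin{smallmatrix} \tau^2 & 0 \\ Y\tau + \rho Y & \rho^2 \end{smallmatrix}\right]$ followed by reading off the three conditions. Your only addition is to spell out the step the paper leaves implicit, namely that in characteristic $2$ the condition $Y\tau + \rho Y = 0$ is equivalent to $Y = \rho Y\tau$ once $\tau^2 = \id$ is in hand, which is a correct and worthwhile clarification.
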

\begin{proof}
Thinking of $(\tau,Y,\rho)$ as a block matrix we have
\[ \begin{bmatrix}
	\tau & 0 \\
	Y & \rho
	\end{bmatrix}^2 =
	\begin{bmatrix}
	\tau^2 & 0 \\
	Y\tau+ \rho Y & \rho^2
	\end{bmatrix}. \]
This matrix is order $2$ if and only if $\tau^2 = \id$, $\rho^2 = \id$ and $Y = \rho Y \tau$.
\end{proof}

There are two main types of maps of order $2$ of this form to consider.  First we notice that if the above map has order $2$ it is necessary that $Y \tau = \rho Y$.

\begin{prop}
If $\tau,\rho \in \Orth(\q,k)$ such that $\tau$ is a diagonal involution and $\rho$ is a radical involution, then there exists a map $Y:V_{\mathcal{B}_{\tau}} \to \rad(V)$ such that $\widetilde{Y}=\left[\begin{smallmatrix}
	\id & 0 \\
	Y & \id
	\end{smallmatrix} \right] \in \Orth(\q,k)$ and  $(\tau, Y, \rho) $ is an involution on $V$.
\end{prop}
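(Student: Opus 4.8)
The plan is to separate the two requirements on $Y$ — that $\widetilde{Y} \in \Orth(\q,k)$ (equivalently that $(\tau,Y,\rho) \in \Orth(\q,k)$) and that $(\tau,Y,\rho)$ squares to the identity — and then to produce a single $Y$ meeting both by an averaging construction.

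First I would pin down the orthogonality cost. Writing an arbitrary vector of $V$ as $w + g$ with $w \in V_{\mathcal{B}_{\tau}}$ and $g \in \rad(V)$, the map sends it to $\tau(w) + \bigl(Y(w) + \rho(g)\bigr)$, and since $Y(w) + \rho(g) \in \rad(V)$ every cross term $B(\tau(w),\, Y(w)+\rho(g))$ vanishes. Using $\tau \in \Orth(\q,k)$ and $\rho \in \Orth(\q\vert_{\rad(V)},k)$, the identity $\q((\tau,Y,\rho)(w+g)) = \q(w+g)$ collapses to the single demand $\q(Y(w)) = 0$ for all $w$. By Proposition \ref{hofflaghdecomp} we have $\rad(V) = \mathrm{def}(V) \perp \mathrm{def}(V)_{\mathcal{R}}'$ with the second summand anisotropic, so $\q(Y(w)) = 0$ forces $Y(w) \in \mathrm{def}(V)$. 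Hence both $(\tau,Y,\rho)$ and $\widetilde{Y}$ lie in $\Orth(\q,k)$ exactly when $\im(Y) \subseteq \mathrm{def}(V)$.

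Next I would invoke Proposition \ref{cor_rad_inv}: as $\tau^2 = \id$ and $\rho^2 = \id$ hold by hypothesis, $(\tau,Y,\rho)$ is an involution precisely when $Y = \rho Y \tau$. The problem thus reduces to exhibiting a linear map $Y : V_{\mathcal{B}_{\tau}} \to \mathrm{def}(V)$ with $Y = \rho Y \tau$. For this I would average: pick any linear $Z : V_{\mathcal{B}_{\tau}} \to \mathrm{def}(V)$ and set $Y = Z + \rho Z \tau$. Then $\rho Y \tau = \rho Z \tau + \rho^2 Z \tau^2 = \rho Z \tau + Z = Y$, using $\rho^2 = \tau^2 = \id$, so the involution condition is automatic. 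To keep $\im(Y) \subseteq \mathrm{def}(V)$ I would use that $\rho$, being an element of $\Orth(\q\vert_{\rad(V)},k)$, carries $\mathrm{def}(V)$ onto itself: the block description in Proposition \ref{Oqk_radV} shows every such map acts on $\mathrm{def}(V)$ by an element of $\GL_j(k)$. Thus $\im(\rho Z \tau) \subseteq \rho(\mathrm{def}(V)) = \mathrm{def}(V)$ and $\im(Y) \subseteq \mathrm{def}(V)$. In particular $Z = 0$ already yields a witness (with $Y = 0$ and $\widetilde{Y} = \id$), while nonzero $Z$ produces genuinely mixed involutions.

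The one step I would guard most carefully is the claim that the radical involution $\rho$ stabilizes $\mathrm{def}(V)$; this is exactly what lets the averaged map's image fall back inside $\mathrm{def}(V)$, and it is where Proposition \ref{Oqk_radV} does the real work. The remaining content is the routine bookkeeping of the two block-matrix identities together with the anisotropy argument identifying the isotropic vectors of $\rad(V)$ with $\mathrm{def}(V)$.
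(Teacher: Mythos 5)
Your proof is correct, and it takes a genuinely different route from the paper's. The paper argues by explicit construction: fixing the adapted basis $\mathcal{B}_{\tau}=\{u_1,u_1',\ldots,u_l,u_l',w_1,\ldots\}$, it defines $\widetilde{Y}(u_i)=u_i+h_i+\rho(h_i)$, $\widetilde{Y}(u_i')=u_i'+\frac{1}{\q(u_i)}(h_i+\rho(h_i))$, $\widetilde{Y}(w_j)=w_j$, notes $\q(h_i+\rho(h_i))=0$, and leaves the verification of the criteria of Proposition \ref{cor_rad_inv} as a direct computation. You instead characterize both requirements structurally: orthogonality of $(\tau,Y,\rho)$ (and of $\widetilde{Y}$) is exactly $\im(Y)\subseteq \mathrm{def}(V)$, because the isotropic vectors of $\rad(V)$ are precisely $\mathrm{def}(V)$ (additivity of $\q$ on the radical plus anisotropy of the complement), while the order-two condition is exactly $Y=\rho Y\tau$, which you solve formally by the averaging $Y=Z+\rho Z\tau$, using $\rho^2=\tau^2=\id$ and $\rho(\mathrm{def}(V))=\mathrm{def}(V)$. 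This buys robustness and generality: there is no basis-by-basis computation, you get a whole family of witnesses, and your construction recovers the paper's as the special case $Z(u_i)=h_i\in\mathrm{def}(V)$, $Z(u_i')=Z(w_j)=0$, which yields $Y(u_i)=h_i+\rho(h_i)$ and $Y(u_i')=\frac{1}{\q(u_i)}\rho(h_i)$. Indeed, your formal verification is sturdier than the paper's: as printed, the paper's $\widetilde{Y}$ fails $Y\tau=\rho Y$ on $u_i'$, where the condition reads $(\id+\rho)\bigl(Y(u_i')\bigr)=\frac{1}{\q(u_i)}Y(u_i)$; since $h_i+\rho(h_i)$ is fixed by $\rho$, the left-hand side vanishes while the right-hand side is $\frac{1}{\q(u_i)}(h_i+\rho(h_i))$, and the repaired choice $\widetilde{Y}(u_i')=u_i'+\frac{1}{\q(u_i)}\rho(h_i)$ with $h_i$ isotropic is exactly an instance of your averaging. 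Two minor caveats, neither affecting correctness: the averaging operator $Z\mapsto Z+\rho Z\tau$ need not exhaust all solutions of $Y=\rho Y\tau$ (in characteristic $2$ its image can be a proper subspace of the fixed space of $Y\mapsto\rho Y\tau$), and your observation that $Y=0$ already witnesses the statement shows the proposition as stated is nearly vacuous — the real content, which your nonzero $Z$'s supply and the paper's subsequent remark uses, is the existence of genuinely mixed involutions shifting $\mathcal{B}_{\tau}$ into the radical.
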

\begin{proof}
Let $V$ be a vector space over a field of characteristic $2$ with a quadratic form $\q$ of type $(r,s)$,
\[ \tau = \tau_{u_l} \cdots \tau_{u_2} \tau_{u_1}, \]
and define 
\[ \mathcal{B}_{\tau} = \{u_1, u_1', u_2, u_2', \ldots, u_l,u_l', w_1, w_1', \ldots , w_{2(r-l)}, w_{2(r-l)}' \}, \]
so that we have the decomposition $V = V_{\mathcal{B_{\tau}}} \perp \rad(V)$ and $W$ is the subspace of $V_{\mathcal{B}_{\tau}}$ such that $\tau|_W = \id_W$. We can define 
\begin{align*}
\widetilde{Y}(u_i) &= u_i + h_i + \rho(h_i) \\
\widetilde{Y}(u_i') & = u_i' + \dfrac{1}{\q(u_i)}\left( h_i + \rho(h_i) \right) \\
\widetilde{Y}(w_j) &= w_j.
\end{align*}
Notice that $ h_i + \rho(h_i)$ is a vector in $\rad(V)$ such that $\q(h_i + \rho(h_i))=0$.  A direct computation shows that the properties in Proposition \ref{cor_rad_inv} are met and $(\tau, Y, \rho) $ is an involution in $\Orth(\q,k)$.
\end{proof}

Moreover, the above $(\tau, Y, \rho) $ is such that $u_i \mapsto u_i + (h_i + \rho(h_i))$ and so $\mathcal{B}_{\tau}$ is shifted by $h_i + \rho(h_i)$ and $\tau_{u_i} \mapsto \tau_{u_i + (h_i + \rho(h_i))}$.

\begin{prop}
A map of the form 
\[ (\phi, X, \delta) = \begin{bmatrix}
			\phi & 0 \\
			X & \delta
			\end{bmatrix}, \]
is an element of $\Orth(\q,k)$ if and only if $\phi \in \Sp(B_{V_{\mathcal{B}}},k)$, $\delta \in \Orth(\q_{\rad(V)},k)$ and $\q(\widetilde{X}(w)) = \q(\phi(w))$ for all $w \in V_{\mathcal{B}}$ and $\widetilde{X} = \id_V +X$.
\label{norm_cnd}
\end{prop}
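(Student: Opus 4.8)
The plan is to unwind the block form of $(\phi, X, \delta)$ against the orthogonal decomposition $V = V_{\mathcal{B}} \perp \rad(V)$, reducing everything to the two basic facts that the bilinear form $B$ vanishes identically on $\rad(V)$ and that $B(w,z) = 0$ whenever $z \in \rad(V)$. Writing a general vector as $v = w + z$ with $w \in V_{\mathcal{B}}$ and $z \in \rad(V)$, the map acts by $(\phi,X,\delta)(v) = \phi(w) + \bigl(X(w) + \delta(z)\bigr)$, where $\phi(w) \in V_{\mathcal{B}}$ and $X(w) + \delta(z) \in \rad(V)$. Since $\q(a+b) = \q(a) + \q(b) + B(a,b)$ and every cross term here pairs some vector against an element of $\rad(V)$, I expect the two clean identities
\[ \q(v) = \q(w) + \q(z) \quad\text{and}\quad \q\bigl((\phi,X,\delta)(v)\bigr) = \q(\phi(w)) + \q(X(w)) + \q(\delta(z)). \]
I would establish these first, using $B\vert_{\rad(V)} = 0$ to split the norm of $X(w) + \delta(z)$.

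For the forward direction I would assume the map lies in $\Orth(\q,k)$ and specialize the identity $\q((\phi,X,\delta)(v)) = \q(v)$. Taking $w = 0$ gives $\q(\delta(z)) = \q(z)$ for all $z \in \rad(V)$; combined with the observation that the zero upper-right block forces $(\phi,X,\delta)(\rad(V)) \subseteq \rad(V)$, so that $\delta$ is the restriction of an invertible map and hence invertible, this yields $\delta \in \Orth(\q_{\rad(V)},k)$. Taking $z = 0$ gives $\q(\phi(w)) + \q(X(w)) = \q(w)$; since $\q(\widetilde{X}(w)) = \q(w + X(w)) = \q(w) + \q(X(w))$ (the cross term dies as $X(w) \in \rad(V)$), this rearranges in characteristic $2$ to exactly $\q(\widetilde{X}(w)) = \q(\phi(w))$. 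Finally, to obtain $\phi \in \Sp(B_{V_{\mathcal{B}}},k)$ I would use that invariance of $\q$ forces invariance of $B$ (from $B(a,b) = \q(a+b) + \q(a) + \q(b)$); restricting $B((\phi,X,\delta)(w), (\phi,X,\delta)(w')) = B(w,w')$ to $w, w' \in V_{\mathcal{B}}$ and discarding the $X$-terms, which land in $\rad(V)$, leaves $B(\phi(w), \phi(w')) = B(w,w')$, and nondegeneracy of $B$ on $V_{\mathcal{B}}$ upgrades this to invertibility.

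For the converse I would assume the three conditions and run the computation backwards. Invertibility is immediate from the block-lower-triangular shape with invertible diagonal blocks $\phi \in \Sp(B_{V_{\mathcal{B}}},k)$ and $\delta \in \Orth(\q_{\rad(V)},k)$. For norm preservation, the hypothesis $\q(\widetilde{X}(w)) = \q(\phi(w))$ gives $\q(\phi(w)) + \q(X(w)) = \q(w)$, while $\delta \in \Orth(\q_{\rad(V)},k)$ gives $\q(\delta(z)) = \q(z)$; substituting both into the second clean identity collapses $\q((\phi,X,\delta)(v))$ to $\q(w) + \q(z) = \q(v)$.

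The only genuinely delicate point is the $\phi \in \Sp(B_{V_{\mathcal{B}}},k)$ step, which hinges on the characteristic-$2$ phenomenon that invariance of $\q$ forces invariance of $B$; everything else is bookkeeping driven by $B\vert_{\rad(V)} = 0$. It is worth remarking that this condition is in fact already forced by $\q(\widetilde{X}(w)) = \q(\phi(w))$: applying that identity at $w + w'$ and using bilinearity recovers $B(\phi(w),\phi(w')) = B(w,w') + B(X(w),X(w')) = B(w,w')$ directly, since $B$ kills the $\rad(V)$-valued terms, so listing $\phi \in \Sp(B_{V_{\mathcal{B}}},k)$ separately is for transparency rather than logical necessity.
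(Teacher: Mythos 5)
Your proof is correct and follows essentially the same route as the paper's: decompose $v = w + g$ along $V = V_{\mathcal{B}} \perp \rad(V)$, expand $\q$ using the vanishing of $B$ against the radical to kill all cross terms, and specialize components to isolate the three conditions. You are in fact more complete than the paper's proof, which asserts $\q(\delta(g)) = \q(g)$ rather than deriving it by setting $w = 0$, never verifies $\phi \in \Sp(B_{V_{\mathcal{B}}},k)$ in the forward direction (your polarization argument via $B(a,b) = \q(a+b) + \q(a) + \q(b)$ supplies this), and omits your correct closing observation that the symplectic condition on $\phi$ is already forced by $\q(\widetilde{X}(w)) = \q(\phi(w))$ together with nondegeneracy of $B$ on $V_{\mathcal{B}}$.
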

\begin{proof}
Let $w \in V_{\mathcal{B}}$ and $g \in \rad(V)$ and assume $(\phi, X, \delta) \in \Orth(\q,k)$.  Then we have
\begin{align*}
\q(w+g) &= \q((\phi,X,\delta)(w+g)) \\
&= \q(\phi(w) + Xw + \delta(g)) \\
&= \q(\phi(w)) + \q(Xw) + \q(\delta(g)). 
\end{align*}
Recall that $\q(w+g) = \q(w) + \q(g)$ and $\q(\delta(g)) = \q(g)$ to establish
\[ \q(w) + \q(g) = \q(\phi(w))  + \q(Xw) + \q(g).  \]
This is true since $\delta \in \Orth(\q_{\rad(V)},k)$ and $(\phi, X, \delta)$ must leave $\rad(V)$ invariant.  So setting
\[ \widetilde{X} =  \begin{bmatrix}
			\id & 0 \\
			X & \id
			\end{bmatrix}, \]
we have
\[ \q(w) + \q(Xw) = \q(\phi(w)) \Rightarrow \q(\widetilde{X}(w)) = \q(\phi(w)). \]

Now assuming that $\phi \in \Sp(B_{V_{\mathcal{B}}},k)$, $\delta \in \Orth(\q_{\rad(V)},k)$ and $\q(\widetilde{X}(w)) = \q(\phi(w))$ we can reverse the argument.
\end{proof}

The property in Proposition \ref{norm_cnd} is preserved under
composition as we now note. We can consider the product
\[ (\phi, X, \delta)(\phi', X', \delta') = (\phi \phi', X \phi' + \delta X', \delta \delta'). \]
We may also compute
\begin{align*}
\q((X \phi' + \delta X')(w)) &= \q(X \phi'(w)) + \q(\delta(X'w)) \\
&= \q(\phi\phi'(w)) + \q(\phi'(w)) + \q(X'w) \\
&=  \q(\phi\phi'(w)) + \q(\phi'(w))  + \q(\phi'(w)) + \q(w) \\
&= \q(\phi\phi'(w)) + \q(w),
\end{align*}
which is equivalent.

The purpose of the next result is to establish that any map of the form $(\tau_{\mathcal{U},a}, Y, \rho) \in \Orth(\q,k)$, where $\tau_{\mathcal{U},a}$ is a symplectic involution, can be written with an orthogonal involution in the first component.

\begin{prop} \label{qYw=0}
Every involution of the form $(\tau_{\mathcal{U},a},Y, \rho)$ can be written as
\[ (\tau_{\mathcal{U}'}, Y', \rho) = (\tau_{\mathcal{U}'}, 0, \id)(\id, Y', \id)(\id, 0, \rho), \]
where each of the three maps in the decomposition is in $\Orth(\q,k)$.
\end{prop}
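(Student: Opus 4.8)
The plan is to peel a genuine orthogonal involution off the first slot and absorb the resulting discrepancy into the defect of $\rad(V)$. Write the given involution as $T = (\tau_{\mathcal{U},a}, Y, \rho)$ with $\tau_{\mathcal{U},a} = \tau_{u_l,a_l}\cdots\tau_{u_1,a_1}$, where $\mathcal{U} = \{u_1,\dots,u_l\}$ are mutually orthogonal in $V_{\mathcal{B}}$ and $a_i \in k^*$; by Lemma \ref{gramschmidt_char2} I would choose dual vectors $v_1,\dots,v_l \in V_{\mathcal{B}}$ with $B(u_i,v_j) = \delta_{ij}$, so that each $Yv_i$ lies in $\rad(V)$. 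The idea is to find radical vectors $g_i$ for which the full-space orthogonal transvection $\tau_{u_i+g_i}$ has $V_{\mathcal{B}}$-component exactly $\tau_{u_i,a_i}$. A short computation shows $\tau_{u_i+g_i}(w) = w + a_i B(u_i,w)(u_i+g_i)$ for $w\in V_{\mathcal{B}}$ precisely when $\q(u_i+g_i) = 1/a_i$, i.e. when $\q(g_i) = 1/a_i + \q(u_i)$ (characteristic $2$).

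The crux is producing such $g_i$, and this is exactly where the hypothesis $T \in \Orth(\q,k)$ enters; I expect it to be the one genuine obstacle. By Proposition \ref{norm_cnd}, membership $T \in \Orth(\q,k)$ forces $\q(w + Yw) = \q(\tau_{\mathcal{U},a}(w))$ for all $w \in V_{\mathcal{B}}$. Expanding the right-hand side over the mutually orthogonal $u_i$ and using $B(w,Yw)=0$ (since $Yw \in \rad(V)$) yields $\q(Yw) = \sum_i a_i B(u_i,w)^2(a_i\q(u_i)+1)$, so in particular $\q(Yv_i) = a_i(a_i\q(u_i)+1)$. Setting $g_i := \frac{1}{a_i}Yv_i \in \rad(V)$ and scaling by the square $1/a_i^2$ gives $\q(g_i) = (a_i\q(u_i)+1)/a_i = \q(u_i) + 1/a_i$, exactly the norm required above. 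Hence each $u_i' := u_i + g_i$ satisfies $\q(u_i') = 1/a_i \neq 0$, the $u_i'$ remain mutually orthogonal (the $g_i$ lie in the radical), and $\Theta := \tau_{u_l'}\cdots\tau_{u_1'}$ is a genuine orthogonal involution on $V$ whose top-left symplectic block is precisely $\tau_{\mathcal{U},a}$ and which fixes $\rad(V)$ pointwise (because $B(u_i',g)=0$ for $g\in\rad(V)$).

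Finally I would peel $\Theta$ off. Put $P = (\id,0,\rho)$, which lies in $\Orth(\q,k)$ since $\rho$ is a radical involution, and set $\Upsilon := \Theta\,T\,P$; this is a product of three elements of $\Orth(\q,k)$ and so lies in $\Orth(\q,k)$. Using the composition rule $(\phi,X,\delta)(\phi',X',\delta') = (\phi\phi', X\phi'+\delta X', \delta\delta')$ together with the facts that $\Theta$ and $T$ share the top-left block $\tau_{\mathcal{U},a}$ and that $\tau_{\mathcal{U},a}^2 = \rho^2 = \id$, the top-left and bottom-right blocks of $\Upsilon$ are both $\id$, so $\Upsilon = (\id, Y', \id)$ for some $Y' : V_{\mathcal{B}} \to \rad(V)$. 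Because $\Upsilon \in \Orth(\q,k)$, Proposition \ref{norm_cnd} gives $\q(w+Y'w) = \q(w)$, i.e. $\q(Y'w) = 0$, so $Y'$ maps into the defect and $(\id,Y',\id)\in\Orth(\q,k)$. Since $\Theta^2 = P^2 = \id$, we get $\Theta\,\Upsilon\,P = \Theta(\Theta T P)P = T$, which is the asserted decomposition $(\tau_{\mathcal{U}'},0,\id)(\id,Y',\id)(\id,0,\rho)$ with all three factors in $\Orth(\q,k)$ and an orthogonal involution in the first slot. Everything after the norm-matching of the second paragraph is routine bookkeeping with the composition rule.
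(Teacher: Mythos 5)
Your proposal is correct and follows essentially the same route as the paper: the heart of both arguments is the same substitution $u_i' = u_i + \frac{1}{a_i}Yv_i$, justified by the same norm computations $\q(Yv_i) = a_i^2\q(u_i) + a_i$ and hence $\q(u_i') = \frac{1}{a_i}$, so that the orthogonal transvections $\tau_{u_i'}$ reproduce the symplectic part together with the radical shift on the $v_i$. Your final peeling step $\Upsilon = \Theta\, T\, P$ via the composition rule is merely a tidier bookkeeping of what the paper does by changing to the basis $\{u_i', v_i\}$ and checking the action on basis vectors directly, with the residual $Y'$ satisfying $\q(Y'w)=0$ in both treatments.
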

\begin{proof}
Assume that $a_i  \in k^*$ for all $i$ otherwise the corresponding factor would be trivial.  We can choose a basis such that $\q(Yw)=0$ for all $w \in V_{\mathcal{B}_{\tau_{\mathcal{U}'}}}$ by replacing $u_i$ with
\[ u_i' = u_i + \dfrac{1}{a_i} Yv_i. \]
To see that this works we first observe that 
\[ (\tau_{\mathcal{U}}, Y, \rho)(u_i) = u_i + Yu_i, \]
where $Yu_i \in \rad(V)$.  Then computing the norm of $u_i \in \mathcal{B}_{\tau_{\mathcal{U}}}$ we have
\begin{align*}
\q\left( (\tau_{\mathcal{U}}, Y, \rho)(u_i) \right) &= \q( u_i + Yu_i ) \\
\q(u_i) &= \q(u_i) + \q(Yu_i).
\end{align*}
Simplifying, we see that $\q(Yu_i)=0$.

There is a set vectors in the nonsingular completion of $\mathcal{U}$, which we will label $v_i$ such that $B(u_i,v_i) = 1$.  These vectors are not fixed by $\tau_{\mathcal{U}}$.  Computing the image of $v_i$ we have
\begin{align*}
(\tau_{\mathcal{U}}, Y, \rho)(v_i) &= v_i + a_i B(u_i,v_i)u_i + Yv_i \\
&= v_i + a_i u_i + Yv_i.
\end{align*}
We take the norm of the image of $v_i$
\begin{align*}
\q\left( (\tau_{\mathcal{U}}, Y, \rho)(v_i) \right) &= \q( v_i + a_i u_i + Yv_i ) \\
\q(v_i) &= \q(v_i + a_iu_i) + \q(Yv_i) \\
&= \q(v_i) + a_i^2\q(u_i) + B(v_i, a_iu_i) + \q(Yv_i) \\
&= \q(v_i) + a_i^2\q(u_i) + a_i + \q(Yv_i).
\end{align*}
We can solve for $\q(Yv_i)$ and see that
\[ \q(Yv_i) = a_i^2 \q(u_i) + a_i. \]
Notice here that $\q(Yv_i)=0$ only if $a_i=0$ or $\q(u_i) = 1/a_i$.  We have assumed $a_i \neq 0$ and if $\q(u_i) = 1/a_i$, $\tau_{u_i,a_i}$ is already an orthogonal transvection.  Let us compute the norm of $u_i' = u_i + \dfrac{1}{a_i} Yv_i$,
\begin{align*}
\q\left( u_i + \dfrac{1}{a_i} Yv_i \right) &= \q(u_i) + \dfrac{1}{a_i^2} \q(Yv_i) \\
&= \q(u_i) + \dfrac{1}{a_i^2} \left( a_i^2 \q(u_i) + a_i \right) \\
&= \q(u_i) + \q(u_i) + \dfrac{1}{a_i} \\
&= \dfrac{1}{a_i}.
\end{align*}

Now we can verify that $\tau_{u_i,a_i} = \tau_{u_i'}$ for all $i$, which is enough to say that $\tau_{\mathcal{U},a} = \tau_{\mathcal{U}'}$.  First notice that 
\[ B(u_i,u_i') = B\left(u_i, u_i + \dfrac{1}{a_i} Yv_i \right) = 0, \]
 which tells us that $\tau_{\mathcal{U}'}$ fixes $\mathcal{U}$.  Next we compute the image of $v_i$ for all $i$ and see that
 \begin{align*}
 \tau_{u_i'}(v_i) &= v_i + \dfrac{B\left( u_i + \dfrac{1}{a_i}Yv_i, v_i \right)}{\q\left(u_i + \dfrac{1}{a_i} Yv_i \right)}(u_i + \dfrac{1}{a_i} Yv_i) \\
 &= v_i + a_i \left( u_i + \dfrac{1}{a_i} Yv_i \right) \\
 &= v_i + a_iu_i + Yv_i.
 \end{align*}
 The map $Y'$ acts on $V$ by adding defective vectors to the $u_i$ and acting as the zero map on the $v_i$. So we have that $\q(Yw) = 0$ for all $w \in V$.  In the end we have that $(\tau_{\mathcal{U}'}, 0, \id) \in \Orth(\q,k)$ since $\tau_{\mathcal{U'}}$ is an orthogonal transvection involution.  The map $(\id, Y', \id) \in \Orth(\q,k)$, since $Y'$ can only add defective vectors to any element and so must preserve $\q$.  Finally  $(\id, 0, \rho) \in \Orth(\q,k)$ since it acts isometrically on the radical and trivially elsewhere.
\end{proof}

Now we can prove the following theorem.

\begin{thm} \label{geninv_character}
Two involutions $(\tau_{\mathcal{U},a},Y, \rho), (\tau_{\mathcal{X},b},Z, \gamma) \in \Orth(\q,k)$ are $\Orth(\q,k)$-conjugate if and only if there exists $(\varphi, X, \delta) \in \Orth(\q,k)$ such that
% maps $\varphi : V_{\mathcal{U}} \to V_{\mathcal{U}}$, $X: V_{\mathcal{U}} \to \rad(V)$ and $\delta: \rad(V) \to \rad(V)$ such that 
\begin{enumerate}
\item $\varphi \tau_{\mathcal{U},a} \varphi^{-1}= \tau_{\mathcal{X},b}$
\item $\delta \rho \delta^{-1} = \gamma$
\item $X\tau_{\mathcal{U},a} + \gamma X = Z \varphi + \delta Y$.
\end{enumerate}
\end{thm}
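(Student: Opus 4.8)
The plan is to compute the conjugate $(\varphi, X, \delta)(\tau_{\mathcal{U},a}, Y, \rho)(\varphi, X, \delta)^{-1}$ directly in the block form from \eqref{gen_map}, using the composition rule $(\phi, X, \delta)(\phi', X', \delta') = (\phi \phi', X \phi' + \delta X', \delta \delta')$, and then read the three stated conditions off the three entries of the resulting triple. Since $\Orth(\q,k)$-conjugacy of two such involutions means precisely that some $(\varphi, X, \delta) \in \Orth(\q,k)$ realizes this equation, matching entries establishes both directions of the equivalence simultaneously.

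First I would record the inverse. Solving $(\varphi, X, \delta)(\varphi', X', \delta') = (\id, 0, \id)$ entrywise forces $\varphi' = \varphi^{-1}$ and $\delta' = \delta^{-1}$, together with $X \varphi^{-1} + \delta X' = 0$; since $\mathrm{char}(k) = 2$ this gives
\[ (\varphi, X, \delta)^{-1} = (\varphi^{-1},\ \delta^{-1} X \varphi^{-1},\ \delta^{-1}). \]
Next I would expand the conjugate in two steps. The left product is $(\varphi \tau_{\mathcal{U},a},\ X \tau_{\mathcal{U},a} + \delta Y,\ \delta \rho)$, and multiplying this on the right by the inverse above yields the triple
\[ \left( \varphi \tau_{\mathcal{U},a} \varphi^{-1},\ (X \tau_{\mathcal{U},a} + \delta Y)\varphi^{-1} + \delta \rho \delta^{-1} X \varphi^{-1},\ \delta \rho \delta^{-1} \right). \]
Setting this equal to $(\tau_{\mathcal{X},b}, Z, \gamma)$ gives conditions (1) and (2) immediately from the first and third entries.

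Finally, the second entry reads $(X \tau_{\mathcal{U},a} + \delta Y)\varphi^{-1} + \delta \rho \delta^{-1} X \varphi^{-1} = Z$. Right-multiplying by $\varphi$ and substituting $\gamma = \delta \rho \delta^{-1}$ from condition (2) gives $X \tau_{\mathcal{U},a} + \delta Y + \gamma X = Z \varphi$, and rearranging (signs again vanishing in characteristic $2$) produces condition (3), namely $X \tau_{\mathcal{U},a} + \gamma X = Z \varphi + \delta Y$. Each manipulation is reversible, so the three conditions are jointly equivalent to the conjugation equation.

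I expect no genuine obstacle here, as the argument is a direct matrix computation rather than a conceptual one. The only places needing care are forming the inverse correctly and then carrying the off-diagonal (second) component cleanly through both multiplications; characteristic $2$ conveniently removes all sign bookkeeping, so the symmetric final form of condition (3) drops out without extra work.
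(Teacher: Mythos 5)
Your proposal is correct and follows essentially the same route as the paper: both compute the inverse as $(\varphi^{-1},\ \delta^{-1}X\varphi^{-1},\ \delta^{-1})$, expand the conjugate in block form to obtain $\bigl(\varphi\tau_{\mathcal{U},a}\varphi^{-1},\ (X\tau_{\mathcal{U},a}+\delta Y)\varphi^{-1}+\delta\rho\delta^{-1}X\varphi^{-1},\ \delta\rho\delta^{-1}\bigr)$, and read off conditions (1)--(3) by matching entries and right-multiplying by $\varphi$. The only difference is notational (your triple-composition rule versus the paper's explicit $2\times 2$ block matrices), which changes nothing of substance.
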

\begin{proof}
We can consider the elements of $\Orth(\q,k)$ as block diagonal matrices and compute
\begin{align*}
	\begin{bmatrix}
	\varphi & 0 \\
	X & \delta
	\end{bmatrix}
	\begin{bmatrix}
	\tau_{\mathcal{U},a} & 0 \\
	Y & \rho
	\end{bmatrix}
	\begin{bmatrix}
	\varphi & 0 \\
	X & \delta
	\end{bmatrix}^{-1} &=
	\begin{bmatrix}
	\varphi & 0 \\
	X & \delta
	\end{bmatrix}
	\begin{bmatrix}
	\tau_{\mathcal{U},a} & 0 \\
	Y & \rho
	\end{bmatrix}
	\begin{bmatrix}
	\varphi^{-1} & 0 \\
	\delta^{-1}X \varphi^{-1} & \delta^{-1}
	\end{bmatrix} \\
&=	\begin{bmatrix}
	\varphi \tau_{\mathcal{U},a} \varphi^{-1} & 0 \\
	(X\tau_{\mathcal{U},a} + \delta Y) \varphi^{-1} + \delta \rho \delta^{-1} X \varphi^{-1} & \delta \rho \delta^{-1}
	\end{bmatrix}	.
\end{align*}

The first two equations from the statement of the Proposition can be identified by setting the upper left and lower right diagonal equal to the corresponding block in $(\tau_{\mathcal{X},b}, Z, \gamma)$.  To get the final equation notice that the lower left block off the diagonal in the computation contains $\delta \rho \delta^{-1}$ which must be $\gamma$ by equation 2.  We then have the following equation
\[ (X\tau_{\mathcal{U},a} + \delta Y) \varphi^{-1} +\gamma X \varphi^{-1} = Z. \]
Multiplying $\varphi$ and then adding $\delta Y$ to both sides of the equation we arrive at
\[ X\tau_{\mathcal{U},a} +\gamma X  = Z \varphi + \delta Y. \]
\end{proof}

Notice that in Theorem \ref{geninv_character} property $1$ is equivalent to $(\mathcal{U},a)$ and $(\mathcal{X},b)$ being involution compatible, and property $2$ is equivalent to $\rho$ and $\gamma$ having equivalent quadratic signatures.

In general the existence of a triple $(\varphi, X, \delta)$ depends greatly on $\q$ and $k$.   We can consider the case when $\q$ is anisotropic when restricted to $\rad(V)$.  In this case if $(\tau_{\mathcal{U}}, Y, \rho)$ is an orthogonal involution then $\rho = \id$ and $Y=0$, since for any basis of $\rad(V)$ each basis vector will have a unique nonzero norm.  The other extreme would be if $\rad(V)$ is totally isotropic, so that every vector in $\rad(V)$ has norm zero.  In this case $\rho \in \textrm{GL}_s(k)$ where $s = \textrm{dim}(\rad(V))$ and $Y \in \textrm{Mat}_{r,s}(k)$, since there are no constraints contributed by $\q$ on $\rad(V)$ and adding vectors from the radical leaves $\q$ invariant on the image of any nonsingular subspace of $V$.

\vfill

\bibliographystyle{plain}

\begin{thebibliography}{10}

\bibitem{as76}
M.~Aschbacher and G.M.~Seitz.
\newblock Involutions in {C}hevalley groups over fields of even order.
\newblock {\em Nagoya Math J.}, 63:1--91, 1976.

\bibitem{he02}
A.G.~Helminck and L.~Wu .
\newblock Classification of involutions of {S}{L}$(2,k)$.
\newblock {\em Comm. in Alg.}, 30(1):193--203, 2002.

\bibitem{bhw15}
R.W.~Benim , F.~Jackson~Ward and A.G.~Helminck.
\newblock Isomorphy classes of involutions of $\mathrm{Sp}(2n,k)$, $n>2$.
\newblock {\em Journal of Lie Theory}, 25(4):903--948, 2015.

\bibitem{bdhw16}
R.W.~Benim, C.E.~Dometrius, A.G.~Helminck and L.~Wu.
\newblock Isomorphy classes of involutions of so (n, k, $\beta$), $n>2$.
\newblock {\em Journal of Lie Theory}, 26(2):383--438, 2016.

\bibitem{be57}
M.~Berger.
\newblock Les espaces sym{\'e}triques noncompacts.
\newblock In {\em Annales scientifiques de l'{\'E}cole Normale Sup{\'e}rieure},
  74: 85--177, 1957.

\bibitem{do06}
C.E.~Dometrius, A.G.~Helminck and L.~Wu .
\newblock Involutions of {S}{L}$(n,k)$, $(n>2)$.
\newblock {\em App. Appl. Math.}, 90(1):91--119, 2006.

\bibitem{co73}
E.A.~Connors.
\newblock Automorphisms of {O}rthogonal groups in characteristic 2.
\newblock {\em J. Number Theory}, 5(6):477--501, 1973.

\bibitem{co74}
E.A.~Connors.
\newblock Automorphisms of the {O}rthogonal group of a defective space.
\newblock {\em Journal of Algebra}, 29(1):113--123, 1974.

\bibitem{co75}
E.A.~Connors.
\newblock The structure of $\mathrm{O}'({V})/\mathrm{D}\mathrm{O}({V})$ in the defective case.
\newblock {\em Journal of Algebra}, 34(1):74--83, 1975.

\bibitem{co76}
E.A.~Connors.
\newblock Automorphisms of {O}rthogonal groups in characteristic 2, {II}.
\newblock {\em American Journal of Mathematics}, 98(3):611--617, 1976.

\bibitem{ga39}
F.~Gantmacher.
\newblock On the classification of real simple {L}ie groups.
\newblock {\em Rec. Math. N.S.}, 5:217--249, 1939.

\bibitem{gr02}
L.~Grove.
\newblock {\em Classical Groups and Geometric Algebra}.
\newblock Graduate Studies in Mathematics vol. 39. American Mathematical Society, Providence, 2002.

\bibitem{ch66}
X.C.~Hao.
\newblock On the automorphisms of {O}rthogonal groups over perfect fields of
  characteristic 2.
\newblock {\em Acta Mathematica Sinica}, 16(4):453--502, 1966.

\bibitem{he00}
A.G.~Helminck.
\newblock On the classification of $k$-involutions.
\newblock {\em Adv. in Math.}, 153(1):1--117, 1988.

\bibitem{hl04}
D.W.~Hoffmann and A.~Laghribi.
\newblock Quadratic forms and {P}fister neighbors in characteristic 2.
\newblock {\em Trans. Amer. Math. Soc.}, 356(10):4019--4053, 2004.

\bibitem{hu14}
J.~Hutchens.
\newblock Isomorphy classes of $k$-involutions of $\mathrm{G}_2$.
\newblock {\em J. of Alg. and its Applications}, 13(7):1--16, 2014.

\bibitem{hu15}
J.~Hutchens.
\newblock Isomorphism classes of $k$-involutions of $\mathrm{F}_4$.
\newblock {\em Journal of Lie Theory}, 25(4):1--19, 2015.

\bibitem{hu16}
J.~Hutchens.
\newblock Isomorphism classes of k-involutions of algebraic groups of type
  $\mathrm{E}_6$.
\newblock {\em Beitr{\"a}ge zur Algebra und Geometrie/Contributions to Algebra
  and Geometry}, 57(3):525--552, 2016.

\bibitem{hs18}
J.~Hutchens and N.~Schwartz.
\newblock Involutions of type $\mathrm{G}_2$ over fields of characteristic two.
\newblock {\em Algebras and Representation Theory}, 21(3):487--510, 2018.

\bibitem{om78}
O.T.~O'Meara.
\newblock {\em Symplectic groups}.    
\newblock Mathematical Surveys, American Mathematical Society, Providence, Rhode Island, 1978.


\bibitem{po70}
B.~Pollak.
\newblock Orthogonal groups over global fields of characteristic 2.
\newblock {\em Journal of Algebra}, 15(4):589--595, 1970.

\bibitem{sc18}
N.~Schwartz.
\newblock k-involutions of $\mathrm{SL}(n, k)$ over fields of characteristic 2.
\newblock {\em Communications in Algebra}, 46(5):1912--1925, 2018.

\bibitem{sn89}
E.~Snapper and R.J.~Troyer.
\newblock {\em Metric affine geometry}.
\newblock Dover Books on Advanced Mathematics. Dover Publications, Inc., New
  York, second edition, 1989.

\bibitem{wi78}
S.A.~Wiitala.
\newblock Factorization of involutions in characteristic two {O}rthogonal groups:
  an application of the {J}ordan form to group theory.
\newblock {\em Linear Algebra and its Applications}, 21(1):59--64, 1978.

\end{thebibliography}

\end{document}